\theoremstyle{definition}
\newtheorem{mydef}{Definition}[section]
\newtheorem{myque}[mydef]{Question}
\newtheorem*{myack}{Acknowledgements}
\theoremstyle{remark}
\newtheorem{myex}[mydef]{Example}
\theoremstyle{plain}
\newtheorem{mycol}[mydef]{Corollary}
\newtheorem{mysen}[mydef]{Theorem}
\newtheorem{mylem}[mydef]{Lemma}
\newtheorem{myfact}[mydef]{Fact}
\newtheorem*{myclaim}{Claim}
\newtheorem{mysenx}{Theorem}
\numberwithin{mydef}{section}
\DeclareMathOperator{\dom}{dom}
\DeclareMathOperator{\cf}{cf}
\DeclareMathOperator{\Add}{Add}
\DeclareMathOperator{\ISP}{ISP}
\DeclareMathOperator{\AP}{AP}
\DeclareMathOperator{\PFA}{PFA}
\DeclareMathOperator{\Coll}{Coll}
\DeclareMathOperator{\Odd}{Odd}
\DeclareMathOperator{\op}{op}
\DeclareMathOperator{\TP}{TP}
\DeclareMathOperator{\ITP}{ITP}
\DeclareMathOperator{\tcl}{tcl}
\newcommand{\dL}{\mathbb{L}}
\newcommand{\dM}{\mathbb{M}}
\newcommand{\dP}{\mathbb{P}}
\newcommand{\dQ}{\mathbb{Q}}
\newcommand{\dR}{\mathbb{R}}
\newcommand{\dT}{\mathbb{T}}
\newcommand{\uhr}{\upharpoonright}
\subjclass[2020]{03E05,03E35,03E55} 
\title{Slender trees and the approximation property}
\author{Hannes Jakob}
\address{Universit\"at Freiburg\\Institut f\"ur Mathematik\\Ernst-Zermelo-Stra{\ss}e 1, 79104 Freiburg im Breisgau, Germany\\ORCiD:\orcidlink{https://orcid.org/0009-0006-4563-1369}}
\email{hannes.jakob@mathematik.uni-freiburg.de}
\date{\today}
\begin{document}
	
	
	\baselineskip=17pt
	\keywords{Slender trees, large cardinals, forcing, independence results} 
	
	
	\begin{abstract}
		We obtain a relatively simple criterion for when a forcing has the ${<}\,\delta$-approximation property, generalizing a result of Unger. Afterwards we apply this criterion to construct variants of Mitchell Forcing in order to answer questions posed by Mohammadpour.
	\end{abstract}
	
	\maketitle
	
	\section{Introduction}
	
	In his PhD thesis, Christoph Weiss introduced a two-cardinal generalisation of a $\kappa$-tree called a $(\kappa,\lambda)$-list along with two ``thinness`` properties, namely \emph{thin} and \emph{slender}. The corresponding cardinal properties, stating that every ``not too wide`` list has an ineffable branch, behave very differently in practice. The \emph{ineffable tree property}, denoted $\ITP$, is consistent with e.g. the approachability property and forced by many orders which were originally conceived with the tree property in mind. On the other hand, the \emph{ineffable slender tree property}, denoted $\ISP$, forces that $\AP$ fails and needs different preservation theorems compared to the tree property.
	
	The proof structures for consistency proofs involving $\ISP$ and $\ITP$ are the same: Given a list in the extension $V[G]$, lift some ground-model embedding $V\to M$ to an embedding $V[G]\to M[H]$ inside $V[H]$, thus obtaining an ineffable branch in $V[H]$ and ``pull it back`` into $V[G]$ using the thinness assumption and properties of the extension $V[H]$ over $V[G]$. For thin lists, this is done using two branch lemmas stating that sufficiently closed forcings and forcings whose squares have small antichains do not add cofinal branches to thin lists. For $\delta$-slender lists however, we need that the pair $(V[G], V[H])$ has the so-called ${<}\,\delta$-approximation property. In practice, this property is much harder to establish and so many results which have been shown for $\TP$ and $\ITP$ have not yet been established for $\ISP$. E.g. it was shown by Cummings and Foreman in \cite{CummingsForemanTreeProperty} that the tree property can consistently hold at every $\aleph_{n+2}$, which was improved by Fontanella in \cite{FontanellaTPInterval} to show that $\ITP$ can consistently hold at every $\aleph_{n+2}$. On the other hand the only comparable result for $\ISP$ was obtained by Mohammadpour and Velickovic in \cite{MohamVelicGuessingModelsApproach} who showed using side condition forcing that consistently $\ISP(\omega_2)$ and $\ISP(\omega_3)$ can hold simultaneously.
	
	In this paper, we obtain two important tools to prove consistency results involving $\ISP$: We obtain a general criterion for when a forcing order preserves the statement $\ISP$ and, more importantly, prove the following statement concerning the approximation property, generalizing a result of Unger from \cite{UngerFragIndestructII}:
	
	\begin{mysenx}
		Let $(\dP\times\dQ,R)$ be an iteration-like partial order and $\delta$ a cardinal. Assume that the base ordering $(\dP,b(R))$ is square-$\delta$-cc. and the term ordering $(\dP\times\dQ,t(R))$ is strongly ${<}\,\delta$-distributive. Then $(\dP\times\dQ,R)$ has the ${<}\,\delta$-approximation property.
	\end{mysenx}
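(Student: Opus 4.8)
The plan is to argue by contradiction, following the template of the Mitchell--Hamkins--Unger approximation arguments but driven entirely by the abstract hypotheses. Suppose the $<\delta$-approximation property fails. Then there are a condition $r_0=(p_0,q_0)$ and an $R$-name $\dot A$ for a subset of an ordinal $\gamma\ge\delta$ such that $r_0\Vdash\dot A\notin V$ while $r_0$ forces $\dot A\cap z\in V$ for every $z\in([\gamma]^{<\delta})^V$. The first step records the usual \emph{freshness} consequence: no $r\le_R r_0$ decides $\dot A$ completely (else $\dot A$ would be computed in $V$ below $r$), so for every $r\le_R r_0$ there are $r',r''\le_R r$ and an ordinal $\beta<\gamma$ with $r'\Vdash\beta\in\dot A$ and $r''\Vdash\beta\notin\dot A$; any two conditions deciding a common $\beta$ oppositely are $R$-incompatible. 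Viewing the $V$-approximations of $\dot A$ as a tree ordered by inclusion, $\dot A$ is exactly a new cofinal branch, so the goal is a branch/antichain argument in the style indicated in the introduction.

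The second step uses the iteration-like factoring to replace $R$-splitting by splitting that is visible in the base. A defining feature of an iteration-like order is that $t(R)\subseteq R$ and the identity map is a projection $(\dP\times\dQ,t(R))\to(\dP\times\dQ,R)$; hence any $R$-extension of a condition can be refined to a $t(R)$-extension still lying below it in $R$. Using this together with a winning strategy for the distributivity player witnessing that $(\dP\times\dQ,t(R))$ is strongly $<\delta$-distributive, I build a splitting tree $\langle r_s:s\in{}^{<\delta}2\rangle$ with $r_\emptyset\le_{t(R)}r_0$, term-decreasing along each branch, together with splitting ordinals $\beta_s<\gamma$ such that $r_{s^\frown 0}\Vdash\beta_s\notin\dot A$ and $r_{s^\frown 1}\Vdash\beta_s\in\dot A$. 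Two features are arranged simultaneously: at each limit level $\lambda<\delta$ the strategy supplies a term lower bound $r_s\le_{t(R)}r_{s\uhr i}$ for all $i<\lambda$, so every node genuinely exists; and at each splitting node the two successors are taken to share a common term (i.e.\ $\dQ$-) coordinate, differing only in the base coordinate. The latter is possible precisely because, having first descended in the strongly distributive term order to stabilize the $\dQ$-side, the remaining undecided bits of $\dot A$ can be separated within $\dP$ alone.

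The third step extracts the contradiction from square-$\delta$-cc of the base. Write $p_s=b(r_s)\in\dP$. Since $r_{s^\frown 0}$ and $r_{s^\frown 1}$ share a term part and decide $\beta_s$ oppositely, their $R$-incompatibility is carried entirely by the base, so $p_{s^\frown 0}\perp_{b(R)}p_{s^\frown 1}$. A standard branch-lemma computation now shows that for incomparable nodes $s,t$ the pairs $(p_{s^\frown 0},p_{s^\frown 1})$ and $(p_{t^\frown 0},p_{t^\frown 1})$ are incompatible in $(\dP\times\dP,b(R))$: a common lower bound in the square would yield a single condition forcing $\dot A$ to split two incompatible ways along the tree. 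Choosing $\delta$ pairwise incomparable nodes of the height-$\delta$ tree then produces an antichain of size $\delta$ in the base square, contradicting the assumption that $(\dP,b(R))$ is square-$\delta$-cc. This completes the argument.

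I expect the genuine difficulty to lie in fusing the second and third steps, that is, in making the recorded $R$-incompatibilities into \emph{base} incompatibilities. A priori two conditions can be $R$-incompatible solely through their $\dQ$-coordinates, in which case projecting to $\dP$ loses the antichain entirely; this is exactly why a single descending sequence in the term order must be maintained across all $\delta$ levels, so that the term coordinate is frozen and the splitting is forced into the base. Strong $<\delta$-distributivity is what allows this descending thread to survive the $\delta$-many limit stages without obstruction, while square-$\delta$-cc is what bounds the horizontal width of the resulting tree; the content of the theorem is that these two hypotheses are tuned to cover the vertical and horizontal directions of the construction simultaneously. The bookkeeping needed to guarantee both that splits remain available after stabilizing the term side and that enough pairwise incomparable nodes appear is the technical heart of the proof.
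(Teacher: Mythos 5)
Your high-level shape (freshness, splitting pushed into the base via the common-term trick, antichain in the square) matches the paper's, but the construction in your Step 2 has a fatal gap: you invoke ``a winning strategy for the distributivity player witnessing that $(\dP\times\dQ,t(R))$ is strongly ${<}\,\delta$-distributive.'' No such strategy exists. By the game characterization quoted in the paper from \cite{JakobDisjointInterval}, strong ${<}\,\delta$-distributivity is equivalent to the assertion that INC has \emph{no} winning strategy in the completeness game of length $\delta$; it does not give COM a winning strategy --- that would be $\delta$-strategic closure, and the paper stresses that there is no provable implication between the two notions. So at limit levels of your tree nothing ``supplies a term lower bound,'' and the construction cannot proceed. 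The paper's proof of Theorem~\ref{ApproxProp} is engineered around exactly this obstacle: rather than building any object of length $\delta$ by hand, it defines a \emph{strategy for INC} on the section ordering $(\dQ,s(R,p))\uhr q$ whose odd moves record splitting pairs $(p_\gamma^0,p_\gamma^1)$ in the base (these come from Lemma~\ref{DecisionByP}, which is where basedness, projection, refinement and mixing are consumed), and then uses strong distributivity contrapositively: since that strategy cannot be winning, some run survives all $\delta$ stages, and the pairs recorded along that single run already form a $\delta$-sized antichain in $(\dP,b(R))^2$.

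There is a second, independent failure at limits: your splits stack, i.e.\ $r_{s^\frown j}\le_R r_s$, so along a branch the base coordinates form a $b(R)$-decreasing sequence, and a limit node would also need a lower bound for that base sequence. But the base ordering is only assumed square-$\delta$-cc --- in the intended applications it is Cohen forcing $\Add(\tau,\nu)$ --- and such orders have strictly decreasing $\omega$-sequences with no lower bound at all. This is why the paper's construction never descends in the base: every splitting pair is chosen $b(R)$-below the one fixed root $p$, only the term coordinate descends, and incompatibility of two pairs $(p_\gamma^0,p_\gamma^1)$, $(p_{\gamma'}^0,p_{\gamma'}^1)$ in the square is proved not through a tree meet but through the refinement property, using that the stage-$\gamma$ pair decides $\dot f\uhr\check{y}_{\gamma'}$ equally while the stage-$\gamma'$ pair decides it differently. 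As a sanity check that no branching-tree argument of your kind can work: if you really could freeze the term coordinate uniformly across a full binary tree, then basedness plus refinement would make the bases at incomparable nodes incompatible in $\dP$ itself, so plain $\delta$-cc of the base would suffice --- and that strengthening of the theorem is false, as a Suslin tree (ccc, ${<}\,\omega_1$-distributive, adds a fresh branch) with trivial term part shows. I would rework your Step 2 as follows: fix the root $p$, play the completeness game on $(\dQ,s(R,p))\uhr q$, and let INC's moves perform the splitting in the base.
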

	
	We then demonstrate the robustness of the framework by constructing several new variants of Mitchell forcing in order to answer questions posed by Mohammadpour in a recent survey (see \cite{MohammadpourRoadCompactness}). Recall that a cardinal $\kappa$ is \emph{$\lambda$-ineffable} if $\kappa$ is inaccessible and $\ISP(\kappa,\lambda)$ holds.
	
	\begin{mysenx}
		Let $\tau<\mu<\kappa\leq\lambda$ be regular cardinals such that $\tau^{<\tau}=\tau$ and $\kappa$ is $\lambda$-ineffable. There exist forcing extensions satisfying the following (with $\kappa=\mu^+$):
		\begin{enumerate}
			\item $\ISP(\tau^+,\kappa,\lambda)$ holds, $\ISP(\tau,\kappa,\lambda)$ fails and $2^{\tau}$ is an arbitrarily large cardinal below $\lambda$.
			\item $\ISP(\kappa,\kappa,\lambda)$ holds, $\ISP(\mu,\kappa,\kappa)$ fails and $2^{\tau}\neq 2^{\mu}$.
		\end{enumerate}
		If $\kappa$ is supercompact (i.e. $\lambda$-ineffable for all $\lambda\geq\kappa$), there exists a forcing extension such that:
		\begin{enumerate}
			\setcounter{enumi}{2}
			\item $\ISP(\tau^+,\kappa,\geq\kappa)$ holds and is indestructible under ${<}\,\kappa$-directed closed forcing.
		\end{enumerate}
	\end{mysenx}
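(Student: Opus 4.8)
The plan is to prove each clause with a purpose-built variant of Mitchell forcing $\dM$ that collapses $\kappa$ to become $\mu^+$ while adding a prescribed number of subsets of $\tau$ and of $\mu$, arranged as an iteration-like order $(\dP\times\dQ,R)$ so that our approximation-property criterion applies directly. For every \emph{holds}-clause the scheme is the one sketched in the introduction: force with $\dM$ over $V$ to reach $V[G]$, fix an embedding $j\colon V\to N$ with $\crit(j)=\kappa$ witnessing the $\lambda$-ineffability of $\kappa$, factor $j(\dM)=\dM*\dM_{\mathrm{tail}}$ at $\kappa$, and lift to $j\colon V[G]\to N[H]$ with $G\subseteq H$ inside $V[H]$. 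Ineffability hands us, for any slender list in $V[G]$, an ineffable branch living in $V[H]$; to pull it back I show that the tail forcing $\dM_{\mathrm{tail}}$, construed over $V[G]$, is again iteration-like with a square-$\delta$-cc base ordering and a strongly ${<}\,\delta$-distributive term ordering, so that our criterion delivers the ${<}\,\delta$-approximation property for the pair $(V[G],V[H])$, with $\delta$ the target slenderness parameter ($\tau^+$ in (1) and (3), $\kappa$ in (2)). Feeding this into our $\ISP$-preservation criterion yields $\ISP(\delta,\kappa,\lambda)$.

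For the \emph{failure}-clauses I use that the same forcing is deliberately built to lack the ${<}\,\delta'$-approximation property at the smaller parameter $\delta'$ ($\tau$ in (1), $\mu$ in (2)): the $\Add(\tau,\theta)$-block (resp. the $\Add(\mu,\cdot)$-block), being ${<}\,\delta'$-closed, adds a subset of $\tau$ (resp. $\mu$) all of whose ${<}\,\delta'$-sized pieces already lie in the ground model yet which is itself new. Crucially this witness breaks ${<}\,\delta'$-approximation but is harmless to ${<}\,(\delta')^+$-approximation, which is what keeps the two directions compatible. From such a non-approximated set I manufacture a $\delta'$-slender $(\kappa,\lambda)$-list no branch of which is guessed on a stationary set---intuitively the $\theta$ generic ``columns'' are pairwise distinct already on pieces of size $\delta'$, so each defeats every candidate ineffable branch---whence $\neg\ISP(\delta',\kappa,\lambda)$. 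The cardinal arithmetic is read off the Cohen blocks: taking $\theta<\lambda$ of cofinality ${\geq}\,\tau^+$ makes $2^\tau=\theta$ an arbitrarily large cardinal below $\lambda$ for (1), while choosing the $\tau$- and $\mu$-blocks of different lengths gives $2^\tau\neq2^\mu$ for (2); that the collapses and the term part leave these values intact is governed by the very chain-condition and distributivity bookkeeping used for the approximation property.

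For (3) I first run a Laver preparation below $\kappa$ so that the supercompactness of $\kappa$ becomes indestructible under ${<}\,\kappa$-directed closed forcing, and only afterwards force with the $\tau^+$-variant $\dM$. Given an arbitrary ${<}\,\kappa$-directed closed $\dR\in V[G]$ with generic $R$, I re-derive $\ISP(\tau^+,\kappa,\lambda)$ in $V[G][R]$ for every $\lambda\geq\kappa$: the indestructible supercompactness furnishes, in $V[G][R]$, an embedding $j$ with $\crit(j)=\kappa$, and the directed closure of $\dR$ is exactly what allows the construction of a master condition lifting $j$ past $\dR$ as well as past $\dM$. The tail of $j(\dM)$ over $V[G][R]$ is once more iteration-like, so our criterion supplies the ${<}\,\tau^+$-approximation property for the relevant pair and the branch descends as before; the parameter $\tau^+$ is stable because $\tau^+<\kappa\leq\lambda$ and $\dR$ is too closed to add subsets of sets of size $\tau$.

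The main obstacle is reconciling, within a \emph{single} forcing, the holds- and failure-directions at adjacent parameters. I must engineer $\dM$ so that its term ordering is strongly ${<}\,\tau^+$-distributive---this is what our criterion consumes to produce the ${<}\,\tau^+$-approximation property---while the $\Add(\tau,\theta)$-block simultaneously and provably wrecks ${<}\,\tau$-approximation; verifying this strong distributivity for the mixed-support variants, with the extra large Cohen blocks present and $2^\tau,2^\mu$ inflated, is the most delicate computation, since those blocks are precisely what threaten distributivity. The second hard point is turning the bare failure of ${<}\,\delta'$-approximation into an \emph{explicit} $\delta'$-slender list with no ineffable branch and checking that this list is $\delta'$-slender but not $\delta$-slender (so that it cannot contradict the holds-clause); for (3) the analogous difficulty is confirming that the tail of $j(\dM)$ keeps its iteration-like form after the arbitrary directed-closed $\dR$ has been absorbed via the Laver function.
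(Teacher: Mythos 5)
Your plan for clauses (1) and (2) follows the same architecture as the paper in its essentials (Mitchell variants with Cohen blocks, the iteration-like approximation criterion applied to a quotient/tail, generic Cohen columns for the failure clauses), but clause (3) rests on a step that is simply false. After forcing with $\dM$, the cardinal $\kappa$ \emph{is} $\mu^+$: $\dM$ is only ${<}\,\tau$-directed closed and collapses every cardinal in $(\mu,\kappa)$, so it is not among the ${<}\,\kappa$-directed closed forcings covered by the Laver preparation's indestructibility, and in $V[G][R]$ there is no supercompactness of $\kappa$ left to "furnish an embedding $j$ with $\crit(j)=\kappa$." Nor can the preparation's Laver function absorb the later forcing: a Laver (or lottery) preparation only forces with ${<}\,\gamma$-directed closed posets at its stages, and $\dM*\dot{\dR}$ is not directed closed, so $j(\mathrm{prep})$ never factors as $\mathrm{prep}*(\dM*\dot{\dR})*\cdots$ and no master-condition argument gets off the ground. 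The paper's fix (following Unger) is to build the guessing \emph{into} the Mitchell forcing: $\dM_3^l$ carries a third coordinate which, at inaccessible stages $\gamma$ where $l(\gamma)$ names a ${<}\,\gamma$-directed closed poset, names a condition of $l(\gamma)$. That coordinate is precisely what makes the collapsed forcing $\pi(\dM(\kappa)*\dot{\dL})$ sit inside the initial segment $\dM(\nu+1)$ of $\dM(\kappa)$ itself (for witnesses $M$ with $l(M\cap\kappa)=\pi(\dot{\dL})$), after which directedness of $\dot{\dL}^G$ yields a lower bound of $\pi^{-1}[H_0']\cup\{\sigma^G\}$ serving as master condition. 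Without this absorption, genericity cannot be transferred and your lifting scheme for (3) cannot be repaired by reordering the preparation.

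Two further gaps. First, in (2) you apply the criterion directly with $\delta=\kappa$, which requires a strongly ${<}\,\kappa$-distributive term ordering; but however the tail is split, the $\Coll(\mu,\cdot)$-coordinates must lie in the term part, and they are only ${<}\,\mu$-closed — indeed they collapse cardinals below $\kappa$, so they cannot be ${<}\,\kappa$-distributive. The paper instead composes: the criterion with $\delta=\tau^+$ for the $\dM_0$-quotient (whose term part, being ${<}\,\mu$-closed, is strongly ${<}\,\tau^+$-distributive even after Knaster forcing), Knasterness of $\Add(\mu,\cdot)$ for the ${<}\,\nu$-approximation of the other factor, and a covering argument to glue the two extensions; this is what its Lemma on $\dM_1$ does. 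Second, with mere $\lambda$-ineffability (as opposed to supercompactness) there is no single embedding witnessing ineffability for all lists: the witness depends on the list, and ineffability in $V$ applies only to lists in $V$, while your slender list lives in $V[G]$. So "fix an embedding $j$ witnessing $\lambda$-ineffability, then lift" is not available as stated; one must first convert the name $\dot{f}$ into a ground-model list $e$ (using the approximation property of the quotient) and apply ineffability to $e$ — this conversion is the heart of the paper's Theorem on forcing $\ISP$, carried out there in the dual, small-model (Mostowski collapse) form rather than via lifted large embeddings, and it is missing from your sketch.
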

	
	The paper is organized as follows: After giving preliminaries and reviewing facts and definitions from \cite{JakobDisjointInterval}, we prove Theorem A. Afterwards we give the criterion for when a forcing order preserves $\ISP$. In the last sections, we prove the consistency results mentioned above.
	
	\begin{myack}
		This paper forms a part of the author's PhD thesis. The author would like to thank his advisor, Heike Mildenberger, for many fruitful discussions surrounding this material. He would also like to thank Maxwell Levine for alerting him to the open questions guiding this paper and discussing them with him. Lastly, he would like the thank the referee for their thorough report and many helpful comments.
	\end{myack}

	\section{Preliminaries}
	
	We assume the reader is familiar with the basics of forcing. Good introductory material can be found in the books by Jech (cf. \cite{JechSetTheory}) and Kunen (cf. \cite{KunenSetTheory}). An introduction into techniques regarding large cardinals can be found in the book by Kanamori (cf. \cite{KanamoriHigherInfinite}).
	
	\subsection*{Basic Forcing Facts}\hfill
	
	Our notation is fairly standard. We follow the convention that filters are upward closed, so that $q\leq p$ means that $q$ forces at least as much as $p$. $\dP\uhr p$ is defined as $\{q\in \dP\;|\;q\leq p\}$. $V[\dP]$ denotes an arbitrary extension by $\dP$, i.e. ``$V[\dP]$ has property $P$`` means that for every $\dP$-generic $G$, $V[G]$ has property $P$. Lastly, for any forcing order $\dP$, we let $\Gamma_{\dP}$ (omitting $\dP$ when it is clear which order we are talking about) be the canonical $\dP$-name such that $\Gamma_{\dP}^G=G$ whenever $G$ is a $\dP$-generic filter.
	
	\begin{mydef}
		Let $V\subseteq W$ be models of set theory with the same ordinals and $\kappa$ a regular cardinal in $V$. $(V,W)$ has the \emph{${<}\,\kappa$-covering property} if for any set $x\in W$, $x\subseteq V$ of size ${<}\,\kappa$ in $W$ there is $y\in V$ of size ${<}\,\kappa$ in $V$ such that $x\subseteq y$. A forcing order $\dP$ has the $\kappa$-covering property if $1_{\dP}$ forces that $(V,V[\dP])$ has it.
	\end{mydef}
	
	If $(V,W)$ has the ${<}\,\kappa$-covering property, where $\kappa$ is a regular cardinal, $\kappa$ remains a regular cardinal in $W$: Otherwise there exists a cofinal subset of $\kappa$ in $W$ of size ${<}\,\kappa$. This set is covered by an element of $V$ of size ${<}\,\kappa$, contradicting the regularity of $\kappa$. If $\dP$ is $\kappa$-cc. and $\dot{f}$ any $\dP$-name for a function from an ordinal into $V$, then there are fewer than $\kappa$ possibilities for any value $\dot{f}(\check{\alpha})$, so we obtain:
	
	\begin{myfact}
		If $\dP$ has the $\kappa$-cc., $\dP$ has the $\kappa$-covering property.
	\end{myfact}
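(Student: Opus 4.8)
The plan is to verify the ${<}\,\kappa$-covering property directly in an arbitrary generic extension, using exactly the observation recorded just before the Fact. So fix a $\dP$-generic filter $G$ together with a set $x\in V[G]$ satisfying $x\subseteq V$ and $|x|<\kappa$ in $V[G]$; the goal is to produce a covering set $y\in V$ with $|y|<\kappa$ in $V$.

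First I would note that $\kappa$ remains a regular cardinal in $V[G]$: since $\dP$ is $\kappa$-cc., it preserves cofinalities and cardinals ${\geq}\,\kappa$, so the regular cardinal $\kappa$ is not collapsed. Hence in $V[G]$ there is an ordinal $\gamma<\kappa$ and a surjection $f\colon\gamma\to x$; because $x\subseteq V$, the range of $f$ lies in $V$. I then pick a $\dP$-name $\dot f$ for this function and a condition $p\in G$ forcing that $\dot f$ is a surjection from $\check\gamma$ onto a subset of the ground model.

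The key step is to read off a ground-model bound on the possible values of $\dot f$. For each $\alpha<\gamma$, work in $V$ and set
\[
  y_\alpha=\{\,a\in V:(\exists q\leq p)\ q\Vdash\dot f(\check\alpha)=\check a\,\}.
\]
Choosing for each $a\in y_\alpha$ a witnessing condition $q_a$, distinct values $a$ yield pairwise incompatible conditions (they force contradictory values of $\dot f(\check\alpha)$), so $\{q_a:a\in y_\alpha\}$ is an antichain; by the $\kappa$-cc. we conclude $|y_\alpha|<\kappa$. Now put $y=\bigcup_{\alpha<\gamma}y_\alpha$, an element of $V$. Since $\kappa$ is regular in $V$, $\gamma<\kappa$, and each $|y_\alpha|<\kappa$, we obtain $|y|^V<\kappa$. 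Finally, every element of $x$ has the form $f(\alpha)=\dot f(\check\alpha)^G$ for some $\alpha<\gamma$, and this value is forced by some condition in $G$ extending $p$, hence lies in $y_\alpha\subseteq y$; thus $x\subseteq y$, as required.

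I expect no serious obstacle here: the single point demanding care is that the surjection $f$ really maps into $V$ (guaranteed by $x\subseteq V$), so that $\dot f$ may be taken as a name for a function into the ground model and the antichain argument produces genuine ground-model witnesses. Given that $\kappa$ is regular in both models, the remaining cardinality bookkeeping is entirely routine.
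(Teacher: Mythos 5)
Your proof is correct and takes essentially the same approach as the paper: the paper's one-sentence justification (for a name $\dot f$ of a function from an ordinal into $V$, the $\kappa$-cc.\ leaves fewer than $\kappa$ possibilities for each value $\dot f(\check\alpha)$) is exactly your antichain argument, and you have merely filled in the routine bookkeeping of unioning the value sets over a domain of size ${<}\,\kappa$ and invoking the regularity of $\kappa$ in $V$.
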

	
	When arguing the preservation of properties which are downwards absolute, we will frequently make use of projections, which are a way of showing that an extension by some order $\dQ$ is contained in an extension by another order $\dP$.
	
	\begin{mydef}
		Let $\dP$ and $\dQ$ be forcing orders. A function $\pi\colon\dP\to\dQ$ is a \emph{projection} if the following hold:
		\begin{enumerate}
			\item $\pi(1_{\dP})=1_{\dQ}$.
			\item For all $p\leq q$, $\pi(p)\leq \pi(q)$
			\item For all $p\in\dP$, if $q\leq \pi(p)$, there is some $p'\leq p$ such that $\pi(p')\leq q$.
		\end{enumerate}
	\end{mydef}
	
	If there exists a projection from $\dP$ to $\dQ$, any extension by $\dQ$ can be forcing extended to an extension by $\dP$.
	
	\begin{mydef}
		Let $\dP$ and $\dQ$ be forcing orders, $\pi\colon\dP\to\dQ$ a projection. Let $H$ be $\dQ$-generic. In $V[H]$, the forcing order $\dP/H$ consists of all $p\in\dP$ such that $\pi(p)\in H$. We let $\dP/\dQ$ be a $\dQ$-name for $\dP/\dot{H}$ and call $\dP/\dQ$ the \emph{quotient forcing} of $\dP$ and $\dQ$.
	\end{mydef}
	
	\begin{myfact}
		Let $\dP$ and $\dQ$ be forcing orders and $\pi\colon\dP\to\dQ$ a projection. If $H$ is $\dQ$-generic over $V$ and $G$ is $\dP/H$-generic over $V[H]$, then $G$ is $\dP$-generic over $V$ and $H\subseteq\pi[G]$. In particular, $V[H][G]=V[G]$.
	\end{myfact}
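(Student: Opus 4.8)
The plan is to verify the three assertions in turn: that $G$ is a filter on all of $\dP$ (not merely on $\dP/H$), that $G$ meets every dense subset of $\dP$ lying in $V$, and finally that $H$ can be recovered from $G$, which yields the equality of models. First I would check that $G$ is a filter on $\dP$. Downward directedness is inherited directly, since any two elements of $G$ already have a common extension in $G\subseteq\dP/H$. For upward closure, suppose $p\in G$ and $p\leq r$ in $\dP$; by monotonicity of $\pi$ we have $\pi(p)\leq\pi(r)$, and since $\pi(p)\in H$ and $H$ is upward closed, $\pi(r)\in H$, so $r\in\dP/H$; as $G$ is upward closed in $\dP/H$ we conclude $r\in G$.

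The crux is genericity. Fix $D\in V$ dense in $\dP$; it suffices to show that $D\cap(\dP/H)$ is dense in $\dP/H$, since this set lies in $V[H]$ and $G$ is $\dP/H$-generic over $V[H]$. Fix $p\in\dP/H$, so $\pi(p)\in H$, and working in $V$ consider the auxiliary set
$$\bar D_p=\{\pi(p')\mid p'\leq p,\ p'\in D\}\cup\{q\in\dQ\mid q\perp\pi(p)\}.$$
I would argue $\bar D_p$ is dense in $\dQ$: given $r\in\dQ$ incompatible with $\pi(p)$ we are done, and otherwise a common extension $s\leq r,\pi(p)$ together with the projection property (3) applied to $p$ and $s\leq\pi(p)$ produces $p_1\leq p$ with $\pi(p_1)\leq s$; density of $D$ then yields $p'\leq p_1$ in $D$, and $\pi(p')\leq\pi(p_1)\leq s\leq r$ witnesses $\pi(p')\in\bar D_p$ below $r$. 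By genericity of $H$ over $V$ there is $q\in H\cap\bar D_p$; as $q$ and $\pi(p)$ both lie in the filter $H$ they are compatible, so $q$ cannot belong to the second part of $\bar D_p$, whence $q=\pi(p')$ for some $p'\leq p$ in $D$. Then $\pi(p')=q\in H$, so $p'\in D\cap(\dP/H)$ with $p'\leq p$, as required. The main obstacle, and the reason for the precise shape of $\bar D_p$, is the convention that filters are upward closed: from $\pi(p')\leq q\in H$ one \emph{cannot} infer $\pi(p')\in H$, so I must arrange that the value $\pi(p')$ itself, rather than some weaker condition lying above it, is the element extracted from $H$.

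Finally I would recover $H$ from $G$. For $q\in H$ (note $q\in\dQ\subseteq V$), the set $D_q=\{p\in\dP\mid \pi(p)\leq q\text{ or }\pi(p)\perp q\}$ is dense in $\dP$ in $V$, again using property (3) to refine below a common extension of $\pi(p)$ and $q$; by the genericity just established there is $p\in G\cap D_q$, and since $\pi(p)$ and $q$ both lie in $H$ they are compatible, forcing the first alternative $\pi(p)\leq q$. Thus every element of $H$ lies above some member of $\pi[G]$, while conversely $\pi(p)\in H$ for each $p\in G$; hence $H$ is exactly the filter on $\dQ$ generated by $\pi[G]$. Since $\pi\in V$ and $G\in V[G]$, this description computes $H$ inside $V[G]$, giving $V[H]\subseteq V[G]$ and therefore $V[H][G]=V[G]$.
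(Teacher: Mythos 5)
The paper states this Fact without proof --- it is standard folklore about projections and quotients --- so there is no argument in the paper to compare yours against; judged on its own, your proof is correct and complete. Your device of putting the exact values $\pi(p')$ into the auxiliary dense set $\bar D_p$ is a sound variant of the more common one, which instead takes all $q\in\dQ$ with $q\leq\pi(p')$ for some $p'\leq p$ in $D$ and then uses upward closure of $H$ to recover $\pi(p')\in H$ from $q\in H$; both resolve the convention issue you correctly identified.

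One point deserves attention: what you actually establish is that $H$ is the filter generated by $\pi[G]$, namely $H=\{q\in\dQ\;|\;\exists p\in G\;\pi(p)\leq q\}$, rather than the literal inclusion $H\subseteq\pi[G]$ asserted in the statement. This is not a defect of your argument --- the literal inclusion is false in general, since the definition of projection does not require $\pi$ to be surjective: take $\dQ$ to be Cohen forcing, $\dP$ the suborder of conditions of even length, and $\pi$ the inclusion map (this satisfies all three clauses of the definition), and note that $H$ contains conditions of odd length, which lie outside the range of $\pi$ altogether. The generated-filter formulation you prove is the correct reading of the Fact, and it is exactly what the conclusion $V[H][G]=V[G]$ requires, since it lets you compute $H$ from $G$ and $\pi$ inside $V[G]$.
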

	
	The approximation property was used implicitly by Mitchell in his proof that the tree property can consistently hold at successor cardinals. Later it was defined explicitly by Hamkins in \cite{HamkinsGap}:
	
	\begin{mydef}
		Let $V\subseteq W$ be models of set theory and $\delta$ a cardinal in $V$. $(V,W)$ has the \emph{${<}\,\delta$-approximation property} if $x\in V$ holds whenever $x\in W$ is such that $x\cap z\in V$ for every $z\in([x]^{<\delta})^V$. A poset $\dP$ has the ${<}\,\delta$-approximation property if $(V,V[G])$ has it for every $\dP$-generic filter $G$.
	\end{mydef}
	
	In his paper introducing the approximation property, Hamkins also showed that iterations $\dP*\dQ$ where $\dP$ is of size $<\delta$ and $\dQ$ is ${<}\,\delta$-strategically closed have the ${<}\,\delta$-approximation property. Over the years, this criterion has been improved step by step (and criteria of a very different flavor such as strong properness have been introduced). In this paper, we will obtain a further weakening of the necessary assumptions.
	
	$(\kappa,\lambda)$-lists were introduced by Weiss in his PhD thesis. We introduce both notions of ``thinness`` for completeness although we will only work with slender lists.
	
	\begin{mydef}
		Let $\kappa\leq\lambda$ be regular cardinals.
		\begin{enumerate}
			\item A \emph{$(\kappa,\lambda)$-list} is a function $f\colon[\lambda]^{<\kappa}\to[\lambda]^{<\kappa}$ such that $f(a)\subseteq a$ for every $a\in[\lambda]^{<\kappa}$
			\item A $(\kappa,\lambda)$-list $f$ is \emph{thin} if for every $x\in[\lambda]^{<\kappa}$,
			$$|\{f(z)\cap x\;|\;x\subseteq z\in[\lambda]^{<\kappa}\}|\;<\kappa$$
			\item For a cardinal $\delta\leq\kappa$, a $(\kappa,\lambda)$-list $f$ is \emph{$\delta$-slender} if for every sufficiently large $\Theta$ there exists a club $C\subseteq[H(\Theta)]^{<\kappa}$ such that for every $M\in C$:
			$$\forall z\in[\lambda]^{<\delta}\cap M\;f(M\cap\lambda)\cap z\in M$$
			\item If $f$ is a $(\kappa,\lambda)$-list, $b\subseteq\lambda$ is an \emph{ineffable branch} for $f$ if the set
			$$\{z\in[\lambda]^{<\kappa}\;|\;z\cap b=f(z)\}$$
			is stationary in $[\lambda]^{<\kappa}$.
		\end{enumerate}
		For $\delta\leq\kappa$, the statement $\ISP(\delta,\kappa,\lambda)$ says that every $\delta$-slender $(\kappa,\lambda)$-list has an ineffable branch.
	\end{mydef}
	
	\section{Strong Distributivity and Orders on Products}
	
	We expand the framework for working with arbitrary orders on products of sets by a new property and prove a result on when a forcing has the ${<}\,\delta$-approximation property.
	
	We first review definitions and results from \cite{JakobDisjointInterval}.
	
	\subsection*{Strong Distributivity}\hfill
	
	\begin{mydef}
		Let $\dP$ be a poset and $\delta$ a cardinal. $\dP$ is \emph{strongly ${<}\,\delta$-distributive} if for every $p\in\dP$ and all sequences $(D_{\alpha})_{\alpha<\delta}$ of open dense subsets of $\dP\uhr p$ there is a descending sequence $(p_{\alpha})_{\alpha<\delta}$ such that for every $\alpha<\delta$, $p_{\alpha}\in D_{\alpha}$.
	\end{mydef}
	
	Strong distributivity can be seen as a kind of uniform distributivity. It is related to the completeness game on a partial order which we will define now:
	
	\begin{mydef}
		Let $\dP$ be a forcing order, $\delta$ an ordinal. The \emph{completeness game} $G(\dP,\delta)$ on $\dP$ with length $\delta$ has players COM (complete) and INC (incomplete) playing elements of $\dP$ with COM playing at even ordinals (and limits) and INC playing at odd ordinals. COM starts by playing $1_{\dP}$, afterwards $p_{\alpha}$ has to be a lower bound of $(p_{\beta})_{\beta<\alpha}$. INC wins if COM is unable to play at some point $<\delta$. Otherwise, COM wins.
	\end{mydef}
	
	A forcing order $\dP$ is ${<}\,\kappa$-distributive if and only if for all $\alpha<\kappa$, INC does not have a winning strategy in $G(\dP,\alpha)$. The proof adapts to show the following:
	
	\begin{mysen}[Theorem 3.6 in \cite{JakobDisjointInterval}]
		$\dP$ is strongly ${<}\,\kappa$-distributive if and only if INC does not have a winning strategy in $G(\dP,\kappa)$.
	\end{mysen}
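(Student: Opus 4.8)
The plan is to prove the equivalence by establishing both contrapositives, closely mirroring the proof of the classical fact recalled just above (that ${<}\,\kappa$-distributivity is equivalent to INC lacking a winning strategy in $G(\dP,\alpha)$ for every $\alpha<\kappa$); the only genuine change is that we run the game to the single length $\kappa$ and correspondingly use the uniform strengthening provided by strong distributivity. Throughout I assume $\kappa$ regular (as in all intended applications), the one place this is used being a reindexing argument below.

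For the direction ``not strongly ${<}\,\kappa$-distributive $\Rightarrow$ INC has a winning strategy'', I first reformulate strong distributivity: for regular $\kappa$, $\dP$ is strongly ${<}\,\kappa$-distributive iff for every $p$ and every sequence $(D_\alpha)_{\alpha<\kappa}$ of open dense subsets of $\dP\uhr p$ there is a single descending $\kappa$-sequence below $p$ meeting every $D_\alpha$ (not necessarily with the $\alpha$-th term in $D_\alpha$). One implication is trivial; for the other, given a sequence $(q_\xi)_\xi$ meeting every $D_\alpha$, the openness of the $D_\alpha$ lets one reindex by $\alpha\mapsto\sup_{\beta\le\alpha}\xi_\beta$ (where $\xi_\alpha$ is least with $q_{\xi_\alpha}\in D_\alpha$), which stays below $\kappa$ by regularity, recovering the in-order sequence. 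Granting this, a failure of strong distributivity yields $p^*$ and open dense $(E_\alpha)_{\alpha<\kappa}$ below $p^*$ such that no descending $\kappa$-sequence meets all of them. INC's winning strategy is then to descend below $p^*$ and, at each of its moves, step into the least-indexed $E_\alpha$ not yet met by the play (possible by density); were COM able to survive all $\kappa$ stages, INC's cofinally many moves would meet every $E_\alpha$, contradicting the choice of the $E_\alpha$. Hence COM must get stuck at some limit, so INC wins.

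The reverse direction ``INC has a winning strategy $\Rightarrow$ not strongly ${<}\,\kappa$-distributive'' is the heart of the matter. Since strong distributivity implies ${<}\,\kappa$-distributivity (apply the definition to a sequence that is eventually all of $\dP\uhr p$), I may assume $\dP$ is ${<}\,\kappa$-distributive, as otherwise there is nothing to prove. Fix a winning strategy $\sigma$ for INC and put $p^*=\sigma(\langle 1_\dP\rangle)$, so that after the forced opening COM may descend to any condition below $p^*$. The construction builds, by recursion on $\alpha<\kappa$, a refining sequence of maximal antichains $A_\alpha$ below $p^*$ together with a coherent tagging of each $a\in A_\alpha$ by a $\sigma$-play reaching $a$ and surviving past stage $\alpha$; the witnessing dense open sets are the downward closures $D_\alpha$ of the $A_\alpha$. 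A threading $\kappa$-sequence for $(D_\alpha)_\alpha$ would, by the refinement and the pairwise incompatibility within each antichain, cohere into a single descending branch $(a_\alpha)_\alpha$ and hence into a single $\sigma$-play; the threading conditions furnish lower bounds at every limit, so this play would have COM surviving all $\kappa$ stages, contradicting that $\sigma$ is winning. Thus $(D_\alpha)_\alpha$ admits no threading sequence and $\dP$ is not strongly ${<}\,\kappa$-distributive.

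The main obstacle is precisely the management of limit stages in this last construction, and it is here that the already-stated classical theorem does the essential work. To keep each $D_\alpha$ (equivalently each $A_\alpha$) dense, hence to complete the recursion through all $\alpha<\kappa$, I need that for every $c\le p^*$ and every $\alpha<\kappa$ some $\sigma$-play below $c$ survives past stage $\alpha$. This is exactly what ${<}\,\kappa$-distributivity buys: applying the classical equivalence to $\dP\uhr c$, INC can have no winning strategy in the length-$\alpha$ game there, so $\sigma$ cannot force COM to be stuck by stage $\alpha$ below $c$, and such surviving positions are therefore dense below $p^*$. A secondary subtlety is the adaptivity of $\sigma$ — one cannot make a single prescribed descending sequence literally follow $\sigma$ — but this is circumvented by branching over the maximal antichains $A_\alpha$ (treating all of COM's possible moves simultaneously) rather than tracking one play, so that $\sigma$, being a function, induces a well-defined coherent tagging. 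Assembling these two points with the threading guaranteed by strong distributivity yields the contradiction.
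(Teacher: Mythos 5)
Your first direction is correct: the reindexing reformulation of strong distributivity (this is where regularity of $\kappa$ enters, a harmless standing assumption given how the theorem is used in the paper), the strategy for INC that always steps into the least-indexed unmet $E_\alpha$, and the observation that in a full run of length $\kappa$ every $E_\alpha$ gets met are all sound; this is the straightforward half of the adaptation the paper alludes to (the paper only quotes the theorem from the earlier paper and notes that the classical proof adapts, so you are being measured against that standard adaptation). The genuine gap is in the second direction, at exactly the point you yourself call the main obstacle: the limit stages of the antichain construction. The fact you prove there --- for every $c\le p^*$ and every $\alpha<\kappa$ some $\sigma$-play below $c$ survives past stage $\alpha$ --- is true (your translation of $\sigma$ to a strategy on $\dP\uhr c$ works), but it does not do the required work. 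At a limit $\gamma$ you must produce, densely below $p^*$, conditions lying below a \emph{coherently tagged branch} through the already-built antichains $(A_\beta)_{\beta<\gamma}$. The surviving play below $c$ that your fact supplies is an arbitrary $\sigma$-play: its history need not pass through any of the $A_\beta$ at all, because at each successor step you pruned the dense set of $\sigma$-responses down to a maximal antichain and thereby discarded most plays. So your fact yields no element below $c$ that refines the earlier levels with a compatible tag, and $A_\gamma$ cannot be extracted from it; conversely, if you refrain from pruning so that your fact does give density of each level, then branches through the levels are no longer unique and the final threading argument collapses. Your density lemma and the coherence requirement pull in opposite directions, and the proof as written does not reconcile them.

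The repair is to use your WLOG reduction to the ${<}\,\kappa$-distributive case directly rather than through the classical game equivalence. At a limit $\gamma$, each $D_\beta$ ($\beta<\gamma$) is open dense below $p^*$ by induction, so ${<}\,\kappa$-distributivity makes $\bigcap_{\beta<\gamma}D_\beta$ dense below $p^*$. Any $c$ in this intersection lies below exactly one $a_\beta\in A_\beta$ for each $\beta$ (pairwise incompatibility within each $A_\beta$ plus refinement force these choices to cohere into a branch), the tags of the $a_\beta$ then unite into a single $\sigma$-play of limit length of which $c$ is a lower bound, so $c$ is a legal move for COM at that limit and INC's answer under $\sigma$ is defined and $\le c$. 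These answers are therefore dense below $p^*$, and a maximal antichain among them, tagged by the correspondingly extended plays, serves as $A_\gamma$. With this replacement the remainder of your argument --- a threading sequence for $(D_\alpha)_{\alpha<\kappa}$ determines a unique branch, the branch assembles into a run of length $\kappa$ according to $\sigma$ in which COM never gets stuck, contradicting that $\sigma$ is winning --- goes through; note that after the fix the classical theorem is not needed as a lemma anywhere in this direction, only plain ${<}\,\kappa$-distributivity.
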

	
	An stronger property is $\kappa$-strategic closure:
	
	\begin{mydef}
		A poset $\dP$ is \emph{$\kappa$-strategically closed} if COM has a winning strategy in $G(\dP,\kappa)$. It is \emph{${<}\,\kappa$-strategically closed} if for all $\alpha<\kappa$, COM has a winning strategy in $G(\dP,\alpha)$.
	\end{mydef}
	
	It is clear that if $\dP$ is $\kappa$-strategically closed it is strongly ${<}\,\kappa$-distributive. However, there is no provable connection between ${<}\,\kappa$-strategic closure and strong ${<}\,\kappa$-distributivity (see Example 3.11 and Example 3.12 in \cite{JakobDisjointInterval}).
	
	\subsection*{Orders on Products}
	
	As variants of Mitchell Forcing are neither exactly products nor iterations, it is helpful to think of them as abstract orders on products of sets with certain regularity properties. We note that we will not assume that our partial orders are antisymmetric, i.e. for a partial order $\dP$ there can be $p,p'\in\dP$ such that $p\leq p'\leq p$ but $p\neq p'$. This situation often occurs in iterated forcing, since, if $\tau$ and $\tau'$ are different $\dP$-names but $p\Vdash\tau=\tau'$, we have $(p,\tau)\neq(p,\tau')$ but $(p,\tau)\leq(p,\tau')\leq(p,\tau)$.
	
	\begin{mydef}\label{OrdersOnProducts}
		Let $\dP$ and $\dQ$ be nonempty sets and $R$ a partial order on $\dP\times\dQ$. We define the following related partial orders:
		\begin{enumerate}
			\item The \emph{base ordering} $b(R)$ is an ordering on $\dP$ given by $p(b(R))p'$ if there are $q_0,q_1\in\dQ$ such that $(p,q_0)R(p',q_1)$.
			\item The \emph{term ordering} $t(R)$ is an ordering on $\dP\times\dQ$ given by $(p,q)(t(R))(p',q')$ if $(p,q)R(p',q')$ and $p=p'$.
			\item For $p\in\dP$, the \emph{section ordering} $s(R,p)$ is an ordering on $\dQ$ given by $q(s(R,p))q'$ if $(p,q)R(p,q')$.
		\end{enumerate}
		We also fix the following properties:
		\begin{enumerate}[label=(\roman*)]
			\item $(\dP\times\dQ,R)$ is \emph{based} if for all $p,p'$,
			$$\exists q_0,q_1((p,q_0)R(p',q_1)\longrightarrow\forall q((p,q)R(p',q)))$$
			\item $(\dP\times\dQ,R)$ has the \emph{projection property} if whenever $(p',q')R(p,q)$, there is $q''\in\dQ$ such that $(p,q'')R(p,q)$ and $(p',q'')R(p',q')R(p',q'')$.
			\item $(\dP\times\dQ,R)$ has the \emph{refinement property} if $p'(b(R)) p$ implies that $s(R,p')$ refines $s(R,p)$, i.e. whenever $(p,q')R(p,q)$ and $p'(b(R)) p$, also $(p',q')R(p',q)$.
			\item $(\dP\times\dQ,R)$ has the \emph{mixing property} if whenever $(p,q_0),(p,q_1)R(p,q)$, there are $p_0,p_1\in\dP$ and $q'\in\dQ$ with $(p,q') R (p,q)$ such that $(p_i,q') R (p,q_i)$ for $i=0,1$.
		\end{enumerate}
		We say that $(\dP\times\dQ,R)$ is \emph{iteration-like} if $(\dP\times\dQ,R)$ is based and has the projection property, the refinement property and the mixing property.
	\end{mydef}
	
	The projection and refinement property hold in almost all cases, and always for iterations and products. They are necessary for most of the relevant techniques. The mixing property roughly states that we can mix elements of $\dQ$ modulo $\dP$ and holds e.g. in iterations $\dP*\dot{\dQ}$ if $\dP$ is atomless:
	
	\begin{myex}
		Let $(\dP,\leq_{\dP})$ be a partial order and let $(\dot{\dQ},\leq_{\dot{\dQ}})$ be a $\dP$-name for a partial order. Let $\tilde{\dQ}$ be the set consisting of all $\dP$-names for elements of $\dot{\dQ}$ (technically, $\tilde{\dQ}$ is a class but we can take a sufficiently large set $\tilde{\dQ}$ such that every $\dP$-name for an element of $\dot{\dQ}$ has an equivalent $\dP$-name in $\tilde{\dQ}$). Then the iteration $\dP*\dot{\dQ}$ is equivalent to an order $R$ on the product $\dP\times\tilde{\dQ}$ if we let $(p',\dot{q}') R (p,\dot{q})$ if and only if $p'\leq_{\dP} p$ in $\dP$ and $p'\Vdash\dot{q}'\leq_{\dot{\dQ}}\dot{q}$. Let us explicitely calculate the related partial orders and check if $\dP\times\tilde{\dQ}$ has the defined properties:
		\begin{enumerate}
			\item The base ordering is simply equivalent to $(\dP,\leq_{\dP})$ since $(p,\dot{q}_0)R(p',\dot{q}_1)$ implies $p\leq_{\dP}p'$ and whenever $p\leq_{\dP}p'$, we can choose $\dot{q}_0=\dot{q}_1$.
			\item The term ordering is equivalent to the usual notion of the term ordering on an iteration as defined by Krueger (see the discussion before \cite[Proposition 2.1]{KruegerMitchellStyle}).
			\item For $p=1_{\dP}$, the section ordering $s(R,p)$ is equal to the termspace forcing as defined by Laver (see e.g. \cite[Chapter 22]{CummingsHandbook}).
		\end{enumerate}
		Now let us verify that $(\dP\times\tilde{\dQ},R)$ has the defined properties:
		\begin{enumerate}[label=(\roman*)]
			\item $(\dP\times\tilde{\dQ},R)$ is based, since $(p,\dot{q}_0)R(p',\dot{q}_1)$ implies $p\leq_{\dP}p'$ and thus that $(p,\dot{q})R(p',\dot{q})$ for any $\dot{q}\in\tilde{\dQ}$.
			\item $(\dP\times\tilde{\dQ},R)$ has the projection property: Let $(p',\dot{q}')R(p,\dot{q})$. Using standard arguments on names, let $\dot{q}''$ be a $\dP$-name for an element of $\dot{\dQ}$ which is forced by $p'$ to be equal to $\dot{q}'$ and by elements incompatible with $p'$ to be equal to $\dot{q}$. Then clearly $(p,\dot{q}'') R(p,\dot{q})$ and $(p,\dot{q}'')$ and $(p,\dot{q}')$ are equivalent.
			\item $(\dP\times\dQ,R)$ has the refinement property, since stronger conditions force more.
			\item $(\dP\times\dQ,R)$ has the mixing property if $\dP$ is atomless: Let $(p,\dot{q}_0),(p,\dot{q}_1)R(p,\dot{q})$. Since $\dP$ is atomless, we can find $p_0,p_1\leq p$ which are incompatible. Using standard arguments on names, we can find $\dot{q}'$ such that $p_0\Vdash\dot{q}'=\dot{q}_0$ and $p_1\Vdash\dot{q}'=\dot{q}_1$. Assume that conditions incompatible with both $p_0$ and $p_1$ force $\dot{q}'=\dot{q}$. Then clearly $\dot{q}'$ is as required.
		\end{enumerate}
	\end{myex}
	
	Let us note that the term ordering is the disjoint union of the section orderings, so if the term ordering is ${<}\,\delta$-closed (strategically closed, strongly distributive etc.) for some $\delta$, so are all the section orderings (and vice versa).
	
	\begin{mylem}\label{PPImpliesProjection}
		If $(\dP\times\dQ,R)$ is based and has the projection and refinement property, the identity is a projection from $(\dP,b(R))\times(\dQ,s(R,1_{\dP}))$ onto $(\dP\times\dQ,R)$.
	\end{mylem}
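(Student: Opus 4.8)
The plan is to verify the three defining clauses of a projection directly for the identity map, from the product order $(\dP,b(R))\times(\dQ,s(R,1_{\dP}))$ onto $(\dP\times\dQ,R)$. I abbreviate the product ordering by $S$, so that $(p',q')\,S\,(p,q)$ means $p'\,(b(R))\,p$ together with $(1_{\dP},q')\,R\,(1_{\dP},q)$; the point to keep in mind is that the second conjunct is the section relation at $1_{\dP}$, which by the refinement property is the \emph{coarsest} of all the section orderings. The clause that the identity preserves the top element is the easiest: one checks that $1_{\dP}$ is the maximum of $b(R)$ and $1_{\dQ}$ the maximum of $s(R,1_{\dP})$ --- both by instantiating the free $\dQ$-coordinate at $1_{\dQ}$ and using that $(1_{\dP},1_{\dQ})$ is the top of $R$ --- so that the top of $S$ is literally $(1_{\dP},1_{\dQ})$, as required.

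For monotonicity I would start from $(p',q')\,S\,(p,q)$ and combine the hypotheses in two steps. First, since $p'\,(b(R))\,1_{\dP}$ and $(1_{\dP},q')\,R\,(1_{\dP},q)$, the refinement property transfers this top-level relation down to $p'$, giving $(p',q')\,R\,(p',q)$. Second, since $p'\,(b(R))\,p$, basedness applied with the $\dQ$-coordinate fixed at $q$ yields $(p',q)\,R\,(p,q)$. Transitivity of $R$ then gives $(p',q')\,R\,(p,q)$, which is exactly what the second clause demands.

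The remaining clause is the crux and the step I expect to cause the most trouble, since it is the abstract analogue of the name-mixing in the classical proof that an iteration is a projection of the base times the termspace forcing. Here I am given $(p^*,q^*)\,R\,(p,q)$ and must find $(p',q')\,S\,(p,q)$ with $(p',q')\,R\,(p^*,q^*)$; the obstruction is that belonging to $S$ forces a relation in the section at $1_{\dP}$, whereas the projection property only returns a relation in the section at its own upper condition. The idea that resolves this is to raise the upper condition to $1_{\dP}$ before applying the projection property: basedness gives $(p,q)\,R\,(1_{\dP},q)$, hence $(p^*,q^*)\,R\,(1_{\dP},q)$ by transitivity, and now the projection property applied to \emph{this} relation produces some $q''$ with $(1_{\dP},q'')\,R\,(1_{\dP},q)$ and $(p^*,q'')\,R\,(p^*,q^*)\,R\,(p^*,q'')$. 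I then take $(p',q')=(p^*,q'')$: the relation $(1_{\dP},q'')\,R\,(1_{\dP},q)$ together with $p^*\,(b(R))\,p$ shows $(p^*,q'')\,S\,(p,q)$, while $(p^*,q'')\,R\,(p^*,q^*)$ is exactly $(p',q')\,R\,(p^*,q^*)$. The only subtlety to confirm is that the projection property may legitimately be invoked with upper condition $1_{\dP}$, which it can, as its statement quantifies universally over that coordinate; everything else is a routine chase through the defining properties.
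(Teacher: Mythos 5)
Your proof is correct and follows essentially the same route as the paper's: the key step for the third projection clause---raising the upper condition to $1_{\dP}$ via basedness and transitivity, then invoking the projection property at $1_{\dP}$ to obtain $q''$ with $(1_{\dP},q'')\,R\,(1_{\dP},q)$ and $(p^*,q'')\,R\,(p^*,q^*)\,R\,(p^*,q'')$---is exactly the paper's argument. The only cosmetic differences are that you check the (trivial) top-element clause explicitly and that in the monotonicity step you chain $(p',q')\,R\,(p',q)\,R\,(p,q)$ (refinement at $p'$, basedness at $q$) where the paper chains $(p',q')\,R\,(p,q')\,R\,(p,q)$ (refinement at $p$, basedness at $q'$); these are symmetric uses of the same two properties.
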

	
	\begin{proof}
		Denote by $R_{\pi}$ the order on $(\dP,b(R))\times(\dQ,s(R,1_{\dP}))$. If $(p',q')R_{\pi}(p,q)$, $p' (b(R))p$ and $(1,q')R(1,q)$. By the refinement property, $(p,q')R(p,q)$. By basedness $(p',q')R(p,q')$. In summary,
		$$(p',q')R(p,q')R(p,q)$$
		Assume $(p',q')R(p,q)$. This implies $p' (b(R)) p$. Furthermore, by basedness $(p,q)R(1_{\dP},q)$. By the projection property there is $q''$ such that $(1_{\dP},q'')R(1_{\dP},q)$ and $(p',q'')R(p',q')R(p',q'')$. So $(p',q'')R_{\pi}(p,q)$ and $(p',q'')R(p',q')$.
	\end{proof}
	
	If $(\dP\times\dQ,R)$ is an ordering on a product, forcing with $\dP\times\dQ$ can also be regarded as first forcing with $\dP$ and then with a special ordering on $\dQ$. However, this will not be of use to us in this paper.
	
	In \cite{UngerFragIndestructII}, Unger showed that an iteration $\dP*\dQ$, where $\dP$ is square-$\delta$-cc. and $\dQ$ is forced to be ${<}\,\delta$-closed, has the ${<}\,\delta$-approximation property. We will improve this result in two ways: Firstly, we work with an arbitrary order on a product, allowing us to apply the result more easily to variants of Mitchell Forcing and secondly, we only need a ``distributivity`` assumption as opposed to one about closure. This is important, as, unlike ${<}\,\delta$-closure, strong ${<}\,\delta$-distributivity is preserved by forcing with $\delta$-cc. forcings. So in particular, if we have a forcing $(\dP\times\dQ,R)$ with its base ordering isomorphic to $\Add(\omega,X)$ for some set $X$ and a strongly ${<}\,\omega_1$-distributive term ordering, it will satisfy the conclusion of Lemma \ref{ApproxProp} in any ccc.\ extension. This is a slight improvement of Lemma 5.4 in \cite{UngerSuccessiveApproach}, where the square-ccc.\ is required instead.
	
	\begin{mysen}\label{ApproxProp}
		Let $(\dP\times\dQ,R)$ be an iteration-like partial order and $\delta$ a cardinal. Assume $(\dP,b(R))^2$ is $\delta$-cc. and $(\dP\times\dQ,t(R))$ is strongly $<\delta$-distributive. Then $(\dP\times\dQ,R)$ has the $<\delta$-approximation property.
	\end{mysen}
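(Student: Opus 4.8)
The plan is to argue by contradiction, manufacturing a $\delta$-sized antichain in the square $(\dP,b(R))^2$. Assume the approximation property fails: fix an $R$-name $\dot x$ for a subset of some ordinal $\lambda$ together with a condition $r_0=(p_0,q_0)$ forcing that every approximation $\dot x\cap\check z$ with $z\in V\cap[\lambda]^{<\delta}$ lies in $V$, while $\dot x\notin V$. I may assume $\delta$ is regular, so that sets built as unions of $<\delta$ many sets of size $<\delta$ stay of size $<\delta$. The goal is to produce pairs $\langle(p^0_\xi,p^1_\xi):\xi<\delta\rangle$ in $\dP\times\dP$ that are pairwise incompatible in $b(R)^2$, contradicting that $(\dP,b(R))^2$ is $\delta$-cc.

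First I would run a fusion of length $\delta$ in the term ordering. Since $(\dP\times\dQ,t(R))$ is strongly ${<}\,\delta$-distributive, for any prescribed family of $\delta$ many open dense subsets of the section $s(R,p_0)\uhr q_0$ there is a single descending sequence meeting all of them; this is what replaces the ${<}\,\delta$-closure used by Unger, and it is exactly strong enough to carry a recursion of length $\delta$ without needing lower bounds at limits. I would design the dense sets so that the resulting descending term-sequence $\langle q_\xi:\xi<\delta\rangle$ simultaneously (i) forces some base-extension $a_\xi\le_{b(R)}p_0$ to decide the approximation $\dot x\cap\check z_\xi=u_\xi$, where $z_\xi=\{\beta_\eta:\eta<\xi\}$ collects the split-points chosen so far (the value $u_\xi\in V$ exists by the approximation hypothesis), and (ii) exhibits a fresh split-point $\beta_\xi$. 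For (ii) I use $\dot x\notin V$: below $(a_\xi,q_\xi)$ there are two conditions forcing opposite values of $\dot x(\check\beta_\xi)$ for a new ordinal $\beta_\xi$, and invoking the projection, refinement and mixing properties (that is, iteration-likeness) these disagreeing conditions can be arranged to share one term-part $q_{\xi+1}\le_{s(R,p_0)}q_\xi$ while differing only in their bases, yielding $p^0_\xi,p^1_\xi\le_{b(R)}a_\xi$ with $(p^0_\xi,q_{\xi+1})\Vdash\check\beta_\xi\in\dot x$ and $(p^1_\xi,q_{\xi+1})\Vdash\check\beta_\xi\notin\dot x$. Because the $q_\xi$ descend in the section and $p^i_\xi\le_{b(R)}p_0$, the refinement property guarantees that $(p^i_\xi,q_{\eta+1})$ inherits the stage-$\xi$ decision for every later $\eta$.

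Next I would verify that the pairs form an antichain in $b(R)^2$, and this is where the approximation property does the real work. Fix $\xi<\eta$. Since $p^0_\eta,p^1_\eta\le_{b(R)}a_\eta$ and $(a_\eta,q_{\eta+1})$ forces $\dot x\cap\check z_\eta=u_\eta$, basedness gives that both $(p^0_\eta,q_{\eta+1})$ and $(p^1_\eta,q_{\eta+1})$ force $\dot x(\check\beta_\xi)$ to take the value $u_\eta(\beta_\xi)$ (note $\beta_\xi\in z_\eta$). On the other hand the two stage-$\xi$ sides disagree on $\beta_\xi$, so at least one of them, say $p^i_\xi$, forces $\dot x(\check\beta_\xi)=1-u_\eta(\beta_\xi)$. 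I then claim $p^i_\xi$ is incompatible in $b(R)$ with both $p^0_\eta$ and $p^1_\eta$: a common lower bound $c$ of $p^i_\xi$ and $p^j_\eta$ would, via basedness and the fact that $q_{\eta+1}$ refines the relevant term-parts, give a single condition $(c,q_{\eta+1})$ forcing two contradictory values of $\dot x(\check\beta_\xi)$. Whether $i=0$ or $i=1$, this yields one of the two disjuncts $p^0_\xi\perp p^0_\eta$ or $p^1_\xi\perp p^1_\eta$, which is exactly incompatibility of $(p^0_\xi,p^1_\xi)$ and $(p^0_\eta,p^1_\eta)$ in the square. Hence $\langle(p^0_\xi,p^1_\xi):\xi<\delta\rangle$ is a $\delta$-antichain in $(\dP,b(R))^2$, the desired contradiction.

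The main obstacle I anticipate is the coordination inside the fusion. Deciding an approximation $\dot x\cap\check z_\xi$ in general forces one to lower the base below $p_0$, whereas a term-fusion must fix a base to descend within a single section; reconciling these—so that the decided value is a genuine ground-model set inherited by every later side, while the two split-sides still share one term-part that all later stages refine—is precisely where the projection, refinement and mixing properties must be combined with care, and is the reason iteration-likeness, rather than a bare product or iteration, is the right hypothesis. A second delicate point is that strong ${<}\,\delta$-distributivity supplies only a descending sequence through prescribed dense sets, not a strategy or limit bounds, so \emph{every} requirement (deciding approximations, producing split-points, and preserving inherited decisions) must be pre-encoded as one of $\delta$ many dense sets met by a single sequence. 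Checking that these sets are genuinely dense—above all, that a fresh disagreement realizable with a common term-part can always be found below $(a_\xi,q_\xi)$—is the technical heart of the proof.
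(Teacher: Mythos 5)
Your overall architecture is the paper's: derive a contradiction with the $\delta$-cc.\ of $(\dP,b(R))^2$ by producing $\delta$ many pairs $(p^0_\xi,p^1_\xi)$ of base conditions that split a name along a \emph{single} descending sequence of term parts, using basedness, projection, refinement and mixing to get each splitting to share one term part (this is the paper's Lemma~\ref{DecisionByP}), and your verification that the pairs are pairwise incompatible in the square is correct. The genuine gap is in how you obtain the sequence $\langle q_\xi:\xi<\delta\rangle$. Strong ${<}\,\delta$-distributivity, in the form you invoke it, requires the family of open dense sets to be prescribed \emph{in advance} and then hands you one descending sequence meeting them. But your stage-$\xi$ requirement is adaptive: it refers to $z_\xi=\{\beta_\eta:\eta<\xi\}$, to $a_\xi$, and to the demand that $q_{\xi+1}$ realize a splitting below the specific condition $(a_\xi,q_\xi)$ produced by the run so far. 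Membership of a condition $q$ in the $\xi$-th dense set can depend only on $q$ itself, not on the initial segment of the sequence that will eventually precede it, so there is no static family of $\delta$ many open dense sets whose meeting sequence automatically carries your data. You flag this ``pre-encoding'' as the technical heart but do not supply it, and I do not see how it can be supplied: a candidate set such as ``$q$ splits $\dot x$ below every conceivable history $(z,a)$'' has no reason to be dense, since handling all (up to $|\dP|\cdot\lambda^{<\delta}$ many) histories simultaneously is exactly the kind of fusion that nothing in the hypotheses permits.

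The paper resolves precisely this point with the game characterization stated just before the theorem (Theorem 3.6 of \cite{JakobDisjointInterval}): a poset is strongly ${<}\,\delta$-distributive iff INC has no winning strategy in the completeness game of length $\delta$. A strategy, unlike a family of dense sets, may consult the history, so one lets INC's moves on the section ordering $(\dQ,s(R,p))\uhr q$ do exactly what your dense sets were meant to do (decide the current approximation, pick a fresh undecided ordinal, split with a common term part via Lemma~\ref{DecisionByP}, and project back over the fixed base $p$); since this strategy cannot be winning, some run has full length $\delta$, and that run yields your antichain. Your argument becomes correct essentially verbatim once ``prescribe $\delta$ many dense sets'' is replaced by ``define a strategy for INC and take a full run against it.'' Two minor remarks: your ``WLOG $\delta$ regular'' is neither justified nor needed, since $|z_\xi|\le|\xi|$ keeps all approximation domains of size ${<}\,\delta$ automatically; and your passage from the term ordering to the section ordering over $p_0$ is fine, as the two have the same distributivity properties.
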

	
	We begin with a helping lemma:
	
	\begin{mylem}\label{DecisionByP}
		Let $(\dP\times\dQ,R)$ be an iteration-like partial order. If $(p,q)$ forces $\dot{x}\in V$ but for any $y\in V$, $(p,q)\not\Vdash\dot{x}=\check{y}$, there are $q''\in\dQ$, $p_0,p_1 b(R) p$ and $y_0\neq y_1$ such that $(p,q'')R (p,q)$ and for $i\in 2$, $(p_i,q'')\Vdash\dot{x}=\check{y}_i$.
	\end{mylem}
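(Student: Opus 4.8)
The plan is to use that, because $(p,q)\Vdash\dot x\in V$, the conditions deciding $\dot x$ are dense below $(p,q)$, while, because $(p,q)$ does not decide $\dot x$, at least two distinct values $\check y$ are forced by such conditions. What we must produce is a single second coordinate $q''$ below $q$ in the section ordering, together with two base-extensions $p_0,p_1$ of $p$ forcing distinct values through this fixed $q''$. The naive strategy would be to take deciding conditions $(a_0,b_0)\Vdash\dot x=\check y_0$ and $(a_1,b_1)\Vdash\dot x=\check y_1$, use the projection property to replace their first coordinates by $p$, and then mix at $p$. The hard part is that this fails: after projecting $(a_i,b_i)$ to a condition $(p,q_i)$ with $(a_i,q_i)$ equivalent to $(a_i,b_i)$, the weaker condition $(p,q_i)$ generally does \emph{not} decide $\dot x$, since the value of $\dot x$ may genuinely depend on the first coordinate, so the conditions returned by mixing need not decide anything. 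The whole difficulty lies in merging two deciding conditions with different first coordinates into one second coordinate while keeping the decisions intact.

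To resolve this I would split according to whether the disagreement already occurs at the level of the section ordering $s(R,p)$. \textbf{Case A:} there are $(p,d_0),(p,d_1)\,R\,(p,q)$ with $(p,d_0)\Vdash\dot x=\check y_0$ and $(p,d_1)\Vdash\dot x=\check y_1$ for some $y_0\neq y_1$. A common section-extension of $(p,d_0)$ and $(p,d_1)$ would force both values, so these two conditions are incompatible in $s(R,p)$ and cannot be refined to a common lower bound; this is precisely the scenario the mixing property addresses. Applying mixing to $(p,d_0),(p,d_1)\,R\,(p,q)$ yields $p_0,p_1$ and $q''$ with $(p,q'')\,R\,(p,q)$ and $(p_i,q'')\,R\,(p,d_i)$, whence $(p_i,q'')\Vdash\dot x=\check y_i$ and $p_i(b(R))p$, as desired.

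\textbf{Case B:} no two section-extensions of $(p,q)$ force distinct values of $\dot x$. Then at most one value is forced by a section-extension, whereas at least two are forced by arbitrary extensions, so I can choose a deciding condition $(a_0,b_0)\Vdash\dot x=\check y_0$ whose value $y_0$ is forced by \emph{no} section-extension. Projecting $(a_0,b_0)$ gives $q_0$ with $(p,q_0)\,R\,(p,q)$ and $(a_0,q_0)$ equivalent to $(a_0,b_0)$; since $(p,q_0)$ is itself a section-extension and $(a_0,q_0)\Vdash\dot x=\check y_0$, the condition $(p,q_0)$ cannot decide $\dot x$, as it would have to decide the non-section-forced value $y_0$. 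As $(p,q_0)$ is undecided and forces $\dot x\in V$, and $(a_0,q_0)$ witnesses the value $y_0$ below it, some deciding condition $(a_1,b_1)\,R\,(p,q_0)$ forces a value $y_1\neq y_0$. Projecting $(a_1,b_1)$ \emph{relative to} $(p,q_0)$ produces $q_1$ with $(p,q_1)\,R\,(p,q_0)$ and $(a_1,q_1)\Vdash\dot x=\check y_1$. The key point is that $q_1$ is now a section-extension of $q_0$, so the refinement property gives $(a_0,q_1)\,R\,(a_0,q_0)$ and therefore $(a_0,q_1)\Vdash\dot x=\check y_0$. Taking $q''=q_1$, $p_0=a_0$, $p_1=a_1$ (both base-extensions of $p$) completes this case using only the projection and refinement properties.

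The step I expect to require the most care is the dichotomy itself and, within Case B, the counting argument producing a deciding condition whose projection stays undecided, together with the bookkeeping of which conditions remain equivalent after each use of the projection property. The conceptual obstacle throughout is that a decision about $\dot x$ cannot be pushed to a fixed first coordinate; the case split overcomes this by handling section-level disagreements with mixing and, otherwise, by nesting the second deciding condition below the projection of the first so that refinement preserves the first decision.
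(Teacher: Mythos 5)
Your proof is correct, but it uses a genuinely different decomposition from the paper's. The paper splits on whether \emph{some} section-extension of $(p,q)$ decides $\dot{x}$: if one does, say $(p,q_0)\Vdash\dot{x}=\check{y}_0$, the paper takes an arbitrary extension $(p',q''')$ of $(p,q)$ deciding a different value, uses basedness and refinement to bring both decisions down to the common base point $p'$, applies mixing \emph{at} $p'$, and then projects the mixed condition back to $p$ --- so its mixing case invokes all four properties and two projection steps; its other case (no section-extension decides anything) is the nested-projection argument. Your Case A instead assumes \emph{two} section-extensions deciding distinct values, where mixing at $p$ applies in a single step with nothing else needed; the price is that your Case B must also absorb the situation where exactly one value is section-decided, which the paper's second case excludes. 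You pay for that with the counting argument (at least two values are decided below $(p,q)$ overall, at most one is section-decided, so some decided value $y_0$ is not section-decided) and with the observation that the projection $(p,q_0)$ of a condition deciding such a $y_0$ remains undecided; from there your argument coincides with the paper's second case. One bookkeeping correction: Case B does not use ``only the projection and refinement properties.'' To conclude that $(p,q_0)$ would have to decide the value $y_0$, you need $(a_0,q_0)\,R\,(p,q_0)$, i.e.\ that the deciding condition actually extends the projected one, and that is precisely basedness applied to $a_0\,b(R)\,p$; the same issue does not arise in the paper's second case because there the case hypothesis itself rules out any decision by $(p,q''')$. Since basedness is part of iteration-likeness this is harmless, but it should be cited. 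On balance your decomposition is arguably cleaner: it isolates the mixing property in a one-step case and concentrates all remaining work in the projection/refinement/basedness case, whereas the paper's mixing case is the longer of its two.
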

	
	\begin{proof}
		For better readability, we prove the result in a series of statements, showing where we apply which property. We let PP stand for the projection property, RP for the refinement property and MP for the mixing property. Let BS stand for basedness. We check two cases:
		
		Assume there exist $q_0,y_0$ such that $(p,q_0)R(p,q)$ and $(p,q_0)\Vdash\dot{x}=\check{y}_0$.
		\begin{align*}
			(1)\;\; & \exists q_0,y_0(((p,q_0) R (p,q))\wedge ((p,q_0)\Vdash\dot{x}=\check{y}_0)) \\
			\;\;& \exists (p',q'''),y_1\neq y_0 (((p',q''') R (p,q))\wedge ((p',q''')\Vdash\dot{x}=\check{y}_1) \\
			(PP)\;\; & \exists q_1((p,q_1) R (p,q)\wedge ((p',q_1) R (p',q'''))) \\
			\;\;& (p',q_1)\Vdash\dot{x}=\check{y}_1 \\
			\;\;& p' (b(R)) p  \\
			(BS)\;\;& ((p',q_0) R (p,q_0)) \\
			\;\;& (p', q_0)\Vdash\dot{x}=\check{y}_0 \\
			(RP)\;\; & ((p',q_0)R(p',q))\wedge((p',q_1)R(p',q)) \\
			(MP)\;\; & \exists p_0,p_1,q'(((p',q') R (p',q))\wedge((p_0,q') R (p',q_0))\wedge((p_1,q') R (p',q_1))) \\
			\;\;& p_0,p_1 (b(R)) p' \\
			(BS)\;\; & (p',q') R (p',q) R (p,q) \\
			(PP)\;\; & \exists q'' ((p,q'') R (p,q)) \wedge ((p',q'') R (p',q') R (p',q'')) \\
			(RP)\;\; & ((p_0,q'') R (p_0,q') R (p',q_0))\wedge((p_1,q'') R (p_1,q') R (p',q_1)) \\
			\;\;& ((p_0,q'')\Vdash\dot{x}=\check{y}_0)\wedge((p_1,q'')\Vdash\dot{x}=\check{y}_1)
		\end{align*}
		
		Assume case (1) does not hold, i.e. for all $q_0,y_0$ with $(p,q_0) R(p,q)$, $(p,q_0)\not\Vdash\dot{x}=\check{y}_0$.
		\begin{align*}
			(2)\;\; & \forall q_0,y_0(((p,q_0) R (p,q))\rightarrow((p,q_0)\not\Vdash\dot{x}=\check{y}_0)) \\
			\;\; & \exists (p_0,q'),y_0 (((p_0,q')R(p,q))\wedge((p_0,q')\Vdash\dot{x}=\check{y}_0)) \\
			(PP) \;\; & \exists q'''(((p,q''') R (p,q))\wedge ((p_0,q''') R (p_0,q') R (p_0,q'''))) \\
			\;\; & ((p,q''')\not\Vdash\dot{x}=\check{y}_0) \\
			\;\; & \exists(p_1,q''''), y_1\neq y_0(((p_1,q'''') R (p,q'''))\wedge((p_1,q'''')\Vdash\dot{x}=\check{y}_1)) \\
			(PP)\;\; & \exists q''((p,q'') R(p,q''')\wedge((p_1,q'') R (p_1,q'''') R (p_1,q''))) \\
			\;\; & ((p_1,q'')\Vdash\dot{x}=\check{y}_1) \\
			\;\; & p_0,p_1 (b(R)) p \\
			\;\; & (p,q'')R(p,q''')R(p,q) \\
			(RP)\;\; & (p_0,q'') R (p_0,q''') R (p_0,q') \\
			\;\; & ((p_0,q'')\Vdash\dot{x}=\check{y}_0)
		\end{align*}
		
	\end{proof}
	
	Now we can finish the proof of Theorem \ref{ApproxProp}.
	
	\begin{proof}[Proof of Theorem \ref{ApproxProp}]
		Let $\dot{f}$ be a $(\dP\times\dQ,R)$-name for a function such that some $(p,q)$ forces $\dot{f}\notin V$ and $\dot{f}\uhr \check{u}\in V$ for every $u\in[V]^{<\delta}\cap V$. We will construct a winning strategy for INC in the completeness game of length $\delta$ played on $(\dQ,s(R,p))\uhr q$. In any run $(q_{\gamma})_{\gamma\in\delta}$ of the game, we will construct $(p_{\gamma}^0,p_{\gamma}^1,y_{\gamma})_{\gamma\in\Odd\cap\delta}$ such that
		\begin{enumerate}
			\item $y_{\gamma}\in [V]^{<\delta}\cap V$ and the sequence $(y_{\gamma})_{\gamma\in\Odd\cap\delta}$ is $\subseteq$-increasing
			\item $p_{\gamma}^0,p_{\gamma}^1 b(R) p$
			\item $(p_{\gamma}^0,q_{\gamma})$ and $(p_{\gamma}^1,q_{\gamma})$ decide $\dot{f}\uhr \check{y}_{\alpha}$ equally for any odd $\alpha<\gamma$, but differently for $\alpha=\gamma$
		\end{enumerate}
		Assume the game has been played until some even ordinal $\gamma<\delta$. Let $y_{\gamma+1}':=\bigcup_{\alpha\in\gamma\cap\Odd}y_{\alpha}$, which has size $<\delta$. $\dot{f}\uhr y_{\gamma+1}'$ is forced to be in $V$, so we can find $(p_{\gamma+1}',q_{\gamma+1}') R (p,q_{\gamma})$ which decides $\dot{f}\uhr y_{\gamma+1}'$. By the projection property, we can find $q_{\gamma+1}''$ such that $(p,q_{\gamma+1}'') R (p,q_{\gamma})$ and $(p_{\gamma+1}',q_{\gamma+1}'') R (p_{\gamma+1}',q_{\gamma+1}')$, so $(p_{\gamma+1}',q_{\gamma+1}'')$ also decides $\dot{f}\uhr\check{y}_{\gamma+1}'$. Because $\dot{f}$ is forced to be outside of $V$, there is $\beta$ such that $(p_{\gamma+1}',q_{\gamma+1}'')$ does not decide $\dot{f}(\check{\beta})$. Let $y_{\gamma+1}:=y_{\gamma+1}'\cup\{\beta\}$. Then $(p_{\gamma+1}',q_{\gamma+1}'')$ does not decide $\dot{f}\uhr\check{y}_{\gamma+1}$, so we find $q_{\gamma+1}'''$ and $p_{\gamma+1}^0,p_{\gamma+1}^1$ such that $(p_{\gamma+1}',q_{\gamma+1}''') R (p_{\gamma+1}',q_{\gamma+1}'')$ and $(p_{\gamma+1}^0,q_{\gamma+1}''')$ and $(p_{\gamma+1}^1,q_{\gamma+1}''')$ decide $\dot{f}\uhr \check{y}_{\gamma+1}$ differently. Lastly, use the projection property to obtain $q_{\gamma+1}$ such that $(p,q_{\gamma+1}) R (p,q_{\gamma+1}'')$ and $(p_{\gamma+1}',q_{\gamma+1}) R (p_{\gamma+1}',q_{\gamma+1}''')$. It follows that these objects are as required.
		
		Lastly, assume this strategy does not win, i.e. there is a game of length $\delta$. In this case, we claim that $\{(p_{\gamma}^0,p_{\gamma}^1)\;|\;\gamma\in\Odd\}$ is an antichain in $(\dP,b(R))^2$, obtaining a contradiction. Assume $(p^0,p^1) (b(R)^2) (p_{\gamma}^0,p_{\gamma}^1),(p_{\gamma'}^0,p_{\gamma'}^1)$ with $\gamma'<\gamma$. Because $p^0 (b(R)) p_{\gamma}^0$ and $p^1 (b(R)) p_{\gamma}^1$, $(p^0,q_{\gamma}) R (p_{\gamma}^0,q_{\gamma})$ and $(p^1,q_{\gamma}) R(p_{\gamma}^1,q_{\gamma})$ decide $\dot{f}\uhr\check{y}_{\gamma'}$ equally, but because $p^0 (b(R)) p_{\gamma'}^0$ and $p^1 (b(R)) p_{\gamma'}^1$, $(p^0,q_{\gamma}) R(p_{\gamma'}^0,q_{\gamma}) R (p_{\gamma'}^0, q_{\gamma'})$ (using the refinement property) and $(p^1,q_{\gamma}) R(p_{\gamma'}^1,q_{\gamma})$ $R (p_{\gamma'}^1,q_{\gamma'})$ decide $\dot{f}\uhr\check{y}_{\gamma'}$ differently, a contradiction.
	\end{proof}
	
	We obtain an interesting statement regarding strongly distributive forcings (which contrasts the fact that consistently there can exist a ${<}\,\omega_1$-distributive, ccc. poset).
	
	\begin{mycol}
		Let $\dP$ be a nontrivial poset and $\delta$ a cardinal. Then one of the following holds:
		\begin{enumerate}
			\item $\dP$ is not $\delta$-cc.
			\item $\dP$ is not strongly ${<}\,\delta$-distributive.
		\end{enumerate}
	\end{mycol}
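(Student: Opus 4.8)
The plan is to prove the contrapositive: assuming $\dP$ is nontrivial, $\delta$-cc and strongly ${<}\,\delta$-distributive, I would derive a contradiction by producing an antichain of size $\delta$ in $\dP$ itself. I treat the essential case in which $\delta$ is regular (regularity is needed only for the bookkeeping below). The tool is the game characterization of strong distributivity quoted earlier: $\dP$ is strongly ${<}\,\delta$-distributive exactly when INC has no winning strategy in the completeness game $G(\dP,\delta)$. The idea is to write down an explicit strategy $\sigma$ for INC whose every surviving run (a run of length $\delta$ in which COM always manages to move) codes a $\delta$-sized antichain; since $\sigma$ cannot be winning, such a surviving run must exist, and the antichain it produces contradicts the $\delta$-cc.

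First I would fix the object to be split. Strong ${<}\,\delta$-distributivity implies ${<}\,\delta$-distributivity (a winning strategy for INC in a game of length $\alpha<\delta$ would also win the game of length $\delta$), so forcing with $\dP$ adds no new subsets of ground-model sets of size ${<}\,\delta$. Since $\dP$ is nontrivial, pick $r_0\in\dP$ and a $\dP$-name $\dot x$ for a subset of some ordinal $\eta$ with $r_0\Vdash\dot x\notin V$. Then for every $z\in([\eta]^{<\delta})^V$ the value $\dot x\cap z$ is forced into $V$.

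Next I define $\sigma$. Along a run, at each odd stage $\gamma$ INC sees COM's last condition $q$ and proceeds as in the proof of Theorem \ref{ApproxProp}: let $y_\gamma':=\bigcup_{\beta\in\gamma\cap\Odd}y_\beta$, which has size ${<}\,\delta$ because $\delta$ is regular; since $\dot x\cap y_\gamma'$ is forced into $V$, choose $q'\le q$ (and $q'\le r_0$) deciding $\dot x\cap y_\gamma'$; because $q'\Vdash\dot x\notin V$, some ordinal $\beta^*_\gamma<\eta$ is left undecided by $q'$, so set $y_\gamma:=y_\gamma'\cup\{\beta^*_\gamma\}$ and pick $p_\gamma^0,p_\gamma^1\le q'$ deciding ``$\beta^*_\gamma\in\dot x$'' in opposite ways. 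INC plays $p_\gamma^0$ and records $p_\gamma^1$. As INC has no winning strategy, $\sigma$ is defeated: there is a run of length $\delta$ following $\sigma$ in which COM supplies lower bounds at all even and limit stages.

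Finally I would check that $\{p_\gamma^1\;|\;\gamma\in\delta\cap\Odd\}$ is an antichain of size $\delta$. For odd $\gamma<\gamma'$, the condition $p_{\gamma'}^1$ lies below COM's move at stage $\gamma'-1$, hence below $p_\gamma^0$; so it decides ``$\beta^*_\gamma\in\dot x$'' the same way as $p_\gamma^0$, i.e.\ oppositely to $p_\gamma^1$, whence $p_\gamma^1\perp p_{\gamma'}^1$. This antichain has size $\delta$, contradicting the $\delta$-cc. The point I expect to be the crux — and the one genuine improvement over simply invoking Theorem \ref{ApproxProp} — is keeping $p_\gamma^0$ on the main descending line rather than off to the side: in the proof of Theorem \ref{ApproxProp} both split conditions are separated from the working sequence and the resulting antichain lives in $(\dP,b(R))^2$, whereas here the one-poset setting lets one branch stay on the diagonal, so the antichain lands in $\dP$ and only ordinary $\delta$-cc (not square-$\delta$-cc) is contradicted. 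The remaining care is the bookkeeping ensuring $|y_\gamma'|<\delta$, which is exactly where regularity of $\delta$ enters.
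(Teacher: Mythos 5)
Your proof is correct, but it takes a genuinely different route from the paper's. The paper deduces the corollary in a few lines from machinery it already has: nontriviality plus ${<}\,\delta$-distributivity shows that $\dP$ fails the ${<}\,\delta$-approximation property; then, citing Lemma 3.7 of \cite{JakobDisjointInterval} (a strongly ${<}\,\delta$-distributive forcing preserves its own $\delta$-cc.), it concludes that $\dP\times\dP$ is $\delta$-cc., so Theorem \ref{ApproxProp} -- applied to $\dP$ viewed as an iteration-like order on $\dP\times\{0\}$ with trivial term part -- gives the ${<}\,\delta$-approximation property, a contradiction. You instead rerun the game argument underlying Theorem \ref{ApproxProp} directly in the one-poset setting, and the observation you flag as the crux is exactly right: by keeping the condition $p^0_\gamma$ on the main descending line and only setting $p^1_\gamma$ aside, the conditions $p^1_{\gamma'}$ for later odd $\gamma'$ automatically lie below $p^0_\gamma$, so the final antichain $\{p^1_\gamma\;|\;\gamma\in\Odd\cap\delta\}$ lives in $\dP$ itself rather than in $\dP^2$. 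Consequently you need only the ordinary $\delta$-cc., and you never invoke Theorem \ref{ApproxProp} or the self-preservation lemma. The paper's argument is shorter given its black boxes; yours is self-contained and isolates a cleaner combinatorial fact (a nontrivial strongly ${<}\,\delta$-distributive poset always carries an antichain of size $\delta$), at the cost of redoing the splitting construction by hand.

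One small repair is needed to match the statement, which allows an arbitrary cardinal $\delta$: your restriction to regular $\delta$ is in fact unnecessary. Each $y_\gamma$ adds exactly one ordinal $\beta^*_\gamma$ to $y'_\gamma$, so by induction $|y'_\gamma|\le|\gamma\cap\Odd|\le|\gamma|<\delta$ for every $\gamma<\delta$, regardless of the cofinality of $\delta$; the bookkeeping never uses regularity (the same remark applies to the paper's own proof of Theorem \ref{ApproxProp}). Alternatively, the singular case reduces to the regular one: by the Erd\H{o}s--Tarski theorem the least $\kappa$ for which $\dP$ is $\kappa$-cc.\ is regular, and strong ${<}\,\delta$-distributivity implies strong ${<}\,\kappa$-distributivity for all $\kappa\le\delta$. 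Either fix should be stated explicitly.
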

	
	\begin{proof}
		Assume to the contrary that none of the above holds. As $\dP$ is nontrivial, it adds some set $x$ of ordinals. Because $\dP$ is in particular ${<}\,\delta$-distributive, $x\cap y\in V$ for every $y\in[x]^{<\delta}\cap V$. Thus $\dP$ does not have the ${<}\,\delta$-approximation property. On the other hand, $\dP$ preserves the $\delta$-cc. of itself by virtue of being strongly ${<}\,\delta$-distributive (by \cite{JakobDisjointInterval}, Lemma 3.7), so $\dP\times\dP$ has the $\delta$-cc. and $\dP$ has the ${<}\,\delta$-approximation property (by viewing $\dP$ as an order on the product $\dP\times\{0\}$), a contradiction.
	\end{proof}
	
	\section{Ineffability Witnesses}
	
	We will now relate the existence of ineffable branches for slender lists to the existence of powerful elementary submodels of $H(\Theta)$, similarly to \cite{HolyLNSmallEmbeddingLargeCard}.
	
	\begin{mysen}\label{IneffChara}
		Let $\kappa<\lambda$ be cardinals. The following are equivalent:
		\begin{enumerate}
			\item $\ISP(\kappa,\lambda)$ holds.
			\item For all sufficiently large cardinals $\Theta$ and every $(\kappa,\lambda)$-list $f$, there is $M\prec H(\Theta)$ of size ${<}\,\kappa$ with $M\cap\kappa\in\kappa$ such that $\kappa,\lambda,f\in M$ and for some $b\in M$, $b\cap M=f(M\cap\lambda)$.
		\end{enumerate}
	\end{mysen}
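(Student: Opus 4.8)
The plan is to treat both implications as translations between stationarity in $[\lambda]^{<\kappa}$ and the combinatorics of elementary submodels of $H(\Theta)$, mediated by the projection $M\mapsto M\cap\lambda$. Since $\kappa$ is regular (as required for $(\kappa,\lambda)$-lists), I will invoke three standard facts: (a) the collection of $M\in[H(\Theta)]^{<\kappa}$ with $M\prec H(\Theta)$, $\kappa,\lambda,f\in M$ and $M\cap\kappa\in\kappa$ is club (for the last clause, closing $M$ under the function sending each $\beta<\kappa$ to its set of predecessors forces $M\cap\kappa$ to be a downward-closed subset of $\kappa$ of size ${<}\,\kappa$, hence an ordinal); (b) if $T\subseteq[\lambda]^{<\kappa}$ is stationary then $\{M\in[H(\Theta)]^{<\kappa}:M\cap\lambda\in T\}$ is stationary in $[H(\Theta)]^{<\kappa}$; and (c) if $M\prec H(\Theta)$ with $M\cap\kappa\in\kappa$ and $C\in M$ is a club subset of $[\lambda]^{<\kappa}$, then $M\cap\lambda\in C$. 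The bookkeeping identity underlying everything is that for $b\subseteq\lambda$ with $b\in M$ and $z=M\cap\lambda$ we have $b\cap M=b\cap z=z\cap b$, so ``$b\cap M=f(z)$'' is literally the assertion $z\in T_b$, where $T_b:=\{z'\in[\lambda]^{<\kappa}:z'\cap b=f(z')\}$.

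For (1)$\Rightarrow$(2), I fix a large $\Theta$ and a list $f$, and let $b$ be an ineffable branch, so $T_b$ is stationary in $[\lambda]^{<\kappa}$. By fact (b) the set of $M$ with $M\cap\lambda\in T_b$ is stationary in $[H(\Theta)]^{<\kappa}$, while by fact (a) the good models (now also demanding $b\in M$) form a club. Intersecting the two, I obtain $M\prec H(\Theta)$ of size ${<}\,\kappa$ with $\kappa,\lambda,f,b\in M$, $M\cap\kappa\in\kappa$, and $M\cap\lambda\in T_b$. The identity above then yields $b\cap M=f(M\cap\lambda)$, so $b$ is exactly the required witness.

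For (2)$\Rightarrow$(1), I argue by contradiction. Suppose $f$ is a list with \emph{no} ineffable branch, so $T_b$ is nonstationary for every $b\subseteq\lambda$. I choose $\Theta$ large enough that $H(\Theta)$ correctly computes clubs on $[\lambda]^{<\kappa}$ and their nonstationarity, and apply (2) to $f$, obtaining a good $M\prec H(\Theta)$ and some $b\in M$ with $b\cap M=f(M\cap\lambda)$; writing $z=M\cap\lambda$, this says $z\in T_b$. Now $b,f\in M$ and $H(\Theta)\models$ ``$T_b$ is nonstationary'', so by elementarity there is $C\in M$ which $M$, and hence (by the choice of $\Theta$) genuinely $V$, regards as a club in $[\lambda]^{<\kappa}$ disjoint from $T_b$. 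By fact (c) we get $z=M\cap\lambda\in C$, contradicting $C\cap T_b=\emptyset$ together with $z\in T_b$.

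The step I expect to be most delicate is fact (c), which is precisely where the hypothesis $M\cap\kappa\in\kappa$ earns its place: without the regularity of $\kappa$ feeding into the closure of $M\cap\lambda$, the model's section $M\cap\lambda$ need not be a closure point of a club $C\in M$, and the contradiction in (2)$\Rightarrow$(1) would evaporate. The agreeable feature of the argument is that I never have to insert an externally chosen sequence of non-reflecting clubs into $M$ as a parameter; applying elementarity to the \emph{single} parameter $b$, which (2) already places inside $M$, manufactures the witnessing club $C\in M$ internally, and this is what allows the proof to proceed from the literal statement of (2).
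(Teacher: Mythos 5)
Your proposal is correct and follows essentially the same route as the paper: for (1)$\Rightarrow$(2) you lift the stationary set $T_b$ to $[H(\Theta)]^{<\kappa}$ and intersect with the club of suitable elementary submodels containing $b$, and for (2)$\Rightarrow$(1) you use elementarity to extract a club $C\in M$ disjoint from $T_b$ and derive the contradiction $M\cap\lambda\in C$. The only cosmetic difference is that the paper proves your fact (c) inline via a Menas-style function $F\in M$ whose closure points lie in $C$ (which is exactly where $M\cap\kappa\in\kappa$ is used, as you correctly flag), and it concludes directly that the given $b$ is an ineffable branch rather than arguing by contradiction from the nonexistence of any branch.
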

	
	\begin{proof}
		Assume $\kappa$ is $\lambda$-ineffable and $f$ is a $(\kappa,\lambda)$-list. By $\lambda$-ineffability there is $b\subseteq\lambda$ such that the set
		$$S:=\{x\in[\lambda]^{<\kappa}\;|\;f(x)=b\cap x\}$$
		is stationary in $[\lambda]^{<\kappa}$. Using standard techniques we find $M\prec H(\Theta)$ with $\kappa,\lambda,f,b\in M$ such that $M\cap\kappa\in\kappa$ and $M\cap\lambda\in S$. Clearly $b\cap M=f(M\cap\lambda)$.
		
		Assume the submodel property holds and let $f$ be a $(\kappa,\lambda)$-list. Find $M\prec H(\Theta)$ of size $<\kappa$ such that $\kappa,\lambda,f\in M$ and for some $b\in M$, $b\cap M=f(M\cap\lambda)$. We will show that $b$ is an ineffable branch for $f$. If $b$ is not an ineffable branch, by elementarity, there is $C\in M$ club in $[\lambda]^{<\kappa}$ such that for every $x\in C$, $f(x)\neq x\cap b$. $M$ contains a function $F$ such that every $x\in[\lambda]^{<\kappa}$ closed under $F$ is in $C$. Because $F\in M\prec H(\Theta)$, $M\cap\lambda$ is closed under $F$, so $M\cap\lambda\in C$ but $f(M\cap\lambda)=M\cap b$, a contradiction.
	\end{proof}
	
	Note that the previous Theorem works ``list-by-list'', i.e. for any $(\kappa,\lambda)$-list $f$, $f$ has an ineffable branch if and only if there is $M\prec H(\Theta)$ of size ${<}\,\kappa$ with $M\cap\kappa\in\kappa$ such that $\kappa,\lambda,f\in M$ and for some $b\in M$, $b\cap M=f(M\cap\lambda)$.
	
	We fix a definition corresponding to the previous Lemma:
	
	\begin{mydef}
		Let $\kappa\leq\lambda$ be cardinals and $f$ a $(\kappa,\lambda)$-list. Let $\Theta$ be large. $M\in[H(\Theta)]^{<\kappa}$ is a \emph{$\lambda$-ineffability witness for $\kappa$ with respect to $f$} if $M\prec H(\Theta)$, $M\cap\kappa\in\kappa$, $\{f,\kappa,\lambda\}\subseteq M$, and there is $b\in M$ such that $b\cap M=f(M\cap\lambda)$.
	\end{mydef}
	
	So in particular, we have shown that a $(\kappa,\lambda)$-list has an ineffable branch if and only if there is a $\lambda$-ineffability witness for $\kappa$ with respect to $f$. This enables us to show $\ISP(\delta,\kappa,\lambda)$ in the following way: Given a name $\dot{f}$ for a $\delta$-slender $(\kappa,\lambda)$-list, we construct (in the ground model) a related $(\kappa,\lambda)$-list $e$ (using the slenderness of $\dot{f}$) such that, given a $\lambda$-ineffability witness $M$ for $\kappa$ with respect to $e$ in the ground model, $M[G]$ is a $\lambda$-ineffability witness for $\kappa$ with respect to $\dot{f}^G$.
	
	Later, we will use witnesses with additional properties:
	
	\begin{mylem}\label{IneffEmbed}
		Let $\kappa<\lambda=\lambda^{<\kappa}$ be cardinals and assume $\kappa$ is $\lambda$-ineffable. For all sufficiently large cardinals $\Theta$, every $x\in[H(\Theta)]^{<\kappa}$ and every $(\kappa,\lambda)$-list $f$, there is $M\prec H(\Theta)$ such that the following holds:
		\begin{enumerate}
			\item $M\cap\kappa\in\kappa$ is inaccessible and $[M\cap\lambda]^{<M\cap\kappa}\subseteq M$
			\item $x\subseteq M$
			\item $\kappa,\lambda,f\in M$ and for some $b\in M$, $b\cap M=f(M\cap\lambda)$.
		\end{enumerate}
	\end{mylem}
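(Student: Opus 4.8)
The plan is to reduce the statement to locating a single, sufficiently closed elementary submodel inside the stationary guessing set coming from $\lambda$-ineffability, and to manufacture that submodel as the range of a small elementary embedding in the spirit of \cite{HolyLNSmallEmbeddingLargeCard}. First I would apply $\lambda$-ineffability---which by definition is just $\ISP(\kappa,\lambda)$ together with the inaccessibility of $\kappa$---to the given list $f$, obtaining a branch $b\subseteq\lambda$ for which $S:=\{a\in[\lambda]^{<\kappa}\mid f(a)=b\cap a\}$ is stationary in $[\lambda]^{<\kappa}$. The lemma then reduces to finding $M\prec H(\Theta)$ satisfying (1) and (2), with $\{\kappa,\lambda,f,b\}\cup x\subseteq M$ and $M\cap\lambda\in S$; for such an $M$ we have $b\in M$ and $b\cap M=f(M\cap\lambda)$, which is exactly (3).

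Before constructing $M$ I would isolate the obstacle. In Theorem \ref{IneffChara} a witness was produced simply by intersecting the \emph{club} $\{M\mid M\cap\kappa\in\kappa\}$ with the stationary set $\{M\mid M\cap\lambda\in S\}$. Here, however, (1) additionally requires $M\cap\kappa$ to be \emph{inaccessible} and $[M\cap\lambda]^{<M\cap\kappa}\subseteq M$, and both of these are merely stationary (not club) conditions on $M$; a club-meets-stationary argument therefore breaks down, and one genuinely needs more than the bare combinatorial form of $\lambda$-ineffability.

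To overcome this I would pass to the embedding form of $\lambda$-ineffability and extract an elementary embedding $j\colon\bar M\to H(\Theta)$ with $\bar M$ transitive of size ${<}\,\kappa$, $\crit(j)=\bar\kappa$ inaccessible, $j(\bar\kappa)=\kappa$, such that $\bar M$ is closed under ${<}\,\bar\kappa$-sequences (so that $[\bar\lambda]^{<\bar\kappa}\subseteq\bar M$ for $\bar\lambda:=j^{-1}(\lambda)$), the prescribed set $x$ together with $\kappa,\lambda,f$ lies in $\operatorname{ran}(j)$, and $j$ carries an ineffability guess: there is $b\in\operatorname{ran}(j)$ with $f(j[\bar\lambda])=b\cap j[\bar\lambda]$. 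Setting $M:=\operatorname{ran}(j)$, the verification is then routine and is the part I would carry out in detail. Since $\crit(j)=\bar\kappa$ we get $M\cap\kappa=j[\bar\kappa]=\bar\kappa\in\kappa$, inaccessible by elementarity. For the closure clause of (1), given $z\in[M\cap\lambda]^{<\bar\kappa}$ its pointwise preimage $\bar z:=\{j^{-1}(\xi)\mid\xi\in z\}$ is a subset of $\bar\lambda$ of size ${<}\,\bar\kappa=\crit(j)$, hence $\bar z\in\bar M$ by closure of $\bar M$, and then $z=j[\bar z]=j(\bar z)\in\operatorname{ran}(j)=M$ since $|\bar z|<\crit(j)$. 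Item (2) and $\kappa,\lambda,f\in M$ hold by the choice of $j$, and because $M\cap\lambda=j[\bar\lambda]$ the guessing clause gives $b\cap M=f(M\cap\lambda)$, establishing (3).

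The real work, and the step I expect to be the main obstacle, is producing the embedding $j$ with a simultaneously ${<}\,\bar\kappa$-closed transitive domain \emph{and} the branch-guessing property; this is the analogue for $\delta$-slender lists of the small embedding characterizations of \cite{HolyLNSmallEmbeddingLargeCard}. It is exactly here that the two hypotheses are used: $\lambda=\lambda^{<\kappa}$ ensures that $[\lambda]^{<\kappa}$ and $f$ are coded within a structure of size $\lambda$, so that a transitive domain $\bar M$ of size ${<}\,\kappa$ which is closed under ${<}\,\bar\kappa$-sequences and computes $\bar\lambda$ and $j^{-1}(f)$ correctly can be found; and the ineffability of $\kappa$---as opposed to the weaker assertion that every $(\kappa,\lambda)$-list merely has some branch---is precisely what supplies the guess $b\in\operatorname{ran}(j)$. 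Equivalently, in the purely combinatorial formulation, the crux is that the dual ineffable filter on $[\lambda]^{<\kappa}$ concentrates on those $a$ whose associated Skolem hull is ${<}(a\cap\kappa)$-closed with $a\cap\kappa$ inaccessible, so that $S$ is forced to meet this set; verifying this concentration is the heart of the matter.
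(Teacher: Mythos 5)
Your reduction and the closing verification that $M:=\operatorname{ran}(j)$ has properties (1)--(3) are fine, but the proposal stops exactly where the content of the lemma begins. Everything hinges on the existence of the embedding $j$ --- equivalently, on your final claim that the ineffability filter ``concentrates'' on those $a$ whose hull $M$ has $M\cap\kappa$ inaccessible and satisfies $[M\cap\lambda]^{<M\cap\kappa}\subseteq M$. You correctly observe that the club-meets-stationary argument from Theorem \ref{IneffChara} breaks down for these requirements, you label the production of $j$ ``the real work'' and ``the heart of the matter,'' and then you neither carry it out nor cite a precise statement that yields it. The only form of $\lambda$-ineffability available here is the combinatorial one (every $(\kappa,\lambda)$-list has an ineffable branch), so an ``embedding form'' with a transitive, ${<}\,\bar\kappa$-closed domain and inaccessible critical point is not something you may simply extract: it is equivalent to what is to be proved. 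As written, the proposal is a correct reformulation of the lemma, not a proof of it.

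The paper closes precisely this gap with a list-modification trick, which is the ingredient your outline is missing. Fix a bijection $F\colon\lambda\to[\lambda]^{<\kappa}$ (this is where $\lambda^{<\kappa}=\lambda$ enters) and replace $f$ by the auxiliary list $g$ defined by: $g(a)$ is a cofinal subset of $a\cap\kappa$ of ordertype ${<}\,a\cap\kappa$ if $a\cap\kappa$ is a singular ordinal; $g(a)$ is an element of $[a]^{<a\cap\kappa}\smallsetminus F[a]$ if $a\cap\kappa$ is regular and such an element exists; and $g(a)=f(a)$ otherwise. Apply ineffability (together with the refinement, from the proof of Theorem \ref{IneffChara}, that $x\subseteq M$ and $M\cap\kappa$ is a strong limit) to $g$, obtaining $M$ and $b\in M$ with $b\cap M=g(M\cap\lambda)$. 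If $a:=M\cap\lambda$ fell into either bad case, the guessed value $g(a)=b\cap M$ could be reconstructed inside $M$: since $b\in M$ and the relevant ordertype lies below $M\cap\kappa\subseteq M$, the increasing enumeration of $b$ restricted to that ordertype is in $M$, so $g(a)\in M$. Elementarity then makes $M$ see either a short cofinal subset of $\kappa$ (contradicting regularity of $\kappa$) or an element of $[\lambda]^{<\kappa}$ outside the range of $F$ (contradicting surjectivity of $F$). Hence both bad cases are excluded, so $M\cap\kappa$ is regular (hence inaccessible), $[M\cap\lambda]^{<M\cap\kappa}\subseteq F[M\cap\lambda]\subseteq M$, and $g(M\cap\lambda)=f(M\cap\lambda)$, so $b$ witnesses (3). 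This argument is exactly the verification of your ``concentration'' claim; without it (or an equivalent), the proof is incomplete.
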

	
	\begin{proof}
		Let $F\colon\lambda\to[\lambda]^{<\kappa}$ be a bijection. We modify $f$ to obtain the following $(\kappa,\lambda)$-list $g$:
		\begin{enumerate}
			\item $g(a):=\emptyset$ if $a\cap\kappa$ is not an ordinal.
			\item $g(a)$ is a cofinal subset of $a\cap\kappa$ of ordertype ${<}\,a\cap\kappa$ if $a\cap\kappa$ is a singular ordinal.
			\item $g(a)$ is an element of $[a]^{<a\cap\kappa}\smallsetminus F[a]$ if $a\cap\kappa$ is a regular ordinal and such an element exists.
			\item $g(a):=f(a)$ otherwise.
		\end{enumerate}
		Since $\kappa$ is $\lambda$-ineffable, there is $M\prec H(\Theta)$ of size ${<}\,\kappa$ such that $\kappa,\lambda,f\in M$, $M\cap\kappa\in\kappa$ and for some $b\in M$, $M\cap b=f(M\cap\lambda)$. By the proof of Theorem \ref{IneffChara} we can assume $x\subseteq M$ and that $M\cap\kappa$ is a strong limit. We will verify that only the last case can hold for $a:=M\cap\lambda$ (with the first case being excluded by assumption).
		\begin{enumerate}
			\setcounter{enumi}{1}
			\item Assume $a\cap\kappa$ is singular. Let $g\in M$ be a function enumerating $b$ (in ascending order). Then, as $\cf(a\cap\kappa)<a\cap\kappa$, $g[\cf(a\cap\kappa)]=f(M\cap\lambda)\in M$, so $M$ says that $\kappa$ is singular, a contradiction.
			\item Assume $[a]^{<a\cap\kappa}\smallsetminus F[a]$ is nonempty. Then $g(a)$ is a subset of $M\cap\lambda$ of ordertype ${<}\,a\cap\kappa$. Proceed as before to show that $g(a)\in M$, which implies $g(a)\in F[a]$ by elementarity, a contradiction.
		\end{enumerate}
		Thus, $b\cap M=g(M\cap\lambda)=f(M\cap\lambda)$. By elementarity, $M\supseteq F[a]=[a]^{<a\cap\kappa}$, so we are done.
	\end{proof}
	
	We also need two results about the interaction of $\lambda$-ineffability witnesses and forcing: Given any set $M$ as well as any forcing order $\dP$ and $\dP$-generic filter $G$, let $M[G]$ consist of $\sigma_G$ for all $\sigma\in M$ that are $\dP$-names. Note that if $\dP\in M$ and $M\prec H(\Theta)$ for some large enough $\Theta$, $\Gamma_{\dP}\in M$ and thus $G=\Gamma_{\dP}^G\in M[G]$.
	
	The following is shown in \cite{ShelahProperImproper}, chapter III, theorem 2.11:
	
	\begin{mylem}
		If $\dP\in M\prec H(\Theta)$ and $G$ is $\dP$-generic, $M[G]\prec H(\Theta)^{V[G]}$.
	\end{mylem}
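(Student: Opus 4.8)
The plan is to verify the Tarski--Vaught criterion for $M[G]$ as a substructure of $H(\Theta)^{V[G]}$, converting an elementarity statement in the extension into a statement about names living in the ground-model structure $H(\Theta)$, which $M$ already reflects. The engine for this conversion is the forcing theorem together with the maximum principle (fullness).

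First I would dispose of two routine points. For $\Theta$ sufficiently large (regular, with $H(\Theta)$ modelling a large enough fragment of $\ZFC$ and $\dP\in H(\Theta)$) one has $H(\Theta)^{V[G]}=(H(\Theta)^V)[G]$, so that the forcing relation for formulas interpreted in $H(\Theta)^{V[G]}$ with names drawn from $H(\Theta)^V$ is definable inside $H(\Theta)^V$. Moreover $M[G]\subseteq H(\Theta)^{V[G]}$, since every element of $M[G]$ has the form $\sigma_G$ for some $\dP$-name $\sigma\in M\subseteq H(\Theta)$, and such evaluations land in $(H(\Theta)^V)[G]$.

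The heart of the argument is the Tarski--Vaught step. Fix a formula $\varphi(x,\vec y)$ and parameters $\vec a\in M[G]$, say $\vec a=\vec\sigma_G$ with $\vec\sigma\in M$ a tuple of $\dP$-names, and assume $H(\Theta)^{V[G]}\models\exists x\,\varphi(x,\vec a)$; I must produce a witness in $M[G]$. Working in $H(\Theta)^V$, the maximum principle yields a single $\dP$-name $\mu$ with $1_{\dP}\Vdash(\exists x\,\varphi(x,\vec\sigma))\to\varphi(\mu,\vec\sigma)$. The assertion ``there exists such a name $\mu$'' is a first-order statement over $H(\Theta)$ whose only parameters are $\dP$ and $\vec\sigma$, all lying in $M$; hence by $M\prec H(\Theta)$ there is already such a $\mu\in M$. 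Then $\mu_G\in M[G]$, and since $\exists x\,\varphi(x,\vec a)$ holds in the extension it is forced by some condition in $G$, so the forced implication gives $H(\Theta)^{V[G]}\models\varphi(\mu_G,\vec a)$. Thus $\mu_G$ is the required witness, and Tarski--Vaught yields $M[G]\prec H(\Theta)^{V[G]}$.

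The only place demanding real care is the interface between $H(\Theta)$ and the extension: one must choose $\Theta$ large enough that $H(\Theta)^{V[G]}$ genuinely equals $(H(\Theta)^V)[G]$ and that both the forcing theorem and the maximum principle are available inside $H(\Theta)^V$ viewed as a model of a sufficient fragment of set theory. Once that is secured, the substance of the lemma is the standard observation that the maximum principle furnishes a name behaving like a definable Skolem function, so that the elementarity of $M$ in the ground model transfers directly to elementarity of $M[G]$ in the extension.
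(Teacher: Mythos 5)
Your proof is correct and is essentially the canonical argument: the paper itself does not prove this lemma but cites it to Shelah (\emph{Proper and Improper Forcing}, Ch.~III, Thm.~2.11), and the proof there is exactly this Tarski--Vaught/maximum-principle argument carried out over $H(\Theta)$ viewed as a model of $\ZFC$ minus power set. The two points you flag as routine --- that $H(\Theta)^{V[G]}=(H(\Theta)^V)[G]$ for sufficiently large regular $\Theta$ with $\dP\in H(\Theta)$, and that the forcing theorem and maximum principle are available inside $H(\Theta)^V$ so that the witnessing name can be pulled into $M$ by elementarity --- are indeed the only delicate points, and your handling of them is the standard one.
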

	
	We also have two additional Lemmas similar to properness.
	
	\begin{mylem}
		Let $M\prec H(\Theta)$ with Mostowski-Collapse $\pi\colon M\to N$. Let $\dP\in M$ be a forcing order and $G$ a $\dP$-generic filter over $V$. The following are equivalent:
		\begin{enumerate}
			\item $M[G]\cap V=M$
			\item Whenever $D\in M$ is open dense in $\dP$, $D\cap M\cap G\neq\emptyset$.
			\item Whenever $D\in N$ is open dense in $\pi(\dP)$, $\pi[M\cap G]\cap D\neq\emptyset$.
		\end{enumerate}
	\end{mylem}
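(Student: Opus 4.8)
The plan is to prove $(1)\Rightarrow(2)$, then $(2)\Rightarrow(1)$, and finally $(2)\Leftrightarrow(3)$, treating the last equivalence as a routine transfer along the collapse isomorphism and concentrating the real work on the interplay of $(1)$ and $(2)$. Before starting, I would record the inclusion $M\subseteq M[G]\cap V$, which holds unconditionally: for $a\in M$ the canonical check-name $\check a$ lies in $M$ because $a\mapsto\check a$ is definable over $H(\Theta)$, and $\check a^G=a\in V$. Thus in $(2)\Rightarrow(1)$ only the reverse inclusion $M[G]\cap V\subseteq M$ requires an argument.

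For $(2)\Rightarrow(1)$, take $x\in M[G]\cap V$, say $x=\sigma_G$ with $\sigma\in M$ a $\dP$-name. The idea is to force a decision about the ground-model value of $\sigma$ by a condition that already lives in $M$. I would consider
$$D=\{p : \exists y\,(p\Vdash\sigma=\check y)\}\cup\{p : \forall p'\leq p\ \neg\exists y\,(p'\Vdash\sigma=\check y)\}.$$
This set is open dense and, crucially, definable over $H(\Theta)$ from the parameters $\sigma,\dP\in M$, so $D\in M$. By $(2)$ we may pick $p\in D\cap M\cap G$. Since $x\in V$, there is $q\in G$ with $q\Vdash\sigma=\check x$; as $p,q\in G$ are compatible, $p$ cannot lie in the second part of $D$, whence $\exists y\,(p\Vdash\sigma=\check y)$. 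Elementarity of $M$ then yields such a $y\in M$, and $p\in G$ forces $\sigma_G=y$, so $x=y\in M$.

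For $(1)\Rightarrow(2)$, let $D\in M$ be open dense. Using elementarity I would first extract $E\in M$ that is a maximal antichain of $\dP$ contained in $D$ (such an $E$ exists in $H(\Theta)$ because $D$ is dense, and reflects into $M$). The name $\sigma=\{(\check p,p) : p\in E\}$ is definable from $E$, hence $\sigma\in M$, and $\sigma_G=E\cap G$ is a singleton $\{e\}$ since $E$ is a maximal antichain met by the filter $G$. Because $\{e\}\in V$, assumption $(1)$ gives $\{e\}\in M$, and so $e\in M$; as $e\in E\cap G\subseteq D\cap G$, this produces the required element of $D\cap M\cap G$. Finally, for $(2)\Leftrightarrow(3)$ I would push everything through the isomorphism $\pi\colon M\to N$: for $p,D\in M$ one has $p\in D$ iff $\pi(p)\in\pi(D)$, ``$D$ open dense in $\dP$'' iff ``$\pi(D)$ open dense in $\pi(\dP)$'', every open dense subset of $\pi(\dP)$ in $N$ is of the form $\pi(D)$, and $p\in M\cap G$ corresponds to $\pi(p)\in\pi[M\cap G]$; so ``$D\cap M\cap G\neq\emptyset$ for all open dense $D\in M$'' translates verbatim into $(3)$.

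I expect the main obstacle to be the definability bookkeeping in $(2)\Rightarrow(1)$: one must verify that, for $\Theta$ taken large enough, the forcing relation and the quantified condition $\exists y\,(p\Vdash\sigma=\check y)$ are expressible over $H(\Theta)$ (so that $D$ genuinely lies in $M$) and that the witnesses $y$ can be found within $H(\Theta)$. The remaining steps are mechanical applications of genericity, maximal antichains, and the absoluteness of open density along the transitive collapse.
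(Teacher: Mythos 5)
Your proposal is correct, and it uses the same basic toolkit as the paper (the maximal-antichain naming trick, a dense set of ``deciding'' conditions, and transfer along the Mostowski collapse), but it organizes the equivalences differently. The paper proves the cycle $(1)\Rightarrow(2)\Rightarrow(3)\Rightarrow(1)$: the direction $(1)\Rightarrow(2)$ is exactly your antichain argument (the paper's name $\tau=\{(p,\check p)\;|\;p\in A\}$ evaluates directly to the unique element of $A\cap G$, where yours evaluates to the singleton), and the substantive ``pull the value back into $M$'' argument is carried out in $(3)\Rightarrow(1)$, on the collapsed side: there the paper takes the set $D$ of conditions forcing $\tau=\check{x}_p$, pushes it through $\pi$, and uses $(3)$ to meet it. You instead prove $(1)\Leftrightarrow(2)$ entirely on the uncollapsed side and then dispose of $(2)\Leftrightarrow(3)$ as a purely mechanical transfer along the isomorphism $\pi$, using absoluteness of open density over the transitive $N$. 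Two small advantages of your route: first, it never mixes the forcing argument with the collapse, so each step is conceptually cleaner; second, your two-part dense set $\{p:\exists y\,(p\Vdash\sigma=\check y)\}\cup\{p:\forall p'\leq p\,\neg\exists y\,(p'\Vdash\sigma=\check y)\}$ is genuinely dense for an arbitrary name $\sigma\in M$, whereas the paper's set of deciding conditions is dense only when $\tau$ is forced everywhere to lie in $V$ -- a harmless but real informality in the paper that your argument repairs. What the paper's arrangement buys in exchange is brevity (three implications rather than four) and a proof of $(3)\Rightarrow(1)$ that rehearses exactly the manipulation of $\pi[G\cap M]$-generic objects that is needed later (e.g.\ in Lemma 4.6 of the paper), so the cycle is better adapted to the intended applications.
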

	
	\begin{proof}
		Let us sketch a proof.
		
		Assume (1) holds. If $D\in M$ is open dense in $\dP$, let $A\subseteq D$, $A\in M$, be a maximal antichain. Then $M$ contains the name $\tau:=\{(p,\check{p})\;|\;p\in A\}$. It follows that $p:=\tau^G\in M[G]\cap V=M$, which implies that $p\in A\cap M\cap G\subseteq D\cap M\cap G$.
		
		Assume (2) holds. Whenever $D\in N$ is open dense in $\pi(\dP)$, $D=\pi(E)$ for some $E\in M$ which, by elementarity, is open dense in $\dP$. Ergo $D\cap M\cap G$ contains some $p$. Then $\pi(p)\in\pi[M\cap G]\cap\pi(E)=\pi[M\cap G]\cap D$.
		
		Assume (3) holds. Let $\tau$ be a $\dP$-name for an element of $V$ lying in $M$. Then $M$ contains the open dense set $D$ of conditions $p$ forcing $\tau=\check{x}_p$ for some $x_p$. So $\pi(D)\in N$ is open dense in $\pi(\dP)$, which implies $\pi[M\cap G]\cap\pi(D)\neq\emptyset$ and thus that $D\cap M\cap G$ contains some $p$. The corresponding $x_p$ is in $M$ as well by elementarity, so $x_p=\tau^G\in M$ (since $p\in G$).
	\end{proof}
	
	\begin{mylem}\label{ClosureAfterForcing}
		Let $M\prec H(\Theta)$ of size ${<}\,\kappa$ with Mostowski-Collapse $\pi\colon M\to N$. Assume $\nu:=|M|=M\cap\kappa\in\kappa$ and $[M\cap\lambda]^{<\nu}\subseteq M$. Let $\dP\in M$ be a poset of size $\lambda$ with the $\kappa$-cc and let $G$ be $\dP$-generic over $V$. Then the following holds:
		\begin{enumerate}
			\item $M[G]\cap V=M$,
			\item $[\pi(\lambda)]^{<\nu}\cap V[\pi[G\cap M]]\subseteq N[\pi[G\cap M]]$.
		\end{enumerate}
	\end{mylem}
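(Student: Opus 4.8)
The plan is to prove the two clauses in order, leaning throughout on the triple equivalence of the preceding lemma.

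\textbf{Clause (1).} By that lemma it suffices to verify condition (2) there: every open dense $D\in M$ meets $M\cap G$. Given such a $D$, use elementarity to pick a maximal antichain $A\subseteq D$ with $A\in M$. Since $\dP$ is $\kappa$-cc we have $|A|<\kappa$, and as $A\in M$ the cardinal $|A|$ lies in $M\cap\kappa=\nu$, so $|A|<\nu$. Fixing an enumeration $e\colon|A|\to A$ in $M$ and noting $|A|\subseteq\nu\subseteq M$, every $e(\xi)$ is in $M$, whence $A\subseteq M$. As $A$ is a maximal antichain, $G$ meets it, and any $p\in G\cap A\subseteq D\cap M\cap G$ witnesses the density condition. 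Thus $M[G]\cap V=M$.

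\textbf{Setup for (2).} From clause (1) and condition (3) of the preceding lemma, $H:=\pi[G\cap M]$ is $\bar\dP:=\pi(\dP)$-generic over $N$, and $\pi$ lifts (using $M[G]\prec H(\Theta)^{V[G]}$) to an isomorphism $\pi'\colon M[G]\to N[H]$ with $\pi'\uhr M=\pi$ and $\pi'(G)=H$. Here $\pi(\kappa)=\nu$, $\pi(\lambda)=\otp(M\cap\lambda)$ has cardinality $\nu$, and by elementarity $\bar\dP$ is $\nu$-cc of size $\pi(\lambda)$ in $N$. Applying $\pi$ to the hypothesis $[M\cap\lambda]^{<\nu}\subseteq M$ records the ground closure $([\pi(\lambda)]^{<\nu})^V\subseteq N$. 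Finally, since $M[G]\cap V=M$ gives $M[G]\cap\On=M\cap\On$, the map $\pi'$ agrees with $\pi$ on ordinals below $\lambda$, so $\pi'$ carries $M\cap\lambda$ bijectively onto $\pi(\lambda)$.

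\textbf{Reduction.} Let $x\in[\pi(\lambda)]^{<\nu}\cap V[H]$ and set $s:=\pi^{-1}[x]\subseteq M\cap\lambda$; since $\pi^{-1}\uhr\pi(\lambda)\in V$ we have $s\in V[H]$ and $|s|<\nu$. Because $\pi'(s)=\pi[s]=x$ as soon as $s\in M[G]$, it suffices to prove $s\in M[G]$. Two preliminary observations: first, $V[H]=V[G\cap M]$, as $\pi^{-1}\uhr\bar\dP\in V$; second, $M[G]\subseteq V[G\cap M]$, since for a name $\dot a\in M$ elementarity and $\kappa$-cc yield a nice name $\dot a'\in M$ whose antichains, being of size $<\kappa$ hence (as in Clause (1)) of size $<\nu$ and lying in $M$, make $\dot a'_G=\dot a_G$ depend only on $G\cap M$.

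\textbf{The crux and main obstacle.} The heart of the matter is the converse inclusion for small sets: every $s\in V[G\cap M]$ with $s\subseteq M\cap\lambda$ and $|s|<\nu$ belongs to $M[G]$. If $s\in V$ this is immediate from $[M\cap\lambda]^{<\nu}\subseteq M$. In general $s$ genuinely depends on $G\cap M$, and the difficulty is to exhibit a name for $s$ lying in $M$. I expect to attack this by regularizing $\dP\cap M$ to a complete subforcing $\bar\dP_M\in V$ of size ${\le}\,\nu$ for which $G\cap M$ is $V$-generic (using Clause (1)): then $V[G\cap M]$ is a genuine $\kappa$-cc extension and $s$ has a nice $\bar\dP_M$-name. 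The obstacle is precisely that such a name has size up to $\nu$ and so need not lie in $M$, whereas only the $<\nu$-sized subsets of $M\cap\lambda$ are guaranteed to be in $M$; overcoming it requires transferring the ground closure $[M\cap\lambda]^{<\nu}\subseteq M$ through the forcing. Concretely, I intend to use that for a name $\dot s\in M$ each dense set $D_\xi$ of conditions deciding ``$\xi\in\dot s$'' (for $\xi\in M$) lies in $M$ and is therefore met inside $M\cap G$, so that the characteristic function of $s$ is computed in $M[G]$; the work is thus reduced to reflecting the $V$-name into $M$ via the elementarity $M[G]\prec H(\Theta)^{V[G]}$ and the $\nu$-cc of $\bar\dP$ over $N$. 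Once $s\in M[G]$ is secured, $x=\pi'(s)\in N[H]$, establishing clause (2).
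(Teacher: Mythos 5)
Your Clause (1) is correct and is essentially the paper's own argument (antichains that are elements of $M$ have size ${<}\,\nu$ and are therefore subsets of $M$), and your reduction of Clause (2) to showing $s=\pi^{-1}[x]\in M[G]$ is legitimate: via the lifted collapse $\pi'\colon M[G]\to N[H]$ this is equivalent to $x\in N[H]$. The problem is that the crux of Clause (2) is then never proved. Your final paragraph is a plan, not an argument, and the plan is circular: you propose to use dense sets $D_\xi$ attached to ``a name $\dot s\in M$'', but producing such a name in $M$ is exactly the statement to be established. A $\pi(\dP)$-name (or $\bar\dP_M$-name) for $s$ taken in $V$ has size up to $\nu$, and the hypothesis $[M\cap\lambda]^{<\nu}\subseteq M$ only transfers ${<}\,\nu$-sized subsets of $M\cap\lambda$ into $M$; you correctly identify this as ``the obstacle'' but do not overcome it. In addition, two auxiliary claims are wrong or unjustified: $M[G]\subseteq V[G\cap M]$ is false in general, since $\Gamma_{\dP}\in M$ gives $G\in M[G]$ while $G\notin V[G\cap M]$ unless $V[G]=V[G\cap M]$; and Clause (1) does \emph{not} yield that $G\cap M$ is generic over $V$ --- via the preceding equivalence lemma it yields genericity of $\pi[G\cap M]$ over $N$ only.

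The missing idea, which is how the paper proves Clause (2), is to transfer not a name but a small \emph{decision table}. Write $x$ as a $\mu$-sequence, $\mu<\nu$, fix any $\pi(\dP)$-name $\dot x\in V$ for $x$, and for each $\alpha<\mu$ pick in $V$ a maximal antichain $A_\alpha$ of $\pi(\dP)$ of conditions deciding $\dot x(\alpha)$; by the $\nu$-cc of $\pi(\dP)$ each $|A_\alpha|<\nu$. The function $f$ with $f(\alpha,q)=$ the value $q$ decides is then, after coding, a ${<}\,\nu$-sized subset of $\pi(\lambda)$ lying in $V$, so $\pi^{-1}[f]\in M$ by the closure hypothesis and hence $f=\pi(\pi^{-1}[f])\in N$. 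Since maximality of an antichain of $\pi(\dP)$ is absolute between the transitive models $N$ and $V$, and $\pi[G\cap M]$ is $N$-generic by Clause (1), each $A_\alpha$ meets $\pi[G\cap M]$ in exactly one condition $q$, and $x(\alpha)=f(\alpha,q)$ is computable inside $N[\pi[G\cap M]]$ from $f$ and the filter. This recovers $x\in N[\pi[G\cap M]]$ without ever producing a name in $N$ or $M$ and without any genericity over $V$; note that the smallness of the antichains in $V$ (the $M\cap\kappa$-cc of $\pi(\dP)$, which in the paper's applications follows from $\pi(\dP)$ being Knaster or $M\cap\kappa$-Knaster in $V$) is the one input your sketch also needs but cannot replace by Clause (1).
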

	
	\begin{proof}
		We will assume that $\dP$ is a poset on $\lambda$.
		
		Let $\sigma\in M$ be such that $\sigma_G\in V$. By the forcing theorem and elementarity, there is $p\in G\cap M$ which forces $\sigma\in V$. Hence, again by elementarity, there exists $A\in M$ which is a maximal antichain in $\dP$ such that for any $q\in A$ there is $y_q$ such that $q\Vdash\sigma=\check{y}_q$. Because $M\cap\kappa\in\kappa$ and $|A|<\kappa$ by the $\kappa$-cc. of $\dP$, $A\subseteq M$ and thus $y_q\in M$ for every $q\in A$. Since $\sigma_G$ is one of the $y_q$, the statement follows.
		
		Now assume $x\in[\pi(\lambda)]^{\mu}\cap V[\pi[G\cap M]]$ for some $\mu<M\cap\kappa$. Let $\dot{x}$ be a $\pi(\dP)$-name for $x$ and for $\alpha\in\mu$, let $A_{\alpha}$ be a maximal antichain deciding the $\alpha$th value of $\dot{x}$. Then $|A_{\alpha}|<M\cap\kappa$ by the $M\cap\kappa$-cc. of $\pi(\dP)$. Let $f:\bigcup_{\alpha<\mu}\{\alpha\}\times A_{\alpha}\longrightarrow V$ be such that $f(\alpha,q)$ is the value $q$ decides for the $\alpha$th element of $\dot{x}$. Then (after some coding) $f\in[\pi(\lambda)]^{<M\cap\kappa}$, so $\pi^{-1}[f]\in M$ and $f=\pi(\pi^{-1}[f])\in N$. From $f$ and $\pi[G\cap M]$, we can recover $\dot{x}^{\pi[G\cap M]}$, so $\dot{x}^{\pi[G\cap M]}\in N[\pi[G\cap M]]$.
	\end{proof}
	
	The last result we need can be seen as dual to the lifting of elementary embeddings. This is folklore, for a proof, see \cite[Lemma 5.3.6]{JakobPhD}.
	
	\begin{mylem}\label{PiExtension}
		Let $M\prec H(\Theta)$ with Mostowski-Collapse $(N,\pi)$. Assume $\dP\in M$ is a poset, $G$ a $\dP$-generic filter and $M[G]\cap V=M$. Then the Mostowski-Collapse of $M[G]$ is given by $(N[\pi[G\cap M]],\pi_{M[G]})$, where $\pi_{M[G]}(\sigma^G)=(\pi(\sigma))^{\pi[G\cap M]}$. Furthermore, $\pi_{M[G]}\uhr M=\pi$.
	\end{mylem}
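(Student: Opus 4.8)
The plan is to set $g:=G\cap M$ and $H:=\pi[G\cap M]$, and to establish in turn that (a) $H$ is $\pi(\dP)$-generic over $N$, (b) the assignment $\sigma^G\mapsto\pi(\sigma)^H$ is a well-defined $\in$-isomorphism of $(M[G],\in)$ onto $(N[H],\in)$, and (c) $N[H]$ is transitive. Since the transitive collapse is the unique $\in$-isomorphism of a structure onto a transitive set, (a)--(c) together identify $N[H]$ as the transitive collapse of $M[G]$ and identify $\pi_{M[G]}$ as the collapsing map. The equality $\pi_{M[G]}\uhr M=\pi$ then reduces to a short computation with check-names.

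For (a), the preceding lemma characterizing $M[G]\cap V=M$ furnishes us with two facts: that $\pi[G\cap M]$ meets every open dense subset of $\pi(\dP)$ lying in $N$, and that $G\cap M$ meets every open dense subset of $\dP$ lying in $M$. The first is precisely the statement that $H$ meets every dense set of $N$. It remains to see that $H$ is a filter. Upward closure is immediate, since $\pi\uhr(\dP\cap M)$ is an order-isomorphism onto $\pi(\dP)$ and $G$ is upward closed. For directedness, given $p,q\in G\cap M$ the set $D$ of conditions which either lie below both $p$ and $q$ or are incompatible with one of them is open dense and definable from $p,q,\dP\in M$, hence $D\in M$; any $r\in D\cap M\cap G$ must be a common lower bound of $p,q$, since it is compatible with both, so $\pi(r)\in H$ witnesses directedness. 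Thus $H$ is $\pi(\dP)$-generic over $N$.

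For (b), the crux is the following transfer principle: for $\dP$-names $\sigma,\tau\in M$, $p\in\dP\cap M$, and $\varphi$ atomic (that is, $\in$ or $=$),
$$p\Vdash_{\dP}\varphi(\sigma,\tau)\ \Longleftrightarrow\ \pi(p)\Vdash_{\pi(\dP)}^{N}\varphi(\pi(\sigma),\pi(\tau)),$$
and likewise for the negations. This holds because the forcing relation for atomic formulas is given by an absolute recursion on names definable over $H(\Theta)$; elementarity of $M\prec H(\Theta)$ transports the defining statement into $M$, the isomorphism $\pi$ carries it into $N$, and absoluteness of that recursion for the transitive model $N$ makes it compute the genuine forcing relation over $N$. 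Granting this, fix $\sigma,\tau\in M$ and $\varphi\in\{\in,=\}$; the set $D$ of conditions deciding $\varphi(\sigma,\tau)$ is open dense and lies in $M$, so using $M[G]\cap V=M$ we may pick $p\in D\cap M\cap G$. If $\varphi(\sigma^G,\tau^G)$ holds, then the condition $p$, deciding $\varphi(\sigma,\tau)$ and lying in $G$, must force it, whence $\pi(p)\Vdash^{N}\varphi(\pi(\sigma),\pi(\tau))$ by the transfer principle, and since $\pi(p)\in H$ is generic over $N$ we obtain $\varphi(\pi(\sigma)^H,\pi(\tau)^H)$; the converse is symmetric, using transfer of the negation. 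Applying this with $\varphi$ being $=$ shows $\sigma^G\mapsto\pi(\sigma)^H$ is well-defined and injective, and applying it to $\in$ shows the map preserves and reflects membership. Surjectivity onto $N[H]$ follows since $\pi$ is onto: every $\pi(\dP)$-name in $N$ is $\pi(\sigma)$ for some $\sigma\in M$, which by elementarity is a $\dP$-name. Finally $N[H]$ is transitive, for if $a\in\rho^H\in N[H]$ then $a=\tau^H$ for a name $\tau\in N$ occurring in $\rho$, so $a\in N[H]$.

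For the last clause, let $x\in M$ and recall that the canonical name $\check{x}$ lies in $M$ and satisfies $\check{x}^G=x$. Then $\pi_{M[G]}(x)=\pi(\check{x})^H$, and since $\pi$ commutes with the check-operation (a straightforward induction on $x$ using $\pi(1_{\dP})=1_{\pi(\dP)}$), $\pi(\check{x})$ is the canonical name for $\pi(x)$, whose $H$-evaluation is $\pi(x)$; hence $\pi_{M[G]}\uhr M=\pi$. I expect the main obstacle to be the transfer principle of step (b): one must argue carefully that the definition of the atomic forcing relation is sufficiently absolute to be computed correctly inside the transitive model $N$, so that transporting it syntactically along the isomorphism $\pi$ yields genuine forcing facts over $N$. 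Everything else reduces to the already-established equivalence governing $M[G]\cap V=M$ together with standard properties of generic evaluations.
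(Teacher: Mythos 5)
Your proof is correct. There is, however, nothing in the paper to compare it against: the paper states this lemma without proof, remarking that it ``is folklore'' and citing Lemma 5.3.6 of the author's PhD thesis for the argument. Your write-up is precisely the standard argument that such a citation points to, and it is complete: (a) the genericity of $\pi[G\cap M]$ over $N$ follows from clause (3) of the preceding equivalence lemma (meeting all \emph{open} dense sets in $N$ suffices once one knows $H$ is a filter, since the downward closure of a dense set is open dense and filters are upward closed --- your filter verification via the decide-or-be-incompatible dense set is fine); (b) the transfer principle you isolate as the crux is indeed the only delicate point, and it holds for exactly the reason you give: the forcing relation for atomic formulas is defined by an absolute recursion, so it is computed correctly both by $H(\Theta)$ (giving $D\in M$ by elementarity and letting $\pi$ carry forcing facts from $M$ to $N$) and by the transitive set $N$, which models enough of $\ZFC$ for the truth lemma over $N$ to apply to the $N$-generic filter $H$; and (c) uniqueness of the transitive collapse then identifies $(N[H],\pi_{M[G]})$ as claimed, with $\pi(\check{x})=\check{\pi(x)}$ handled by the $\in$-induction you describe (noting $\pi(x)=\pi[x\cap M]$, which is exactly what makes the induction close). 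So the proposal stands as a correct, self-contained proof of the folklore fact the paper outsources.
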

	
	We now give a criterion for when a forcing order forces $\ISP$, generalizing the result and adapting the proof from \cite[Theorem 7.2]{HolyLNSmallEmbeddingLargeCard}:
	
	\begin{mysen}\label{MainTheorem}
		Let $\delta\leq\kappa\leq\lambda=\lambda^{<\kappa}$ be regular cardinals and $\dP$ a poset. Assume the following:
		\begin{enumerate}
			\item $\kappa$ is $\lambda$-ineffable,
			\item $\dP$ is of size $\leq\lambda$ and $\kappa$-cc.,
			\item For every $(\kappa,\lambda)$-list $e$, every sufficiently large $\Theta$ and every $x\in[H(\Theta)]^{<\kappa}$, there is a $\lambda$-ineffability witness $M$ for $\kappa$ with respect to $e$ such that $\dP\in M$ and the following holds:
			\begin{enumerate}
				\item $x\subseteq M$
				\item $[M\cap\lambda]^{<M\cap\kappa}\subseteq M$.
				\item Whenever $G$ is $\dP$-generic over $V$, $\pi[G\cap M]$ is $\pi(\dP)$-generic over $V$ and the pair $(V[\pi[G\cap M]],V[G])$ has the ${<}\,\pi(\delta)$-approximation property.
			\end{enumerate}
		\end{enumerate}
		Then $\dP$ forces $\ISP(\check{\delta},\check{\kappa},\check{\lambda})$.
	\end{mysen}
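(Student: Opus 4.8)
The plan is to fix a generic extension $V[G]$ and verify the forced statement there by appealing to the list-by-list characterization of Theorem \ref{IneffChara}: for a given $\delta$-slender $(\kappa,\lambda)$-list in $V[G]$ it suffices to produce a single $\lambda$-ineffability witness for it. So fix a $\dP$-name $\dot f$ forced to be a $\delta$-slender $(\kappa,\lambda)$-list (passing below a condition, I may assume $1_{\dP}$ forces this), and fix a $\dP$-name $\dot F$ for a function whose closure points lie inside a slenderness club $C\subseteq[H(\Theta)^{V[G]}]^{<\kappa}$ for $\dot f^G$. In $V$ I build a single $(\kappa,\lambda)$-list $e$ coding, for each $a\in[\lambda]^{<\kappa}$, a sufficiently thin $\dP$-name for the value $\dot f(\check a)$: using $|\dP|\leq\lambda$ and $\lambda^{<\kappa}=\lambda$ one fixes a canonical coding of $\dP$-names for bounded subsets of $\lambda$ by subsets of $\lambda$, and $\delta$-slenderness is what lets the relevant name be taken thin enough that its code is a subset of $a$ of size ${<}\,\kappa$. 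I then apply hypothesis (3) to $e$, with the parameter $x$ chosen to contain $\dP$, $\dot f$, $\dot F$ and the coding apparatus, to obtain a $\lambda$-ineffability witness $M$ satisfying (a)--(c); arranging (as in Lemma \ref{IneffEmbed}) that $M\cap\kappa$ is inaccessible, I write $\nu:=|M|=M\cap\kappa$, let $\pi\colon M\to N$ be the collapse, and set $M':=M[G]$ and $H:=\pi[G\cap M]$.

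Next I check that $M'$ is a candidate witness for $\dot f^G$. By the lemma of Shelah quoted above $M'\prec H(\Theta)^{V[G]}$, and by Lemma \ref{ClosureAfterForcing}(1) we have $M'\cap V=M$; hence $M'\cap\kappa=\nu\in\kappa$ and $M'\cap\lambda=M\cap\lambda=:a$, while $\kappa,\lambda,\dot f^G\in M'$. As $\dot F\in M$ we get $\dot F^G\in M'\prec H(\Theta)^{V[G]}$, so $M'$ is closed under $\dot F^G$ and therefore $M'\in C$. By Lemma \ref{ClosureAfterForcing}(2) we have $[\pi(\lambda)]^{<\nu}\cap V[H]\subseteq N[H]$, and by Lemma \ref{PiExtension} the transitive collapse of $M'$ is $(N[H],\pi_{M'})$ with $\pi_{M'}\uhr M=\pi$, while $H$ is $\pi(\dP)$-generic by (3c). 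It remains to produce $b\in M'$ with $b\cap M'=\dot f^G(a)$. Writing $d:=\pi[\dot f^G(a)]\subseteq\pi(\lambda)$ and using that $\pi_{M'}(b)=\pi[b\cap M']$ for $b\subseteq\lambda$ in $M'$, such a $b$ exists if and only if $d\in N[H]$; so the entire theorem reduces to the single assertion $d\in N[H]$.

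The first half of this is an approximation argument. To see $d\in V[H]$ I use the ${<}\,\pi(\delta)$-approximation property of $(V[H],V[G])$ from (3c): for any $z\in V[H]$ with $z\subseteq\pi(\lambda)$ and $|z|<\pi(\delta)$, note $\pi(\delta)\leq\pi(\kappa)=\nu$, so $z\in N[H]$ by Lemma \ref{ClosureAfterForcing}(2); then $w:=\pi^{-1}[z]\in M'$ satisfies $w\subseteq a$ and $|w|<\delta$, so slenderness (as $M'\in C$) gives $\dot f^G(a)\cap w\in M'$, whence $d\cap z=\pi[\dot f^G(a)\cap w]\in N[H]\subseteq V[H]$. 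Since $d\in V[G]$ has all of its small $V[H]$-traces in $V[H]$, the approximation property yields $d\in V[H]$. The remaining passage from $V[H]$ into $N[H]$ is where the ineffability of $\kappa$ enters: the guess $b_e\in M$ with $b_e\cap M=e(M\cap\lambda)$ collapses to a $\pi(\dP)$-name $\sigma\in N$, and the coding underlying $e$ is arranged so that $\sigma$ is a name for $\pi[\dot f^G(a)]$. Verifying $\sigma^H=d$ — equivalently, that the guessed ground-model name is correct — is carried out by comparing $\sigma^H$ and $d$ on small sets via slenderness and the coherence of the guess and invoking the approximation property once more to rule out any discrepancy living on a set new over $V[H]$; this gives $d=\sigma^H\in N[H]$, as required.

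I expect the main obstacle to be the construction of the list $e$ together with the final verification. Defining $e$ so that it is genuinely a $(\kappa,\lambda)$-list — so that the code of the chosen name for $\dot f(\check a)$ really is a subset of $a$ of size ${<}\,\kappa$ — is exactly where $\delta$-slenderness must be used to thin the names, and one must check that this thinning commutes with the collapse $\pi$ so that $\pi(b_e)$ decodes inside $N$ to a name $\sigma$ for $d$ rather than merely to some $N[H]$-set. Matching the slenderness threshold $\delta$, stated for $[\lambda]^{<\delta}$, with the approximation threshold $\pi(\delta)$ rests on the inequality $\pi(\delta)\leq\nu$ together with the closure of Lemma \ref{ClosureAfterForcing}(2); this bookkeeping, and the precise choice of coding, rather than any single isolated idea, is the delicate part of the argument.
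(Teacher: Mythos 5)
Your overall skeleton --- reducing the existence of a guess $b\in M[G]$ with $b\cap M[G]=\dot f^G(a)$ to the single assertion $\pi[\dot f^G(a)]\in N[\pi[G\cap M]]$, and proving membership in $V[\pi[G\cap M]]$ by combining hypothesis (3c) with slenderness and Lemma \ref{ClosureAfterForcing} --- is the paper's, and that middle step is carried out correctly. But both ends of your argument have genuine gaps. First, the list $e$ you describe is not well defined: for an arbitrary $a\in[\lambda]^{<\kappa}$ there is no $\dP$-name for $\dot f(\check a)$ whose code is a subset of $a$, because the conditions occurring in such a name range over all of $\dP\cong\lambda$, and $\delta$-slenderness says nothing about the supports of names --- it constrains the values $\dot f^G(a)\cap z$, not the conditions needed to decide them. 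What the paper codes into $e(a)$ is the forcing relation $\{\langle\alpha,\beta\rangle\;|\;\alpha,\beta\in\pi[a]\wedge\alpha\Vdash_{\pi(\dP)}\check\beta\in\dot x_a\}$ of a name $\dot x_a$ for the \emph{collapsed} poset $\pi(\dP)$, whose conditions lie below $\pi(\lambda)=\otp(a)$, so that the pulled-back code lies inside $a$; and, crucially, $e(a)$ is defined by cases: such an $\dot x_a$ is coded only when there also exists a condition $p_a\leq p$ with $p_a\Vdash\dot f(a)=\pi^{-1}[\dot x_a^{\pi[\Gamma\cap M]}]$, and $e(a)=\emptyset$ otherwise. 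Your approximation argument is then needed \emph{in $V$}, to show that this case actually obtains at $a=M\cap\lambda$ for the witness $M$, i.e.\ to produce $p_a$ and $\dot x_a$; it is not merely a computation inside one fixed extension.

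Second, and fatally for the write-up as it stands, the closing step (``verify $\sigma^H=d$ \dots by invoking the approximation property once more to rule out any discrepancy living on a set new over $V[H]$'') cannot work. After your first approximation argument, both $\sigma^H$ and $d$ are elements of $V[H]$, so their symmetric difference is an element of $V[H]$, and the ${<}\,\pi(\delta)$-approximation property of the pair $(V[H],V[G])$ is silent about it: it can never separate two sets that already lie in $V[H]$. The equality $\dot x_a^{\pi[\Gamma\cap M]}=\pi[\dot f(a)]$ is a \emph{forced} statement, true only below the condition $p_a$, and the generic $G$ you fixed at the outset has no reason to contain $p_a$; for such $G$ the decoded set $\sigma^H$ may simply be a wrong member of $V[H]$. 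This is exactly why the paper does not fix $G$ first: it shows that $p_a$ forces $M[\Gamma]$ to be an ineffability witness, so that below any condition forcing slenderness there is densely a condition forcing an ineffable branch, and below $p_a$ the verification that $b_f:=\{\beta\;|\;\exists\alpha\in\Gamma\;\langle\alpha,\beta\rangle\in b_e\}$ satisfies $b_f\cap M=\dot f(M\cap\lambda)$ is a direct computation from the definition of $e$, using no further slenderness or approximation. To repair your proof you must restructure it as this density argument; the direct verification for an arbitrary fixed $G$ is not available.
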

	
	\begin{proof}
		Without loss of generality, consider $\dP$ to be a partial order on $\lambda$. Denote by $\langle\cdot,\cdot\rangle$ the G\"odel pairing function.
		
		Let $\dot{f}$ be a $\dP$-name for a $\delta$-slender $(\kappa,\lambda)$-list, forced by some $p$. Let $\dot{F}$ be the function corresponding to the club (in some $[H(\Theta')]^{<\kappa}$) witnessing the $\delta$-slenderness of $\dot{f}$. Let $\Theta$ be large so that $H(\Theta)$ contains all relevant objects.
		
		We will transform $\dot{f}$ into a ground-model $(\kappa,\lambda)$-list. To this end, let $a\in[\lambda]^{<\kappa}$. We consider two cases:
		\begin{enumerate}
			\item If there exists $M\prec H(\Theta)$ (with Mostowski-Collapse $\pi$) such that $M\cap\kappa\in \kappa$, $M\cap\lambda=a$, $[M\cap\lambda]^{<M\cap\kappa}\subseteq M$, $\kappa,\lambda,\dot{f},\dot{F}\in M$ and for some $\pi(\dP)$-name $\dot{x}_a$ for a subset of $\pi[M\cap\lambda]$ and a condition $p_a\leq p$, $p_a\Vdash\dot{f}(M\cap\lambda)=\pi^{-1}[\dot{x}_a^{\pi[\Gamma\cap M]}]$, let
			$$g(a):=\{\langle\alpha,\beta\rangle\;|\;\alpha,\beta\in\pi[a]\wedge\alpha\Vdash\check{\beta}\in\dot{x}_a\}$$
			which is a subset of $\pi[a]$ and 
			$$e(a):=\pi^{-1}[g(a)]$$
			\item Otherwise, let $e(a):=\emptyset$.
		\end{enumerate}
		Let $M\in[H(\Theta)]^{<\kappa}$ be a $\lambda$-ineffability witness for $\kappa$ with respect to $e$ as in the requirements. Let $\pi\colon M\to N$ be the Mostowski-Collapse and $a:=M\cap\lambda$ as well as $\theta:=\pi[a]$. We will show that case (1) holds. Let $G_0$ be $\dP$-generic containing $p$ and $G_0':=\pi[G_0\cap M]$. By Lemma \ref{ClosureAfterForcing}, $\pi$ extends to $\pi\colon M[G_0]\to N[G_0']$ and $[\theta]^{<\nu}\cap V[G_0']\subseteq N[G_0']$. Furthermore, $M[G_0]\prec H(\Theta)^{V[G_0]}$, so $M[G_0]\cap H(\Theta')$ is closed under $\dot{F}^{G_0}$ and therefore witnesses the slenderness of $\dot{f}^{G_0}$.
		
		Assume $\pi[\dot{f}^{G_0}(a)]\notin V[G_0']$. By the ${<}\,\pi(\delta)$-approximation property there is $z\in V[G_0']$ with ordertype ${<}\,\pi(\delta)$ such that $\pi[\dot{f}^{G_0}(a)]\cap z\notin V[G_0']$. We can assume $z\subseteq \pi[a]$, so we have $z\in N[G_0']$ and $\pi^{-1}(z)=\pi^{-1}[z]\in M[G_0]$. We have
		$$\pi[\dot{f}^{G_0}(a)]\cap z=\pi[\dot{f}^{G_0}(a)]\cap \pi[\pi^{-1}[z]]=\pi[\dot{f}^{G_0}(a)\cap \pi^{-1}[z]]$$
		and, since $M[G_0]\cap H(\Theta')$ witnesses the slenderness of $\dot{f}^{G_0}$, we have $\dot{f}^{G_0}(a)\cap\pi^{-1}[z]\in M[G_0]$, which implies that $\dot{f}^{G_0}(a)\cap\pi^{-1}[z]\subseteq M[G_0]$ and thus that
		$$\pi[\dot{f}^{G_0}(a)\cap\pi^{-1}[z]]=\pi(\dot{f}^{G_0}(a)\cap\pi^{-1}[z])\in N[G_0']$$
		which presents a contradiction. So $\pi[\dot{f}^{G_0}(a)]\in V[G_0']$. Ergo there is $p_a\leq p$ as well as a $\pi(\dP)$-name $\dot{x}_a$ for a subset of $\theta$ such that $p_a\Vdash\pi[\dot{f}(a)]=\dot{x}_a^{\pi[\Gamma\cap M]}$ which is what we wanted to show.
		
		Now our aim is to show that $p_a$ forces $M[\Gamma]$ to be a $\lambda$-ineffability witness for $\kappa$ with respect to $\dot{f}$. So let $G_1$ be a $\dP$-generic filter containing $p_a$. It is clear that $\kappa,\lambda,\dot{f}^{G_1}\in M[G_1]$. By assumption there is $b_e\in M$ such that $b_e\cap M=e(M\cap\lambda)$. Define
		$$b_f:=\{\beta\;|\;\exists\alpha\in G_1\;\langle\alpha,\beta\rangle\in b_e\}\in M[G_1]$$
		all that is left is to show $b_f\cap M=\dot{f}^{G_1}(M\cap\lambda)=\dot{f}^{G_1}(M[G_1]\cap\lambda)$ (where the last equality holds by Lemma \ref{ClosureAfterForcing}).
		
		Let $\beta\in b_f\cap M$. By elementarity there is $\alpha\in G_1\cap M$ such that $\langle\alpha,\beta\rangle\in b_e$ and we have $\langle\alpha,\beta\rangle\in b_e\cap M=e(a)=\pi^{-1}[g(a)]$. So $\pi(\langle\alpha,\beta\rangle)=\langle\pi(\alpha),\pi(\beta)\rangle\in g(a)$. By the definition, $\pi(\alpha)\Vdash\pi(\check{\beta})\in\dot{x}_a$. Hence $\pi(\beta)\in \dot{x}_a^{\pi[G_1\cap M]}$ and $\beta\in\dot{f}^{G_1}(a)$.
		
		Let $\beta\in \dot{f}^{G_1}(M\cap\lambda)$, so $\pi(\beta)\in\dot{x}_a^{\pi[G_1\cap M]}$. Thus there exists $\alpha'\in\pi[G_1\cap M]$ such that $\alpha'\Vdash\pi(\beta)\in\dot{x}_a$. Let $\alpha'=\pi(\alpha)$, $\alpha\in G_1\cap M$. Hence $\pi(\langle\alpha,\beta\rangle)\in g(a)$ and $\langle\alpha,\beta\rangle\in e(a)=b_e\cap M$. Ergo $\beta\in b_f\cap M$.
	\end{proof}
	
	We directly obtain an answer to problem 4.33 given by Mohammadpour in \cite{MohammadpourRoadCompactness}, namely that $\ISP$ can hold at weakly, but not strongly, inaccessible cardinals:
	
	\begin{mycol}
		Let $\tau<\kappa\leq\lambda$ be cardinals such that $\kappa$ is $\lambda$-ineffable. There exists a generic extension in which $\kappa$ is weakly inaccessible, $2^{\tau}=\kappa$ and $\ISP(\tau^+,\kappa,\lambda)$ holds.
	\end{mycol}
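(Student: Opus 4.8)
The plan is to apply Theorem \ref{MainTheorem} with $\delta:=\tau^+$ to the Cohen forcing $\dP:=\Add(\tau,\kappa)$; since the corollary is advertised as a direct consequence, the only real content is checking the three hypotheses of that theorem and reading off the cardinal arithmetic of the extension. First I would reduce to the case $\tau^{<\tau}=\tau$ and $\lambda=\lambda^{<\kappa}$: both can be secured by a preliminary forcing preserving the $\lambda$-ineffability of $\kappa$ (or one works under GCH in the relevant range), and the second is needed both for Theorem \ref{MainTheorem} and for Lemma \ref{IneffEmbed}. Under $\tau^{<\tau}=\tau$ the poset $\Add(\tau,\kappa)$ is $\tau^+$-cc (hence $\kappa$-cc), has size $\kappa\leq\lambda$, is $<\tau$-closed, and so preserves all cardinals and cofinalities; a nice-name count (using $\kappa^\tau=\kappa$, which holds since $\kappa$ is inaccessible) gives $2^\tau=\kappa$. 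Consequently, in $V[G]$ the cardinal $\kappa$ stays regular and a limit cardinal, i.e. weakly inaccessible, but is no longer a strong limit because $2^\tau=\kappa$. This settles the first two conclusions and hypothesis (2), while hypothesis (1) is given.

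For hypothesis (3), fix a $(\kappa,\lambda)$-list $e$, a large $\Theta$, and $x\in[H(\Theta)]^{<\kappa}$. I would invoke Lemma \ref{IneffEmbed} applied to $e$ and to $x\cup\{\dP\}$ to obtain $M\prec H(\Theta)$ which is a $\lambda$-ineffability witness for $\kappa$ with respect to $e$, with $x\subseteq M$, $[M\cap\lambda]^{<M\cap\kappa}\subseteq M$, $\dP\in M$, and such that $\nu:=M\cap\kappa$ is inaccessible. Clauses (a) and (b) are then immediate, so the whole burden is clause (c).

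To verify (c), let $\pi\colon M\to N$ be the transitive collapse, so $\pi\uhr\nu=\mathrm{id}$ and $\pi(\kappa)=\nu$; since $\Add(\tau,\kappa)$ is definable from $\tau,\kappa\in M$, we get $\pi(\dP)=\Add(\tau,\nu)$. The key computation is $\pi[G\cap M]=G\uhr\nu$: every $p\in G$ with $\supp(p)\subseteq\nu$ has size $<\tau<\nu$ and is a subset of $M$, hence lies in $M$ by the closure of $M$, while conversely every $p\in G\cap M$ is supported below $\nu$ and fixed by $\pi$. Thus $\pi[G\cap M]=G\uhr\nu$ is $\Add(\tau,\nu)$-generic over $V$, as a restriction of a Cohen generic to a set of coordinates, which gives the genericity clause. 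Finally $V[G]=V[G\uhr\nu][G\uhr[\nu,\kappa)]$, where $G\uhr[\nu,\kappa)$ is $\Add(\tau,[\nu,\kappa))$-generic over $V[G\uhr\nu]$; viewing this Cohen forcing as an order on the product $\Add(\tau,[\nu,\kappa))\times\{0\}$ with trivial (hence vacuously strongly $<\tau^+$-distributive) term ordering and square-$\tau^+$-cc base, Theorem \ref{ApproxProp} applied in $V[G\uhr\nu]$ shows that the pair $(V[G\uhr\nu],V[G])$ has the $<\tau^+$-approximation property. As $\tau^+<\nu$ we have $\pi(\delta)=\tau^+$, so this is exactly clause (c), and Theorem \ref{MainTheorem} yields $\ISP(\tau^+,\kappa,\lambda)$ in $V[G]$.

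The conceptually central step, and the only one beyond bookkeeping, is the approximation clause (c): there the quotient is plain Cohen forcing, and its $<\tau^+$-approximation property is the degenerate, trivial-term-ordering instance of Theorem \ref{ApproxProp}, which is precisely the tool the paper has built. The remaining technical caveat I would treat carefully is the preliminary arrangement of $\tau^{<\tau}=\tau$ and $\lambda=\lambda^{<\kappa}$ while preserving the $\lambda$-ineffability of $\kappa$, since $\lambda$-ineffability is exactly what powers Lemma \ref{IneffEmbed} and hence the existence of the witness $M$ (for singular $\tau$ one would instead feed a suitable Mitchell-style variant into the same machinery, but the mechanism — a cc base together with a strongly distributive term ordering — is identical).
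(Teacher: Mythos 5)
Your proposal is correct and takes essentially the same route as the paper: force with $\Add(\tau,\kappa)$, obtain a witness via Lemma \ref{IneffEmbed}, observe that $\pi(\Add(\tau,\kappa))=\Add(\tau,M\cap\kappa)$ with $\pi[G\cap M]$ generic and that the quotient $\Add(\tau,\kappa\smallsetminus(M\cap\kappa))$ is $\tau^+$-Knaster, hence (viewed as an iteration-like order on the product $\Add(\tau,\kappa\smallsetminus(M\cap\kappa))\times\{0\}$ with trivial term ordering) has the ${<}\,\tau^+$-approximation property by Theorem \ref{ApproxProp}, and conclude by Theorem \ref{MainTheorem}. Your explicit attention to the hypotheses $\tau^{<\tau}=\tau$ and $\lambda^{<\kappa}=\lambda$ (which the paper uses tacitly, as they appear in Theorem B of the introduction and are needed for Lemma \ref{IneffEmbed} and for the Knaster property of the quotient) is a reasonable refinement rather than a departure.
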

	
	\begin{proof}
		Let $\dP:=\Add(\tau,\kappa)$. Let $e$ be any $(\kappa,\lambda)$-list, $\Theta$ large and $x\in[H(\Theta)]^{<\kappa}$. Find a $\lambda$-ineffability witness $M$ for $\kappa$ as in Lemma \ref{IneffEmbed} with $\tau^++1\subseteq M$. It follows that $\pi(\dP)=\Add(\tau,M\cap\kappa)$. If $G$ is $\dP$-generic, $\pi[G\cap M]$ is $\pi(\dP)$-generic and $V[G]$ is an extension of $V[\pi[G\cap M]]$ by $\Add(\tau,\kappa\smallsetminus(M\cap\kappa))$. In $V[\pi[G\cap M]]$, this forcing is $\tau^+$-Knaster and thus has the ${<}\,\tau^+=\pi(\tau^+)$-approximation property (viewing it as an ordering on the product $\dP\times\{0\}$ which is clearly iteration-like).
	\end{proof}
	
	The same works for any desired value of $2^{\tau}$ (subject to the usual constraints), as long as $\kappa$ is ineffable enough.

	\section{Controlling the Slenderness at $\kappa$}
	
	We will start with the easiest construction regarding $\ISP$: In this section we will define a forcing $\dM_0(\tau,\mu,\kappa)$ which forces $2^{\tau}=\mu^+=\kappa$ and $\ISP(\tau^+,\kappa,\lambda)$ (if $\kappa$ is $\lambda$-ineffable), answering Problem 4.31 given by Mohammadpour in \cite{MohammadpourRoadCompactness}. Additionally, it will force $\neg\ISP(\tau,\kappa,\kappa)$. Thus we are able to control exactly how slender lists must be to have guaranteed ineffable branches.
	
	For this section, fix regular cardinals $\tau<\mu<\kappa$ such that $\tau^{<\tau}=\tau$ and $\kappa$ is inaccessible.
	
	For technical reasons, for any ordinal $\gamma$, we let $\Add(\tau,\gamma)$ be the ${<}\,\tau$-supported product of $\Add(\tau)$ of length $\gamma$, i.e. conditions are partial functions $p$ on $\gamma$ of size ${<}\,\tau$ such that for all $\alpha\in\dom(p)$, $p(\alpha)\in\Add(\tau)$, ordered in the usual way.
	
	\begin{mydef}\label{DefM0}
		Let $\nu\in(\mu,\kappa]$ be inaccessible. $\dM_0(\tau,\mu,\nu)$ consists of pairs $(p,q)$ such that
		\begin{enumerate}
			\item $p\in\Add(\tau,\nu)$
			\item $q$ is a ${<}\,\mu$-sized partial function on $\nu$ such that whenever $\gamma\in\dom(q)$, $\gamma=\delta+1$ for an inaccessible cardinal $\delta$ and $q(\gamma)$ is an $\Add(\tau,\gamma)$-name for a condition in $\dot{\Coll}(\check{\mu},\check{\delta})$.
		\end{enumerate}
		We define $(p',q')\leq(p,q)$ if
		\begin{enumerate}
			\item $p'\leq p$
			\item $\dom(q')\supseteq\dom(q)$ and for all $\gamma\in\dom(q)$,
			$$p'\uhr\gamma\Vdash q'(\gamma)\leq q(\gamma)$$
		\end{enumerate}
	\end{mydef}
	
	The forcing $\dM_0$ was used by Levine in \cite{LevineDisjointStatSeq} to give an easier construction of a model with a disjoint stationary sequence on an arbitrary double successor cardinal and disjoint stationary sequences on two successive cardinals.
	
	We can obviously view $\dM_0(\tau,\mu,\nu)$ as an ordering on some product $\Add(\tau,\nu)\times\dT(\tau,\mu,\nu)$, which is based.
	
	\begin{mylem}\label{M0Prop}
		Let $\nu\in(\mu,\kappa]$ be inaccessible.
		\begin{enumerate}
			\item $\dM_0(\tau,\mu,\nu)$ is ${<}\,\tau$-directed closed.
			\item $\dM_0(\tau,\mu,\nu)$ is $\nu$-Knaster,
			\item The base ordering on $\dM_0(\tau,\mu,\nu)$ is $\tau^+$-Knaster,
			\item The term ordering on $\dM_0(\tau,\mu,\nu)$ is ${<}\,\mu$-closed,
			\item The ordering is iteration-like.
		\end{enumerate}
	\end{mylem}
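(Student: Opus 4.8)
The plan is to treat $\dM_0(\tau,\mu,\nu)$ as the order $R$ on the product $\Add(\tau,\nu)\times\dT(\tau,\mu,\nu)$, already observed to be based, and to verify each clause by a coordinatewise argument, reusing the computations of the preceding worked example for Definition \ref{OrdersOnProducts}. Throughout I write a condition as $(p,q)$ with $p\in\Add(\tau,\nu)$ and $q$ a ${<}\,\mu$-sized partial function of names for collapse conditions, and I repeatedly use that extending $p$ only strengthens the forced inequalities $p\uhr\gamma\Vdash q'(\gamma)\leq q(\gamma)$, so that ``stronger conditions force more''.

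For (1) and (4) the point is that both components are sufficiently closed. Given a directed family of size $\iota<\tau$ (for (1)) or a descending sequence of length $\iota<\mu$ with fixed first coordinate $p$ (for (4)), I would take the union of the first coordinates (a legitimate condition, since $\Add(\tau)$ is ${<}\,\tau$-closed and supports stay of size $<\tau$, respectively the coordinate is fixed) and, for the second coordinate, the union of the domains (still of size $<\mu$ by regularity of $\mu$, using $\iota<\tau<\mu$ in case (1)). For each $\gamma$ in the union, the relevant names $q_{\alpha}(\gamma)$ form, below $p\uhr\gamma$, a directed (respectively descending) family of length $<\mu$ in $\dot{\Coll}(\mu,\delta)$, which is forced to be ${<}\,\mu$-directed closed; mixing the resulting lower bounds over a maximal antichain below $p\uhr\gamma$ yields a name $q(\gamma)$ witnessing the lower bound. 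Since ${<}\,\tau$-directed closure only involves families of size $<\tau<\mu$, the ${<}\,\mu$-closure of the collapse suffices in both cases.

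For (3), the base ordering is just $\Add(\tau,\nu)$ (taking $q_0=q_1=\emptyset$ shows $p\,(b(R))\,p'$ iff $p\leq p'$), so $\tau^+$-Knaster is the standard Cohen computation: a $\Delta$-system argument on the ${<}\,\tau$-sized supports together with $\tau^{<\tau}=\tau$ (which bounds the number of restrictions to the root by $\tau^{<\tau}=\tau$) produces a pairwise compatible subfamily of size $\tau^+$. For (5), basedness is known and the refinement property is immediate since stronger conditions force more; the projection and mixing properties are established exactly as in the preceding worked example, the latter using that $\Add(\tau,\nu)$ is atomless. Concretely I would split $p$ at coordinate $0$ into incompatible $p_0,p_1$, note that $p_0\uhr\gamma$ and $p_1\uhr\gamma$ remain incompatible for every $\gamma>0$ (in particular for all $\gamma\in\dom(q_0)\cup\dom(q_1)$, which are successors of inaccessibles above $\mu$), and mix the names accordingly.

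The main work is (2). Here I would run a $\Delta$-system argument on the sets $\supp(p_i)\cup\dom(q_i)$, which have size $<\mu$; since $\nu$ is inaccessible (regular and strong limit, so $\theta^{<\mu}<\nu$ for all $\theta<\nu$), any $\nu$-sized family admits a $\Delta$-subsystem of size $\nu$ with some root $r$ of size $<\mu$. The delicate step is to make the conditions agree on $r$: the restrictions $p_i\uhr r$ range over at most $\tau^{<\mu}<\nu$ possibilities, while for each $\gamma\in r$ I would first replace $q_i(\gamma)$ by an equivalent \emph{nice} $\Add(\tau,\gamma)$-name, of which there are fewer than $\nu$ because $|\Add(\tau,\gamma)|<\nu$, antichains have size $<\nu$, and $\nu$ is a strong limit. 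Thinning to a family of size $\nu$ on which both $p_i\uhr r$ and $q_i\uhr r$ are constant, I claim any two surviving conditions are compatible: their supports and domains meet only in $r$, where they literally coincide, so the coordinatewise unions $p_i\cup p_j$ and $q_i\cup q_j$ form a common lower bound. I expect this name-counting at the root to be the only genuinely nonroutine point; everything else is bookkeeping with the order relation.
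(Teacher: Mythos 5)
Your proposal is correct and takes essentially the same route as the paper: the base ordering is identified with $\Add(\tau,\nu)$ (hence $\tau^+$-Knaster), the ${<}\,\mu$-closure of the term ordering is proved coordinatewise via the maximal principle, and the projection, refinement and mixing properties are verified exactly as in the paper's proof, including the key trick for mixing of splitting $p$ at a coordinate below all elements of $\dom(q_0)\cup\dom(q_1)$ so that the incompatibility of $p_0\uhr\gamma$ and $p_1\uhr\gamma$ persists. The only difference is that you spell out parts (1) and (2) --- which the paper dismisses as ``standard'' --- and your $\Delta$-system argument with nice-name counting at the root (using the inaccessibility of $\nu$) is a correct rendering of that standard proof.
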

	
	\begin{proof}
		The statements (1) and (2) are standard.
		
		We give the proofs for (3) and (4) in order to familiarize ourselves with the definitions.
		
		Regarding the base ordering, we observe that $(p,q)\leq(p',q)$ if and only if $p\leq p'$ in the ordering on $\Add(\tau,\gamma)$. So the base ordering is equal to $\Add(\tau,\gamma)$ which is $(2^{<\tau})^+=\tau^+$-Knaster.
		
		To show (4), let $(p,q_{\alpha})_{\alpha<\delta}$ (for $\delta<\mu$) be a descending sequence. Let $x:=\bigcup_{\alpha<\delta}\dom(q_{\alpha})$. $|x|<\mu$. On $x$, define a function $q$ as follows: Given $\gamma\in x$, let $\alpha_0$ be such that $\gamma\in\dom(q_{\alpha_0})$ (so $\gamma\in\dom(q_{\alpha})$ for all $\alpha\geq\alpha_0)$. Then $(q_{\alpha}(\gamma))_{\alpha_0\leq\alpha<\delta}$ is forced by $p$ to be a descending sequence in $\dot{\Coll}(\check{\mu},\check{\delta})$, so by the maximal principle we can fix $q(\gamma)$ which is forced to be a lower bound. It follows that $(p,q)$ is a lower bound of $(p,q_{\alpha})_{\alpha<\delta}$.
		
		Regarding the projection property, given $(p',q')\leq(p,q)$, let $q''$ be a function with domain $\dom(q')$ such that for all $\alpha\in\dom(q')$, $q''(\alpha)$ is forced by $p'$ to be equal to $q'(\alpha)$ and by conditions incompatible to $p'$ to be equal to $q(\alpha)$ (or $\emptyset$, if $q(\alpha)$ is not defined). Clearly, $q''$ is as required.
		
		$\dM_0$ has the refinement property since stronger conditions force more.
		
		For the mixing property, let $(p,q_0),(p,q_1)\leq (p,q)$. Let $p_0,p_1$ be extensions of $p$ such that $p_0(0)$ and $p_0(1)$ are incompatible. Construct a function $q'$ with domain $x:=\dom(q_0)\cup\dom(q_1)$ as follows: Let $\alpha\in x$. Then $p_0\uhr\alpha$ and $p_1\uhr\alpha$ are incompatible since $\alpha$ is the successor of a cardinal. So we can let $q'(\alpha)$ be such that it is forced by $p_0\uhr\alpha$ to be equal to $q_0(\alpha)$ (or $\emptyset$, if $q_0(\alpha)$ is not defined) and by conditions incompatible with $p_0\uhr\alpha$ (in particular, $p_1\uhr\alpha$) to be equal to $q_1(\alpha)$ (or $\emptyset$, if $q_1(\alpha)$ is not defined). It follows that $(p,q')\leq(p,q)$ and for $i=0,1$, $(p_i,q')\leq(p,q_i)$.
	\end{proof}
	
	As is common when working with variants of Mitchell Forcing, we will explicitly compute a version of the quotient order:
	
	\begin{mydef}
		Let $\nu\in(\mu,\kappa)$ be inaccessible. Let $G$ be $\dM_0(\tau,\mu,\nu)$-generic. In $V[G]$, let $\dM_0(G,\tau,\mu,\kappa\smallsetminus\nu)$ consist of pairs $(p,q)$ such that
		\begin{enumerate}
			\item $p\in\Add(\tau,\kappa\smallsetminus\nu)$
			\item $q$ is a ${<}\,\mu$-sized partial function on $\kappa\smallsetminus\nu$ such that for all $\gamma\in\dom(q)$, $\gamma=\delta+1$ for an inaccessible cardinal $\nu$ and $q(\gamma)$ is an $\Add(\tau,\gamma\smallsetminus\nu)$-name for a condition in $\dot{\Coll}(\check{\mu},\check{\nu})$.
		\end{enumerate}
		We let $(p',q')\leq(p,q)$ if
		\begin{enumerate}
			\item $p'\leq p$
			\item $\dom(q')\supseteq\dom(q)$ and for all $\gamma\in\dom(q)$
			$$p'\uhr(\gamma\smallsetminus\nu)\Vdash q'(\gamma)\leq q(\gamma)$$
		\end{enumerate}
	\end{mydef}
	
	Again, $\dM_0(G,\tau,\mu,\kappa\smallsetminus\nu)$ has very similar properties to $\dM_0(\tau,\mu,\kappa)$. In particular, because we are not immediately collapsing $\nu$ but adding a subset of $\tau$ first, we obtain that the order also has the mixing property.
	
	\begin{mylem}\label{M0QuotientProp}
		Let $\nu\in(\mu,\kappa)$ be inaccessible and let $G$ be $\dM_0(\tau,\mu,\nu)$-generic. In $V[G]$, the following holds:
		\begin{enumerate}
			\item $\dM_0(G,\tau,\mu,\kappa\smallsetminus\nu)$ is ${<}\,\tau$-directed closed.
			\item $\dM_0(G,\tau,\mu,\kappa\smallsetminus\nu)$ is $\kappa$-Knaster,
			\item The base ordering on $\dM_0(G,\tau,\mu,\kappa\smallsetminus\nu)$ is $\tau^+$-Knaster,
			\item The term ordering on $\dM_0(G,\tau,\mu,\kappa\smallsetminus\nu)$ is ${<}\,\mu$-closed,
			\item The ordering is iteration-like.
		\end{enumerate}
	\end{mylem}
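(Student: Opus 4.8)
The plan is to mirror the proof of Lemma~\ref{M0Prop} almost verbatim, since $\dM_0(G,\tau,\mu,\kappa\smallsetminus\nu)$ is built exactly like $\dM_0(\tau,\mu,\kappa)$ but over the interval $[\nu,\kappa)$ and with the collapse names taken over $\Add(\tau,\gamma\smallsetminus\nu)$ rather than $\Add(\tau,\gamma)$. First I would record the cardinal arithmetic we need in $V[G]$. Since $|\dM_0(\tau,\mu,\nu)|=\nu<\kappa$ and that forcing is $\nu$-Knaster by Lemma~\ref{M0Prop}, it preserves the regularity of $\tau$ and $\mu$ and the inaccessibility of every cardinal in $(\nu,\kappa]$. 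In particular, in $V[G]$ the cardinal $\kappa$ is still inaccessible (hence a strong limit, with $2^{\theta}<\kappa$ for all $\theta<\kappa$), and every $\delta\in(\nu,\kappa)$ remains inaccessible, so $|\dot{\Coll}(\mu,\delta)|=\delta^{<\mu}=\delta$ there.

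For items (1), (3) and (4) the arguments of Lemma~\ref{M0Prop} go through unchanged. The ${<}\,\tau$-directed closure follows by taking unions of the Cohen parts (a ${<}\,\tau$-support product of ${<}\,\tau$-closed forcings is ${<}\,\tau$-directed closed) together with unions of the collapse domains (of size ${<}\,\mu$, as $\tau<\mu$ and $\mu$ is regular in $V[G]$), with lower bounds for the $\dot{\Coll}$-names produced via the maximal principle. The base ordering is again just $\Add(\tau,\kappa\smallsetminus\nu)$, which is $\tau^+$-Knaster; and the term ordering is the disjoint union of the section orderings, each a product of ${<}\,\mu$-closed $\dot{\Coll}$-term forcings, hence ${<}\,\mu$-closed.

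For (2), the $\kappa$-Knaster property, I would run the standard $\Delta$-system argument. Given $\kappa$ many conditions, the combined supports $\supp(p_i)\cup\dom(q_i)$ have size ${<}\,\mu<\kappa$, so by inaccessibility of $\kappa$ in $V[G]$ there is a subfamily of size $\kappa$ forming a $\Delta$-system with root $r$. On $r$ I then refine so that the conditions agree there: the number of Cohen traces $p\uhr r$ is ${<}\,\kappa$, and, crucially, the number of nice $\Add(\tau,\gamma\smallsetminus\nu)$-names for $\dot{\Coll}(\mu,\delta)$-conditions is also ${<}\,\kappa$, since both $|\Add(\tau,\gamma\smallsetminus\nu)|$ and $\delta$ lie below $\kappa$ and $\kappa$ is a strong limit in $V[G]$. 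Thus there are ${<}\,\kappa$ possible traces $(p\uhr r,\,q\uhr r)$, and a subfamily of size $\kappa$ agreeing on $r$ is pairwise compatible by unioning Cohen parts and collapse domains. This is the step where the $V[G]$-arithmetic of the first paragraph is actually used.

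Finally, for (5) I would verify the four properties making the order iteration-like. Basedness, the projection property (using names that revert to $q$ on conditions incompatible with $p'$), and the refinement property (stronger conditions force more) are checked exactly as in Lemma~\ref{M0Prop}. The one point deserving care—and the reason the remark preceding the lemma stresses that we ``add a subset of $\tau$ first''—is the mixing property. To split two conditions below a collapse coordinate $\gamma=\delta+1$ we need an available Cohen coordinate in $\gamma\smallsetminus\nu=[\nu,\delta+1)$; the potentially worrying case is $\gamma=\nu+1$ (the collapse of $\nu$ itself), where the only such coordinate is $\nu$, i.e.\ a single copy of $\Add(\tau)$. But that copy is atomless, so $p_0\uhr(\gamma\smallsetminus\nu)$ and $p_1\uhr(\gamma\smallsetminus\nu)$ can be made incompatible at $\nu$, and the mixing construction of Lemma~\ref{M0Prop} then applies at every collapse coordinate (since $\delta\geq\nu$ always puts $\nu$ below $\gamma$). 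I expect this mixing verification to be the only genuinely new point, everything else being a transcription of the earlier lemma combined with the preservation facts recorded at the outset.
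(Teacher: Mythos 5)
Your proposal is correct and takes essentially the same route as the paper: the paper's proof likewise just transcribes Lemma \ref{M0Prop}, noting that the only genuinely new point is the mixing property, which it handles exactly as you do—choosing $p_0,p_1$ with $p_0(\nu)$ and $p_1(\nu)$ incompatible, which is available because every collapse coordinate lies strictly above $\nu$ (any $q$ is only defined on a subset of $(\nu,\kappa)$), i.e.\ because a subset of $\tau$ is added before $\nu$ is collapsed.
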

	
	\begin{proof}
		This is entirely similar to Lemma \ref{M0Prop}. To show the mixing property, choose $p_0$ and $p_1$ such that $p_0(\nu)$ and $p_1(\nu)$ are incompatible (using that any $q$ is only defined on a subset of $(\nu,\kappa)$)
	\end{proof}
	
	Applying Theorem \ref{ApproxProp}, we obtain:
	
	\begin{mycol}\label{M0Approx}
		Let $\nu\in(\mu,\kappa)$ be inaccessible and let $G$ be $\dM_0(\tau,\mu,\nu)$-generic. In $V[G]$, the forcing $\dM_0(G,\tau,\mu,\kappa\smallsetminus\nu)$ has the ${<}\,\tau^+$-approximation property.
	\end{mycol}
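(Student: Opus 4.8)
The plan is to apply Theorem~\ref{ApproxProp} directly to the quotient forcing $\dM_0(G,\tau,\mu,\kappa\smallsetminus\nu)$, working inside $V[G]$ and taking the approximation parameter to be $\delta=\tau^+$. By Lemma~\ref{M0QuotientProp}(5) this order, viewed as an order $R$ on the product $\dP\times\dQ$ of its base factor $\dP=\Add(\tau,\kappa\smallsetminus\nu)$ with its term-space factor $\dQ$, is iteration-like, so the only two hypotheses of Theorem~\ref{ApproxProp} left to check are that $(\dP,b(R))^2$ is $\tau^+$-cc.\ and that the term ordering $(\dP\times\dQ,t(R))$ is strongly ${<}\,\tau^+$-distributive.

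For the chain condition on the square of the base ordering I would use Lemma~\ref{M0QuotientProp}(3), which states that the base ordering is $\tau^+$-Knaster in $V[G]$. The point is that a product of two $\tau^+$-Knaster posets is again $\tau^+$-Knaster: given a family $(p_i,p_i')_{i<\tau^+}$ of conditions in $(\dP,b(R))^2$, first thin the index set down to a set of size $\tau^+$ on which the first coordinates are pairwise compatible, then thin it further (still of size $\tau^+$) so that the second coordinates become pairwise compatible as well; on the resulting index set all of the pairs are pairwise compatible. Hence $(\dP,b(R))^2$ is $\tau^+$-Knaster and in particular $\tau^+$-cc.

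For the distributivity of the term ordering I would invoke Lemma~\ref{M0QuotientProp}(4), which gives that the term ordering is ${<}\,\mu$-closed. Since $\tau<\mu$, every descending sequence of length ${\leq}\,\tau$ (equivalently, of length ${<}\,\tau^+$) has a lower bound, so the term ordering is ${<}\,\tau^+$-closed; it is therefore $\tau^+$-strategically closed, with COM simply playing a lower bound at each stage, and hence strongly ${<}\,\tau^+$-distributive by the remark following the definition of strategic closure. Spelled out, to build the descending $\tau^+$-sequence meeting a prescribed family $(D_\alpha)_{\alpha<\tau^+}$ of open dense sets one recurses on $\alpha<\tau^+$: at each stage the sequence constructed so far has length ${\leq}\,\tau<\mu$ and so admits a lower bound, which is then extended into $D_\alpha$.

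With all three hypotheses of Theorem~\ref{ApproxProp} verified for $\delta=\tau^+$, the desired conclusion follows at once. I do not expect any genuine obstacle, since all the structural content has already been isolated in Lemma~\ref{M0QuotientProp} and Theorem~\ref{ApproxProp}; the only step calling for a moment's attention is the passage from ${<}\,\mu$-closure to strong ${<}\,\tau^+$-distributivity, where one must observe that the partial sequences arising in the construction never reach length $\mu$, a fact guaranteed precisely by the hypothesis $\tau<\mu$.
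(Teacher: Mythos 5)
Your proposal is correct and takes essentially the same route as the paper: the paper obtains the corollary by applying Theorem~\ref{ApproxProp} to the iteration-like order established in Lemma~\ref{M0QuotientProp}, exactly as you do. The two verifications you spell out --- that the square of the $\tau^+$-Knaster base ordering is $\tau^+$-cc., and that the ${<}\,\mu$-closed term ordering is strongly ${<}\,\tau^+$-distributive since $\tau^+\leq\mu$ --- are precisely the steps the paper leaves implicit.
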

	
	$\dM_0(G,\tau,\mu,\kappa\smallsetminus\nu)$ actually brings us from $V[G]$ to an extension by $\dM_0(\tau,\mu,\kappa)$:
	
	\begin{mylem}\label{M0Embedding}
		Let $\nu\in(\mu,\kappa]$ be inaccessible. There is a dense embedding from $\dM_0(\tau,\mu,\kappa)$ into $\dM_0(\tau,\mu,\nu)*\dM_0(\Gamma,\tau,\mu,\kappa\smallsetminus\nu)$.
	\end{mylem}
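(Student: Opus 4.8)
The plan is to realize the dense embedding as the natural ``splitting at $\nu$'' map, exploiting that restriction to $\nu$ is a projection onto $\dM_0(\tau,\mu,\nu)$ and that the tail data can be reinterpreted through the $\Add(\tau,\nu)$-generic. First I would record that $\rho\colon(p,q)\mapsto(p\uhr\nu,q\uhr\nu)$ is a projection from $\dM_0(\tau,\mu,\kappa)$ onto $\dM_0(\tau,\mu,\nu)$: it is manifestly order preserving, and given $(\bar p,\bar q)\leq\rho(p,q)$ the condition $(\bar p\cup(p\uhr(\kappa\smallsetminus\nu)),\,\bar q\cup(q\uhr(\kappa\smallsetminus\nu)))$ lies below $(p,q)$ with $\rho$-image exactly $(\bar p,\bar q)$. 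Writing $h$ for the $\Add(\tau,\nu)$-generic read off an $\dM_0(\tau,\mu,\nu)$-generic $H$ via the base ordering, I would then define $\iota$ by sending $(p,q)$ to the pair whose first coordinate is $(p\uhr\nu,q\uhr\nu)$ and whose second coordinate is an $\dM_0(\tau,\mu,\nu)$-name for $(p\uhr(\kappa\smallsetminus\nu),q^{\ast})$, where for $\gamma\in\dom(q)\smallsetminus\nu$ we put $q^{\ast}(\gamma):=q(\gamma)^{h}$, the reinterpretation of the $\Add(\tau,\gamma)$-name $q(\gamma)$ under $\Add(\tau,\gamma)\cong\Add(\tau,\nu)\times\Add(\tau,\gamma\smallsetminus\nu)$. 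Since $\Coll(\mu,\delta)$ is absolute and $q$ is ${<}\mu$-sized, $q^{\ast}$ is forced to be a legitimate second coordinate in $\dM_0(\Gamma,\tau,\mu,\kappa\smallsetminus\nu)$, so $\iota$ is well defined.

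Next I would verify that $\iota$ is order and incompatibility preserving. Order preservation reduces coordinatewise to the observation that a forcing statement $p'\uhr\gamma\Vdash q'(\gamma)\leq q(\gamma)$ over $\Add(\tau,\gamma)$ becomes, after substituting $h$ for the $\Add(\tau,\nu)$-factor, the statement $(p'\uhr(\kappa\smallsetminus\nu))\uhr(\gamma\smallsetminus\nu)\Vdash (q')^{\ast}(\gamma)\leq q^{\ast}(\gamma)$ over $\Add(\tau,\gamma\smallsetminus\nu)$; this is routine product-forcing bookkeeping, and the coordinates with $\gamma<\nu$ need no reinterpretation at all. For incompatibility I would locate a hypothetical common refinement of $(p,q)$ and $(p',q')$: a failure of a common lower bound must occur either in the $\Add(\tau,\kappa)$-coordinate or at some collapse coordinate $\gamma$, and splitting the witness according to whether $\gamma<\nu$ or $\gamma\geq\nu$ shows $\iota(p,q)\perp\iota(p',q')$. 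The reverse implication is automatic from order preservation.

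The main obstacle is density of the range. Given a condition $(c_0,\dot c_1)$ of the iteration, with $\dot c_1$ a name for $(\dot p_1,\dot q_1)$, I would strengthen $c_0$ to capture $\dot c_1$ by a reinterpreted tail. The crux is a single closure observation: because $\dM_0(\tau,\mu,\nu)$ is ${<}\tau$-directed closed and its term ordering is ${<}\mu$-closed (Lemma \ref{M0Prop}), it adds no short sequences at the relevant levels. Concretely, $\dot p_1$ names a ${<}\tau$-sized condition in $\Add(\tau,\kappa\smallsetminus\nu)$, so by ${<}\tau$-closure it already lies in $V$ and can be decided to equal some $\check r$ below a suitable $c_0'$. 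For the collapse part, the term forcing being ${<}\mu$-closed adds no ${<}\mu$-sequences over $V[h]$; hence both $\dom(\dot q_1)$ (a ${<}\mu$-sized set, covered by a ground-model set $D$ of size ${<}\mu$ using the $\tau^{+}$-cc of $\Add(\tau,\nu)$) and each value $\dot q_1(\gamma)$ already live in $V[h]$ and may be captured as $r(\gamma)^{h}$ for a fixed ground-model $\Add(\tau,\gamma)$-name $r(\gamma)$. I would decide all of this below a single $c_0'\leq c_0$, using directedness to handle the ${<}\mu$ many $\gamma\in D$ simultaneously.

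Finally I would assemble the answer: glueing the decided $<\nu$-part of $c_0'$ with the tail $r$ (on the $\Add$-side) and the names $r(\gamma)$, $\gamma\in D$ (on the collapse side, with trivial values off $D$), produces a condition $(p,q)\in\dM_0(\tau,\mu,\kappa)$ with $(p\uhr\nu,q\uhr\nu)=c_0'$, and by construction $c_0'$ forces the reinterpreted tail of $\iota(p,q)$ to lie below $(\dot p_1,\dot q_1)$. Hence $\iota(p,q)\leq(c_0,\dot c_1)$, giving density of the range and completing the proof. I expect the closure observation pinning $\dom(\dot q_1)$ and the values $\dot q_1(\gamma)$ into $V[h]$ to be the only genuinely delicate point; everything else is coordinatewise verification against Definition \ref{DefM0} and the product decomposition of $\Add(\tau,\gamma)$.
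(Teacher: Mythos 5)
Your embedding is the paper's embedding, and most of your density argument follows the paper's route: decide the Cohen tail $\dot p_1$ using the fact that $\dM_0(\tau,\mu,\nu)$ is ${<}\,\tau$-closed, cover $\dom(\dot q_1)$ by a ground-model set $D$ of size ${<}\,\mu$ using the chain condition, and reinterpret $\Add(\tau,\nu)$-names-for-names as $\Add(\tau,\gamma)$-names. The gap is in how you capture the collapse part. You propose to find, for each $\gamma\in D$ separately, a ground-model name $r(\gamma)$ with $\dot q_1(\gamma)$ captured as $r(\gamma)^{h}$, and then to combine these ${<}\,\mu$ many strengthenings below a single $c_0'$ ``using directedness''. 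But $\dM_0(\tau,\mu,\nu)$ is only ${<}\,\tau$-directed closed, while $|D|$ can be any cardinal below $\mu$, in particular $\geq\tau$; moreover the forcing is not even ${<}\,\tau^+$-distributive (it adds subsets of $\tau$ below every condition), so one cannot meet $\tau$ many, let alone ${<}\,\mu$ many, dense sets of capturing conditions simultaneously. Nor does the ${<}\,\mu$-closure of the term ordering rescue this: an individual capture of $\dot q_1(\gamma)$ in general requires strengthening the Cohen coordinate as well (the Easton-style argument only yields that capturing conditions are dense in the full order, not that they can be reached by term-only extensions), so the per-coordinate decisions need not be compatible in any direction in which you have closure. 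A secondary point needing care: the values $\dot q_1(\gamma)$ are \emph{names}, which need not be ${<}\,\mu$-sized objects, so ``the term forcing adds no ${<}\,\mu$-sequences over $V[h]$'' does not literally place them in $V[h]$; one must pass to the denoted collapse conditions or to equivalent names.

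The repair is exactly the paper's move, and it eliminates any need for closure at the coordinates of $D$: treat $\dot q_1$ as naming a \emph{single} ${<}\,\mu$-sized sequence of ${<}\,\mu$-sized subsets of $V$ (after coding, one ${<}\,\mu$-sized set of ordinals). One application of Easton's lemma (the ${<}\,\mu$-closed term forcing is ${<}\,\mu$-distributive over the $\tau^+$-cc, hence $\mu$-cc, extension $V[h]$, and $V[G]$ is contained in a product extension $V[h][T]$) puts this entire object into $V[h]$, so a \emph{single} strengthening of $(p_0,q_0)$ fixes one $\Add(\tau,\nu)$-name $\sigma'$ for the whole function. After that, producing the $\Add(\tau,\gamma)$-name $q_1(\gamma)$ from $\sigma'(\check\gamma)$ — an $\Add(\tau,\nu)$-name for an $\Add(\tau,\gamma\smallsetminus\nu)$-name — is purely syntactic and requires no further strengthening whatsoever. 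With that step replaced, your argument (including your extra, correct verifications of the projection property and incompatibility preservation, which the paper leaves implicit) goes through and coincides with the paper's proof.
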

	
	\begin{proof}
		As is common, we define $\iota((p,q)):=((p\uhr\nu,q\uhr\nu),(\check{p}\uhr[\nu,\kappa),\overline{q}))$, where $\overline{q}$ is $\dM_0(\tau,\mu,\nu)$-name for a function on $\kappa\smallsetminus\nu$ given by recomputing $q\uhr[\nu,\kappa)$.
		
		It is clear that $\iota$ preserves $\leq$. Let $((p_0,q_0),\sigma)$ be a condition in $\dM_0(\tau,\mu,\nu)*\dM_0(\Gamma,\tau,\mu,\kappa\smallsetminus\nu)$. By strengthening $(p_0,q_0)$ if necessary, we can assume that it forces $\sigma=(\check{p_1},\sigma')$ for some $p_1\in\Add(\tau,\kappa\smallsetminus\nu)$ (because $\dM_0(\tau,\mu,\nu)$ is ${<}\,\tau$-directed closed). $\sigma'$ is forced to be a ${<}\,\mu$-sized sequence of ${<}\,\mu$-sized subsets of $V$, so we can (again, strengthening $(p_0,q_0)$ if necessary) assume that $\sigma'$ is an $\Add(\tau,\nu)$-name. Lastly, by the $\mu$-cc. of $\Add(\tau,\nu)$, there is a set $x\in V$ such that $(p_0,q_0)\Vdash\dom(\sigma')\subseteq\check{x}$. Now we can define a function $q_1$ on $x$ such that for $\gamma\in x$, $q_1(\gamma)$ is the $\Add(\tau,\gamma)$-name corresponding to $\sigma'(\check{\gamma})$, which is an $\Add(\tau,\nu)$-name for an $\Add(\tau,\gamma\smallsetminus\nu)$-name.
		
		It follows that $(p_0\cup p_1,q_0\cup q_1)$ has the property that $\iota(p_0\cup p_1,q_0\cup q_1)\leq((p_0,q_0),\sigma)$.
	\end{proof}
	
	We analyze the cardinals in the extension by $\dM_0(\tau,\mu,\kappa)$
	
	\begin{mylem}
		$\dM_0(\tau,\mu,\kappa)$ preserves cardinals below and including $\mu$, above and including $\kappa$ and forces $2^{\tau}=\kappa=\mu^+$
	\end{mylem}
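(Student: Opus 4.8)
The plan is to handle the three cardinal regions $\leq\tau$, $(\tau,\mu]$, and $\geq\kappa$ separately, and then compute the continuum function. The two outer regions are immediate. By Lemma \ref{M0Prop}(2), $\dM_0(\tau,\mu,\kappa)$ is $\kappa$-Knaster, hence $\kappa$-cc, so it preserves all cardinals and cofinalities ${\geq}\,\kappa$. By Lemma \ref{M0Prop}(1) it is ${<}\,\tau$-directed closed, so it adds no new ${<}\,\tau$-sequences and in particular preserves all cardinals ${\leq}\,\tau$ (and the equality $\tau^{<\tau}=\tau$). For the region $(\tau,\mu]$ I would not argue about $\dM_0$ directly but pass to a larger product extension. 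Since $\dM_0$ is iteration-like (Lemma \ref{M0Prop}(5)), Lemma \ref{PPImpliesProjection} yields a projection from $(\Add(\tau,\kappa),b(R))\times\dS$ onto $\dM_0$, where $\dS$ is the section ordering $s(R,1)$ at the trivial condition; since $b(R)$ is just $\Add(\tau,\kappa)$ and the term ordering is the disjoint union of the section orderings and is ${<}\,\mu$-closed (Lemma \ref{M0Prop}(4)), $\dS$ is ${<}\,\mu$-closed. Any $\dM_0$-generic extension is therefore contained in an $\Add(\tau,\kappa)\times\dS$-extension, and as ``being a cardinal'' is downward absolute between inner models with the same ordinals, it suffices to show that $\Add(\tau,\kappa)\times\dS$ preserves all cardinals ${\leq}\,\mu$.

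To see this, force in the order $\dS$ then $\Add(\tau,\kappa)$. Forcing with $\dS$ preserves all cardinals ${\leq}\,\mu$ and, being ${<}\,\mu$-closed, adds no new subsets of any ordinal ${<}\,\tau$; hence $\tau^{<\tau}=\tau$ (equivalently $2^{<\tau}=\tau$) still holds in $V[\dS]$ and the poset $\Add(\tau,\kappa)$ is computed identically there. A $\Delta$-system argument using $2^{<\tau}=\tau$ then shows that $\Add(\tau,\kappa)$ remains $\tau^+$-Knaster in $V[\dS]$, so it preserves all cardinals ${\geq}\,\tau^+$, in particular every cardinal in $(\tau,\mu]$. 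Combined with the preservation already provided by $\dS$, the product preserves all cardinals ${\leq}\,\mu$, which by the previous paragraph gives the claim for $\dM_0$.

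It remains to compute $2^{\tau}$ and the value of $\kappa$. Since $\kappa$ is inaccessible, $|\dM_0(\tau,\mu,\kappa)|=\kappa$, and being $\kappa$-cc, the number of nice $\dM_0$-names for subsets of $\tau$ is at most $\kappa^{\tau}=\kappa$, giving $2^{\tau}\leq\kappa$ in the extension; the base ordering $\Add(\tau,\kappa)$ adds $\kappa$ many distinct subsets of $\tau$, which survive under the projection, so $2^{\tau}\geq\kappa$ and thus $2^{\tau}=\kappa$. For $\kappa=\mu^+$, observe that for every inaccessible $\delta\in(\mu,\kappa)$ the coordinate $\delta+1$ can be forced into $\dom(q)$ on a dense set, so (factoring $\dM_0$ through Lemma \ref{M0Embedding}) the generic produces a $\Coll(\mu,\delta)$-generic collapsing $\delta$ to $\mu$; as the inaccessibles are unbounded in $\kappa$, every cardinal in $(\mu,\kappa)$ acquires size $\mu$. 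Together with the preservation of $\mu$ and of $\kappa$ this yields $\kappa=\mu^+$, and therefore $2^{\tau}=\kappa=\mu^+$.

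The main obstacle is the region $(\tau,\mu]$: the two factors of $\dM_0$ pull in opposite directions (a $\tau^+$-cc base against a ${<}\,\mu$-closed term part), and the clean way to reconcile them is exactly the projection-plus-downward-absoluteness manoeuvre above, where one must check carefully that $\Add(\tau,\kappa)$ is left unchanged by $\dS$ and stays $\tau^+$-Knaster afterwards. A secondary subtlety is that $\kappa=\mu^+$ genuinely requires the inaccessibles below $\kappa$ to be unbounded; this is automatic in the applications, where $\kappa$ is $\lambda$-ineffable and hence Mahlo.
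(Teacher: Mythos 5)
Your proof is correct and follows essentially the same route as the paper: directed closure for cardinals ${\leq}\,\tau$, the projection (via Lemma \ref{PPImpliesProjection}) from the product of the $\tau^+$-Knaster base ordering and the ${<}\,\mu$-closed term ordering for the interval $[\tau^+,\mu]$, the $\kappa$-cc for cardinals ${\geq}\,\kappa$, nice names plus the projection onto $\Add(\tau,\kappa)$ for $2^{\tau}=\kappa$, and the interleaved collapses for $\kappa=\mu^+$; the paper states these steps more tersely but identically in substance. Your closing caveat is also well taken: the paper's phrase ``every cardinal in $(\mu,\kappa)$ is collapsed to $\mu$'' silently uses that the inaccessibles are unbounded in $\kappa$, which mere inaccessibility of $\kappa$ does not guarantee but which holds in all of the paper's applications, since $\lambda$-ineffable cardinals are Mahlo.
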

	
	\begin{proof}
		By a nice name argument, $\dM_0(\tau,\mu,\kappa)$ forces $2^{\tau}=\kappa$: Any $\dM_0(\tau,\mu,\kappa)$-name for a subset of $\tau$ be can viewed as function from $\tau$ into the set of maximal antichains in $\dM_0(\tau,\mu,\kappa)$. Since $\dM_0(\tau,\mu,\kappa)$ is $\kappa$-cc and has size $\kappa$, that set has size $\kappa^{<\kappa}=\kappa$ by the inaccessibility of $\kappa$. By the same argument, there are at most $\kappa$ functions from $\tau$ into the set of maximal antichains in $\dM_0(\tau,\mu,\kappa)$ and so there are (up to equivalence) at most $\kappa$ many $\dM_0(\tau,\mu,\kappa)$-names for subsets of $\tau$. However, $\dM_0(\tau,\mu,\kappa)$ clearly projects onto $\Add(\tau,\kappa)$ and therefore forces $2^{\tau}\geq\kappa$.
		
		Because $\dM_0(\tau,\mu,\kappa)$ is ${<}\,\tau$-directed closed, it preserves cardinals below and including $\tau$. Because we can project onto it from the product of a $\tau^+$-cc. and a ${<}\,\mu$-closed poset, it preserves cardinals in the interval $[\tau^+,\mu]$. Lastly, every cardinal in $(\mu,\kappa)$ is collapsed to $\mu$ and cardinals above and including $\kappa$ are preserved by the $\kappa$-cc.
	\end{proof}
	
	The last thing left to show is
	
	\begin{mysen}\label{M0ISP}
		Let $\lambda\geq\kappa$ be a cardinal such that $\lambda^{<\kappa}=\lambda $ and $\kappa$ is $\lambda$-ineffable. $\dM_0(\tau,\mu,\kappa)$ forces $\ISP(\tau^+,\kappa,\lambda)$.
	\end{mysen}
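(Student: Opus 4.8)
The plan is to derive the theorem by verifying the three hypotheses of Theorem \ref{MainTheorem} for the poset $\dP:=\dM_0(\tau,\mu,\kappa)$ with $\delta=\tau^+$. Hypothesis (1) is given, and hypothesis (2) holds because $\dP$ has size $\kappa\leq\lambda$ and is $\kappa$-Knaster by Lemma \ref{M0Prop}. All of the work lies in hypothesis (3): given a $(\kappa,\lambda)$-list $e$, a large $\Theta$ and $x\in[H(\Theta)]^{<\kappa}$, I must produce a $\lambda$-ineffability witness $M$ for $\kappa$ with respect to $e$ satisfying (3)(a)--(c). First I would apply Lemma \ref{IneffEmbed} to $e$ and to the enlarged parameter set $x\cup\{\dP\}\cup(\mu+1)$, obtaining $M\prec H(\Theta)$ with $\nu:=M\cap\kappa$ inaccessible, $[M\cap\lambda]^{<\nu}\subseteq M$, $\{\dP\}\cup x\subseteq M$, and a witnessing $b\in M$. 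Including $\mu+1$ forces $\nu>\mu$, so $\nu\in(\mu,\kappa)$ is inaccessible and Lemma \ref{M0Embedding} and Corollary \ref{M0Approx} both apply at $\nu$. Since $\tau^+\leq\mu\subseteq M$, the collapse $\pi\colon M\to N$ fixes $\tau,\mu,\tau^+$ and sends $\kappa$ to $\nu$, so that $\pi(\dP)=\dM_0(\tau,\mu,\nu)$ and $\pi(\delta)=\tau^+$. Conditions (3)(a) and (3)(b) are then immediate from the choice of $M$.

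The heart of the argument is to identify the generic $\pi[G\cap M]$. By Lemma \ref{M0Embedding}, $\dM_0(\tau,\mu,\nu)$ is (equivalent to) a complete suborder of $\dP$ via the first coordinate of $\iota$, so for any $\dP$-generic $G$ over $V$ the induced filter $G_0:=\{(p\uhr\nu,q\uhr\nu)\mid(p,q)\in G\}$ is $\dM_0(\tau,\mu,\nu)$-generic over $V$. Every condition $(p,q)\in\dP\cap M$ has $\dom(p),\dom(q)\subseteq M\cap\kappa=\nu$, since these domains have size below $\tau$ and $\mu$ respectively and $\tau,\mu\subseteq M$; hence $(p,q)\in\dM_0(\tau,\mu,\nu)$ and $\pi$ fixes it (using $[M\cap\lambda]^{<\nu}\subseteq M$ to see that its suitably coded transitive closure lies in $M$). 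Therefore $\pi[G\cap M]\subseteq G_0$. Now $\pi[G\cap M]$ is $\pi(\dP)$-generic over $N$ (by Lemma \ref{ClosureAfterForcing}, which gives $M[G]\cap V=M$, together with Lemma \ref{PiExtension}), while $G_0$ is generic over $V\supseteq N$; since one generic filter cannot properly contain another, $\pi[G\cap M]=G_0$. In particular $\pi[G\cap M]$ is $\pi(\dP)$-generic over $V$.

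Finally I would establish the approximation property demanded in (3)(c). By Lemma \ref{M0Embedding}, $V[G]$ arises from $V[G_0]=V[\pi[G\cap M]]$ by forcing with the quotient $\dM_0(\pi[G\cap M],\tau,\mu,\kappa\smallsetminus\nu)$, and by Corollary \ref{M0Approx} this quotient has the ${<}\,\tau^+$-approximation property in $V[\pi[G\cap M]]$. As $\pi(\delta)=\tau^+$, the pair $(V[\pi[G\cap M]],V[G])$ has the ${<}\,\pi(\delta)$-approximation property, which is exactly (3)(c). Theorem \ref{MainTheorem} then yields that $\dP$ forces $\ISP(\tau^+,\kappa,\lambda)$.

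I expect the main obstacle to be the identification $\pi[G\cap M]=G_0$: one must confirm both that $\pi$ genuinely fixes the conditions of $\dM_0(\tau,\mu,\nu)$ lying in $M$ (this is where the closure hypothesis $[M\cap\lambda]^{<\nu}\subseteq M$ is essential, in order to absorb the $\Add(\tau,\gamma)$-names occurring in the second coordinates and to ensure $N$ computes $\dM_0(\tau,\mu,\nu)$ correctly) and that $\dM_0(\tau,\mu,\nu)$ really sits as a complete suborder so that $G_0$ is $V$-generic. Everything else reduces to bookkeeping with the already-established structural properties of $\dM_0$ and its quotient from Lemmas \ref{M0Prop} and \ref{M0QuotientProp}.
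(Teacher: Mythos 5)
Your proposal is correct and takes essentially the same route as the paper: both verify the hypotheses of Theorem \ref{MainTheorem} using a witness from Lemma \ref{IneffEmbed} (so that $\pi(\dM_0(\tau,\mu,\kappa))=\dM_0(\tau,\mu,M\cap\kappa)$), and both obtain the required approximation property from Lemma \ref{M0Embedding} together with Corollary \ref{M0Approx}. The only difference is one of detail: the paper simply asserts that $\pi[G\cap M]$ is $\dM_0(\tau,\mu,M\cap\kappa)$-generic over $V$, whereas you justify it, correctly, by noting that conditions of $\dP$ in $M$ are fixed by $\pi$ and lie in $\dM_0(\tau,\mu,\nu)$, that restriction is a projection so $G_0$ is $V$-generic, and that two nested filters both generic over $N$ must coincide.
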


	\begin{proof}
		Clearly $\dM_0(\tau,\mu,\kappa)$ is of size $\kappa$ and $\kappa$-cc. Let $e$ be a $(\kappa,\lambda)$-list. Let $\Theta$ be large. Choose any $\lambda$-ineffability witness $M$ for $\kappa$ with respect to $e$ as in Lemma \ref{IneffEmbed}. If $G$ is $\dM_0(\tau,\mu,\kappa)$-generic over $V$, $\pi[G\cap M]$ is $\pi(\dM_0(\tau,\mu,\kappa))=\dM_0(\tau,\mu,M\cap\kappa)$-generic over $V$. By Lemma \ref{M0Embedding}, $V[G]$ is an extension of $V[\pi[G\cap M]]$ by forcing with $\dM_0(G,\tau,\mu,\kappa\smallsetminus(M\cap\kappa))$, which has the ${<}\,\tau^+$-approximation property by Corollary \ref{M0Approx}. So by Theorem \ref{MainTheorem}, $\dM_0(\tau,\mu,\kappa)$ forces $\ISP(\tau^+,\kappa,\lambda)$.
	\end{proof}

	We can show that we have exactly $\ISP(\tau^+,\kappa,\lambda)$ (regarding the slenderness of lists):

	\begin{mylem}\label{NegISPTau}
		$\dM_0(\tau,\mu,\kappa)$ forces that $\ISP(\tau,\kappa,\kappa)$ fails.
	\end{mylem}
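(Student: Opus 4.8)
The plan is to work in an extension $V[G]$ by $\dM_0(\tau,\mu,\kappa)$ and exhibit a single $\tau$-slender $(\kappa,\kappa)$-list with no ineffable branch; by definition this refutes $\ISP(\tau,\kappa,\kappa)$. The two facts I will use about $V[G]$ are that $\kappa=\mu^+$ with $2^{\tau}=\kappa$ (from the previous lemma) and that $2^{<\tau}=\tau$ still holds: since $\dM_0(\tau,\mu,\kappa)$ is ${<}\,\tau$-directed closed it adds no new subsets of any $\sigma<\tau$, so $2^{\sigma}$ is computed as in $V$ and $2^{<\tau}=\tau^{<\tau}=\tau$ is preserved. Fixing in $V[G]$ a bijection $B\colon\kappa\to\mathcal{P}(\tau)$ (possible as $2^{\tau}=\kappa$), I define the list by $f(a):=B(\sup a)\cap a$ for $a\in[\kappa]^{<\kappa}$. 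This is a legitimate $(\kappa,\kappa)$-list, since $\sup a<\kappa$ by regularity and $f(a)\subseteq a$; crucially, $f(a)\subseteq\tau$ for every $a$.

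Slenderness should be essentially automatic from the concentration of $f$ on $\tau$. For every sufficiently large $\Theta$, take the club $C$ of $M\prec H(\Theta)^{V[G]}$ of size ${<}\,\kappa$ with $\tau+1\subseteq M$ and $B,\tau,\kappa\in M$. Then $\tau\subseteq M$ forces, for any $z\in M$ with $|z|<\tau$, that $\mathcal{P}(z\cap\tau)^{V[G]}\subseteq M$: indeed $|\mathcal{P}(z\cap\tau)|\leq 2^{<\tau}=\tau$, and a surjection $\tau\to\mathcal{P}(z\cap\tau)$ lying in $M$ has its range inside $M$ because $\tau\subseteq M$. For $M\in C$ and $z\in[\kappa]^{<\tau}\cap M$ we then compute $f(M\cap\kappa)\cap z=B(\sup(M\cap\kappa))\cap(z\cap\tau)$, which is an element of $\mathcal{P}(z\cap\tau)^{V[G]}\subseteq M$. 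Hence $f$ is $\tau$-slender. (The same computation fails at the level $\tau^+$, since then $z$ may enumerate all of $\tau$ and $f(M\cap\kappa)\cap z$ recovers the whole ``new'' set $B(\sup(M\cap\kappa))\notin M$; this is exactly what one expects, given that $\ISP(\tau^+,\kappa,\lambda)$ holds.)

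Finally I rule out ineffable branches. Suppose $b\subseteq\kappa$ and $S=\{a:f(a)=b\cap a\}$ is stationary in $[\kappa]^{<\kappa}$. Since $f(a)\subseteq\tau$ for all $a$, and for each $\xi\in[\tau,\kappa)$ the set $\{a:\xi\in a\}$ is club, some $a\in S$ contains $\xi$, whence $\xi\notin f(a)=b\cap a$ and so $\xi\notin b$; thus $b\subseteq\tau$. On the club $\{a:\tau\subseteq a\}$ we have $f(a)=B(\sup a)$ and $b\cap a=b$, so $B(\sup a)=b$ holds on the stationary set $S\cap\{a:\tau\subseteq a\}$. As $B$ is a bijection onto $\mathcal{P}(\tau)$ and $b\subseteq\tau$, there is a unique $\alpha_0$ with $B(\alpha_0)=b$, so $B(\sup a)=b$ forces $\sup a=\alpha_0$; hence $S\cap\{a:\tau\subseteq a\}\subseteq\{a:\sup a=\alpha_0\}\subseteq\{a:\alpha_0+1\notin a\}$, which is nonstationary, a contradiction. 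The main point to get right is the slenderness verification, specifically that $2^{<\tau}=\tau$ survives the forcing and that this, together with $\tau\subseteq M$, places all relevant small subsets of $\tau$ into $M$; the no-branch argument is then a short stationarity computation driven by $f$ taking values in $\mathcal{P}(\tau)$ and by the injectivity of $B$.
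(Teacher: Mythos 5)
Your proposal is correct and is essentially the paper's own argument: the paper abstracts it into a general lemma (if $\delta<\theta$ are regular with $2^{<\delta}<\theta$ and $2^{\delta}\geq\theta$, then $\ISP(\delta,\theta,\theta)$ fails, witnessed by the list $e(a)=x_a$ for ordinals $a\geq\delta$, where $(x_\alpha)_{\alpha<\theta}$ enumerates distinct subsets of $\delta$) and then applies it in $V[G]$ using that ${<}\,\tau$-directed closure preserves $\tau^{<\tau}=\tau$ while $2^{\tau}=\kappa$. Your inline version with $f(a)=B(\sup a)\cap a$ is the same construction and uses the same two ingredients --- $2^{<\tau}=\tau$ for slenderness, injectivity of the enumeration plus a stationarity computation for the absence of ineffable branches --- differing only in that you index by $\sup a$ instead of restricting to ordinal $a$.
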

	
	We prove a more general statement:
	
	\begin{mylem}
		Assume $\delta<\theta$ are regular cardinals such that $2^{<\delta}<\theta$ and $2^{\delta}\geq\theta$. Then $\ISP(\delta,\theta,\theta)$ fails.
	\end{mylem}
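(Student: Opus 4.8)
The plan is to build an explicit $\delta$-slender $(\theta,\theta)$-list $f$ that has no ineffable branch, exploiting the gap $2^{<\delta}<\theta\le 2^{\delta}$. First I would invoke $2^{\delta}\ge\theta$ to fix an injective sequence $\langle b_\alpha\mid\alpha<\theta\rangle$ of pairwise distinct subsets of $\delta$, and then set $f(a):=b_{\sup(a)}\cap a$ for $a\in[\theta]^{<\theta}$. Since $\theta$ is regular and $|a|<\theta$ we have $\sup(a)<\theta$, so this is well-defined; moreover $f(a)\subseteq a$ and $|f(a)|\le\delta<\theta$, so $f$ is a genuine $(\theta,\theta)$-list. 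The whole construction hinges on the tension between ``enough distinct branch-candidates to diagonalize against every $b$'' (which needs many subsets of $\delta$) and ``slenderness'' (which can only be controlled because small subsets of $\delta$ are few), and this tension is resolved precisely by the two hypotheses.

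For $\delta$-slenderness the main arithmetical point, and the step I expect to be the real obstacle, is that $\delta^{<\delta}=2^{<\delta}$ for regular $\delta$: any $h\colon\gamma\to\delta$ with $\gamma<\delta$ has bounded range by regularity, so $\delta^{\gamma}\le\delta\cdot 2^{<\delta}=2^{<\delta}$, whence $|[\delta]^{<\delta}|=2^{<\delta}<\theta$. Given sufficiently large $\Theta$, I would take $C$ to be the set of $M\in[H(\Theta)]^{<\theta}$ with $\langle b_\alpha\rangle\in M$, $\delta\subseteq M$ and $[\delta]^{<\delta}\subseteq M$; this is a club since the absorbed set has size $2^{<\delta}<\theta$. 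For $M\in C$, because $\delta\subseteq M$ we get $f(M\cap\theta)=b_{\sup(M\cap\theta)}$, a subset of $\delta$. Then for any $z\in[\theta]^{<\delta}\cap M$ the set $f(M\cap\theta)\cap z=b_{\sup(M\cap\theta)}\cap(z\cap\delta)$ is a subset of $z\cap\delta$ of size $<\delta$, hence lies in $[\delta]^{<\delta}\subseteq M$. Thus $f(M\cap\theta)\cap z\in M$, and slenderness holds regardless of how the ``index'' $\sup(M\cap\theta)$ was chosen.

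Finally I would show that no $b\subseteq\theta$ is an ineffable branch. The set $D:=\{a\in[\theta]^{<\theta}\mid\delta\subseteq a\}$ is a club, and on $D$ we have $f(a)=b_{\sup(a)}$ since $b_{\sup(a)}\subseteq\delta\subseteq a$. Hence if $a\in D$ satisfies $a\cap b=f(a)$, then $a\cap b=b_{\sup(a)}\subseteq\delta$, and intersecting with $\delta$ (using $\delta\subseteq a$) gives $b\cap\delta=b_{\sup(a)}$. If $b\cap\delta$ is not among the $b_\alpha$, no $a\in D$ can satisfy the equation; otherwise injectivity of $\langle b_\alpha\rangle$ pins down $\sup(a)=\alpha^{*}$, where $\alpha^{*}$ is the unique index with $b_{\alpha^{*}}=b\cap\delta$. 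In either case $\{a\mid f(a)=a\cap b\}\subseteq\{a\mid\sup(a)=\alpha^{*}\}\cup([\theta]^{<\theta}\setminus D)$. The first set on the right is non-stationary because $\{a\mid\sup(a)>\alpha^{*}\}$ is a club, and the second is non-stationary as the complement of a club; since the non-stationary ideal on $[\theta]^{<\theta}$ is an ideal, their union is non-stationary. Therefore $b$ is not an ineffable branch, and as $b$ was arbitrary, $f$ witnesses that $\ISP(\delta,\theta,\theta)$ fails.
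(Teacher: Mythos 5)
Your proof is correct and takes essentially the same approach as the paper: both diagonalize with $\theta$ pairwise distinct subsets of $\delta$ as list values, use $|[\delta]^{<\delta}|=2^{<\delta}<\theta$ to absorb $[\delta]^{<\delta}$ into club-many models for $\delta$-slenderness, and derive non-existence of ineffable branches from the injectivity of the enumeration. The only cosmetic difference is that the paper makes the list nontrivial only on ordinals $a\geq\delta$ and passes to stationarity of $S\cap\theta$ in $\theta$, whereas you index by $\sup(a)$ and run the non-stationarity argument entirely inside $[\theta]^{<\theta}$; both are fine.
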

	
	\begin{proof}
		Let $(x_{\alpha})_{\alpha<\theta}$ enumerate different subsets of $\delta$. Let $e$ be the following $(\theta,\theta)$-list: $e(a)=x_a$ if $a\geq\delta$ is an ordinal and $\emptyset$ otherwise.
		
		\begin{myclaim}
			$e$ is $\delta$-slender.
		\end{myclaim}
		
		\begin{proof}
			Let $\Theta$ be large and $C$ be the club of all $M\in[H(\Theta)]^{<\theta}$ such that $[\delta]^{<\delta}\subseteq M$ (here we use $2^{<\delta}<\theta$). If $M\in C$ and $x\in[\theta]^{<\delta}$, $e(M\cap\theta)\cap x\in[\delta]^{<\delta}$ (since $e(M\cap\theta)$ is either a subset of $\delta$ or empty) and thus in $M$.
		\end{proof}
		
		\begin{myclaim}
			$e$ does not have an ineffable branch.
		\end{myclaim}
		
		\begin{proof}
			Otherwise there is $b$ such that $S:=\{a\in[\theta]^{<\theta}\;|\;e(a)=b\cap a\}$ is stationary in $[\theta]^{<\theta}$. In particular $S\cap\theta$ is stationary in $\theta$. However, since $e(a)$ is a subset of $\delta$ for every $a$, $e$ is constant on $(S\cap\theta)\smallsetminus\delta$ which is an obvious contradiction.
		\end{proof}
		
		Thus we have produced a $\delta$-slender $(\theta,\theta)$-list without an ineffable branch.
	\end{proof}
	
	Lemma \ref{NegISPTau} follows because $\tau^{<\tau}=\tau<\kappa$ in $V$, which is preserved into $V[G]$ by the ${<}\,\tau$-directed closure of $\dM_0(\tau,\mu,\kappa)$. We obtain another Corollary:
	
	\begin{mycol}
		Assume $\kappa$ is not a strong limit and $\ISP(\delta,\kappa,\kappa)$ holds. Then $2^{<\delta}\geq\kappa$.
	\end{mycol}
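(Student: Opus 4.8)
The plan is to prove the contrapositive: assuming $2^{<\delta}<\kappa$ while $\kappa$ is not a strong limit, I will exhibit a $\delta$-slender $(\kappa,\kappa)$-list with no ineffable branch, contradicting $\ISP(\delta,\kappa,\kappa)$. Throughout I use that $\kappa$ is regular (otherwise $\ISP(\delta,\kappa,\kappa)$ is degenerate). Since $\kappa$ is not a strong limit there is a cardinal below $\kappa$ whose power set has size at least $\kappa$, so let $\sigma$ be least with $2^{\sigma}\geq\kappa$. By minimality $2^{\rho}<\kappa$ for every $\rho<\sigma$, whence $2^{<\sigma}=\sup_{\rho<\sigma}2^{\rho}<\kappa$, the supremum of fewer than $\kappa$ cardinals below the regular $\kappa$. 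Moreover $\delta\leq\sigma$, since any $\rho<\delta$ has $2^{\rho}\leq 2^{<\delta}<\kappa$ and hence $\rho<\sigma$.

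The idea is to run the construction of the preceding Lemma at $\sigma$ in place of $\delta$. Fix an injection $\alpha\mapsto x_{\alpha}$ of $\kappa$ into $\mathcal{P}(\sigma)$, which exists because $2^{\sigma}\geq\kappa$, and define the $(\kappa,\kappa)$-list $e$ by $e(a):=x_{a}$ when $a$ is an ordinal with $\sigma\leq a<\kappa$ and $e(a):=\emptyset$ otherwise; note $e(a)\subseteq\sigma\subseteq a$. The \emph{no ineffable branch} part is verbatim as in the Lemma: a branch $b$ would give $b\cap a=e(a)=x_{a}\subseteq\sigma$ for stationarily many ordinals $a\geq\sigma$, forcing $x_{a}=b\cap\sigma$ to be constant and contradicting injectivity. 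For \emph{$\delta$-slenderness}, suppose first that $\cf(\sigma)\geq\delta$ (in particular whenever $\sigma$ is regular), and fix a cofinal sequence $\langle\sigma_{i}:i<\cf(\sigma)\rangle$. Any $z\in[\kappa]^{<\delta}$ meets $\sigma$ in a set of size $<\delta\leq\cf(\sigma)$, hence bounded, so $z\cap\sigma\subseteq\sigma_{i}$ for some $i$. As $\bigcup_{i}[\sigma_{i}]^{<\delta}$ has size at most $\cf(\sigma)\cdot 2^{<\sigma}<\kappa$, the set of $M\in[H(\Theta)]^{<\kappa}$ containing it is club; for such $M$ one gets $e(M\cap\kappa)\cap z\in[\sigma_{i}]^{<\delta}\subseteq M$. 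This witnesses $\delta$-slenderness and completes the contradiction in this case.

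The main obstacle is the remaining regime, where $\sigma$ is \emph{singular with} $\cf(\sigma)<\delta$ (this case genuinely occurs, e.g.\ under $\GCH$ with $\kappa=\aleph_{\omega+1}$, $\sigma=\aleph_{\omega}$, $\delta=\omega_{1}$). Here a set $z\cap\sigma$ of size $<\delta$ may be cofinal in $\sigma$, and even if every block $e(M\cap\kappa)\cap\sigma_{i}$ lies in $M$, the union $e(M\cap\kappa)\cap z$ need not: the $\cf(\sigma)$-indexed sequence of blocks is coded by $e(M\cap\kappa)\notin M$, so $M$ is not closed under the required gluing. Equivalently, in this regime there is \emph{no} regular $\delta'\in[\delta,\kappa)$ with $2^{<\delta'}<\kappa\leq 2^{\delta'}$, so the Lemma cannot be invoked at any index and the "subsets of $\sigma$" encoding is too coarse. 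The hard step is thus to replace this encoding by one adapted to the singular structure of $\sigma$—for instance coding the $\kappa$-many values along a scale or a fixed system of almost disjoint families on the blocks $[\sigma_{i},\sigma_{i+1})$—so that distinct values are maintained (killing branches) while the small traces $e(M\cap\kappa)\cap z$ are reconstructible inside $M$ from data of size $<\kappa$. I expect the verification that such traces reflect into $M$, rather than the branch-killing, to be the delicate point.
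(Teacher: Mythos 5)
Your setup (contrapositive, minimal $\sigma$ with $2^{\sigma}\geq\kappa$, the list $e(a)=x_{a}$, and the branch-killing argument) is correct, and your Case 1 argument for $\cf(\sigma)\geq\delta$ is a complete slenderness proof. But the proposal has a genuine gap: in the case $\cf(\sigma)<\delta$ you prove nothing — you only sketch a program (coding along scales or almost disjoint families) and yourself flag its delicate point as unresolved. So the contrapositive is not established precisely in the regime you correctly show is nonempty (e.g.\ $\GCH$ with $\kappa=\aleph_{\omega+1}$, $\sigma=\aleph_{\omega}$, $\delta=\omega_{1}$). Your diagnosis that no regular $\delta'\in[\delta,\kappa)$ satisfies $2^{<\delta'}<\kappa\leq 2^{\delta'}$ there is accurate, and it is worth noting that the paper's own proof — which invokes its Lemma at the minimal $\mu$ with $2^{\mu}\geq\kappa$ and then uses that every $\mu$-slender list is $\delta$-slender — quietly faces the same issue, since that Lemma is stated only for regular cardinals and its club ``$[\mu]^{<\mu}\subseteq M$'' need not exist when $\mu$ is singular (indeed $\mu^{<\mu}$ can equal $\kappa$ in your $\aleph_{\omega}$ example). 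So something extra really is needed; you just did not supply it.

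The missing idea is far simpler than what you propose, and it removes the case split entirely. Whatever $M$ and $z\in[\kappa]^{<\delta}\cap M$ are, the set $e(M\cap\kappa)\cap z$ is an \emph{element of} $\mathcal{P}(z)$, and $|\mathcal{P}(z)|\leq 2^{<\delta}<\kappa$. Since $\kappa$ is regular, the collection $C$ of all $M\in[H(\Theta)]^{<\kappa}$ such that $\mathcal{P}(w)\subseteq M$ for every $w\in[\kappa]^{<\delta}\cap M$ is club: it is closed under increasing unions, and it is unbounded by an $\omega$-step closure argument in which each step adds, for each of the fewer than $\kappa$ many $w$ in the current model, the fewer than $\kappa$ many elements of $\mathcal{P}(w)$. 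For $M\in C$ one immediately gets $e(M\cap\kappa)\cap z\in\mathcal{P}(z)\subseteq M$, so $e$ is $\delta$-slender with no hypothesis whatsoever on $\cf(\sigma)$ — no scales, no almost disjoint coding. (In fact this argument shows that whenever $2^{<\delta}<\kappa$, \emph{every} $(\kappa,\kappa)$-list is $\delta$-slender, so the entire content of the corollary is the branch-killing step you already have; the same observation is also what is needed to justify the paper's application of its Lemma at a possibly singular $\mu$.)
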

	
	\begin{proof}
		Let $\mu<\kappa$ be minimal such that $2^{\mu}\geq\kappa$. If $\mu<\delta$ we are done so assume $\mu\geq\delta$. Then $2^{<\mu}<\kappa$ and $2^{\mu}\geq\kappa$, hence $\ISP(\mu,\kappa,\kappa)$ fails. But $\mu\geq\delta$, so $\ISP(\delta,\kappa,\kappa)$ fails as well since every $\mu$-slender list is also $\delta$-slender.
	\end{proof}

	The previous Corollary means that we must always pay a certain price to obtain strong versions of $\ISP$ (namely, blowing up $2^{<\delta}$).

	\section{$\ISP$ and Cardinal Arithmetic}

	We give easier constructions of two results which were known at $\omega_2$: We show that it is consistent that $\ISP(\tau^{++},\tau^{++},\lambda)$ holds and $2^{\tau}\neq 2^{\tau^+}$ and that $\ISP(\tau^+,\tau^{++},\lambda)$ is consistent together with an arbitrarily high value of $2^{\tau}$ (this was previously shown for $\tau=\omega$ in \cite{CoxKruegerStronglyProper}). In the model for the first statement, we also have that $\ISP(\tau^+,\tau^{++},\tau^{++})$ fails, answering a question of Weiss (which was previously answered in \cite{LambieStrongTreeKurepaGuessMod} for $\tau=\omega$).
	
	\begin{mydef}\label{DefM1}
		Let $\tau<\mu<\nu$ be regular cardinals such that $\tau^{<\tau}=\tau$, $\mu^{<\mu}=\mu$ and $\nu$ is inaccessible. For any ordinal $\gamma$, define $\dM_1(\tau,\mu,\nu,\gamma):=\dM_0(\tau,\mu,\nu)\times\Add(\mu,\gamma)$.
	\end{mydef}
	
	For the rest of this section, fix regular cardinals $\tau<\mu<\kappa\leq\lambda$ such that $\tau^{<\tau}=\tau$, $\mu^{<\mu}=\mu$ and $\kappa$ is inaccessible. Also use the same definition for $\Add(\tau,\gamma)$ as in the previous section.
	
	For $\dM_1$, we can show directly that the ``quotient ordering`` has the correct approximation property (of course building on the results for $\dM_0$).
	
	\begin{mylem}\label{LemmaApprox1}
		Let $\nu\in(\mu,\kappa]$ be an inaccessible cardinal and $\gamma$ an ordinal. Let $G'\times H'$ be $\dM_1(\tau,\mu,\nu,\gamma)$-generic. In $V[G'\times H']$, $\dM_0(G',\tau,\mu,\kappa\smallsetminus\nu)\times\Add(\mu,\lambda\smallsetminus \gamma)$ has the ${<}\,\nu$-approximation property.
	\end{mylem}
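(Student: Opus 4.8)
The plan is to present the forcing as an iteration-like order whose base ordering is square-$\mu$-cc and whose term ordering is ${<}\,\mu$-closed, apply Theorem \ref{ApproxProp} with $\delta=\mu$ to get the ${<}\,\mu$-approximation property, and then observe that the ${<}\,\mu$-approximation property is automatically inherited upward to the ${<}\,\nu$-approximation property. The reason the naive choice $\delta=\nu$ cannot be used directly is that the only closed part of the forcing is ${<}\,\mu$-closed and $\mu<\nu$, so no term ordering here is strongly ${<}\,\nu$-distributive; the key conceptual point is that it suffices to prove the \emph{stronger} ${<}\,\mu$-approximation property, since the approximation property only weakens as its cardinal parameter grows.

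I would first fix the iteration-like structure, treating the substantive case $\nu<\kappa$ (when $\nu=\kappa$ the collapse factor is empty and the forcing reduces to $\Add(\mu,\lambda\smallsetminus\gamma)$, which is handled separately and is not needed in the applications, where $\nu=M\cap\kappa<\kappa$). By Lemma \ref{M0QuotientProp}, over $V[G']$ the forcing $\dM_0(G',\tau,\mu,\kappa\smallsetminus\nu)$ is iteration-like with base ordering $\Add(\tau,\kappa\smallsetminus\nu)$, which is $\tau^+$-Knaster, and with a ${<}\,\mu$-closed term ordering $\dot\dT$. Since $H'$ is generic for the ${<}\,\mu$-closed forcing $\Add(\mu,\gamma)$ and $\Add(\tau,\kappa\smallsetminus\nu)$ adds no new ${<}\,\tau$-sequences, $2^{<\tau}=\tau$ persists and all these properties continue to hold in $V[G'\times H']$. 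To accommodate the extra factor $\Add(\mu,\lambda\smallsetminus\gamma)$, I would write $\dM_0(G',\tau,\mu,\kappa\smallsetminus\nu)$ as the iteration $\Add(\tau,\kappa\smallsetminus\nu)*\dot\dT$ of its Cohen part with (names for) its collapse part, and then use the standard equivalence $(\dP*\dot\dR)\times\dS\cong\dP*(\dot\dR\times\dS)$ (with $\dS$ entered via its canonical name) to present the whole product as an iteration over the atomless base $\Add(\tau,\kappa\smallsetminus\nu)$. By the Example following Definition \ref{OrdersOnProducts}, this iteration is iteration-like, with base ordering $\Add(\tau,\kappa\smallsetminus\nu)$ and term ordering the termspace forcing of $\dot\dT\times\Add(\mu,\lambda\smallsetminus\gamma)$.

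Next I would verify the hypotheses of Theorem \ref{ApproxProp} for $\delta=\mu$. The base squared is $\Add(\tau,\kappa\smallsetminus\nu)^2$, which is $\tau^+$-Knaster, hence $\mu$-cc. Both the collapse termspace $\dot\dT$ and $\Add(\mu,\lambda\smallsetminus\gamma)$ are forced to be ${<}\,\mu$-closed, so by the usual maximal-principle argument (exactly as in the proof of Lemma \ref{M0Prop}(4), taking lower bounds coordinatewise along descending sequences of length ${<}\,\mu$) their product termspace is ${<}\,\mu$-closed, and therefore strongly ${<}\,\mu$-distributive. Theorem \ref{ApproxProp} then gives that $\dM_0(G',\tau,\mu,\kappa\smallsetminus\nu)\times\Add(\mu,\lambda\smallsetminus\gamma)$ has the ${<}\,\mu$-approximation property over $V[G'\times H']$. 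Finally, since $\mu\leq\nu$, every set in $([x]^{<\mu})^{V[G'\times H']}$ lies in $([x]^{<\nu})^{V[G'\times H']}$, so any $x$ all of whose ${<}\,\nu$-approximations belong to $V[G'\times H']$ has in particular all of its ${<}\,\mu$-approximations there; thus the ${<}\,\mu$-approximation property immediately yields the ${<}\,\nu$-approximation property.

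The step I expect to be the main obstacle is the mixing property in the iteration-like verification: the factor $\Add(\mu,\lambda\smallsetminus\gamma)$ contains incompatible conditions that cannot be amalgamated on the nose, so one cannot simply place it in the term ordering of a literal product. Routing it through $\Add(\tau,\kappa\smallsetminus\nu)$-names — that is, rewriting the product as an iteration over the atomless Cohen base so that the amalgamation is performed by splitting the base condition — is precisely what restores mixing, and it is the reason the atomlessness of $\Add(\tau,\kappa\smallsetminus\nu)$ (hence $\nu<\kappa$) is used. Everything else is a routine preservation check (that the relevant chain-condition and closure survive into $V[G'\times H']$) together with the elementary monotonicity of the approximation property.
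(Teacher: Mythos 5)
Your argument breaks down at exactly the step you flag as the fix for mixing, and the failure is fatal rather than technical: after routing $\Add(\mu,\lambda\smallsetminus\gamma)$ through $\Add(\tau,\kappa\smallsetminus\nu)$-names, the resulting term ordering is \emph{not} ${<}\,\mu$-closed, and not even strongly ${<}\,\tau^+$-distributive, so Theorem \ref{ApproxProp} cannot be applied to your presentation for any $\delta$. The point is that your third coordinates are names for conditions of the \emph{ground-model} poset $\Add(\mu,\lambda\smallsetminus\gamma)$ (they must be, since the lemma concerns the product), and such names can encode the Cohen generic. Write $W:=V[G'\times H']$, fix a base coordinate $\alpha_0\in\kappa\smallsetminus\nu$ with canonical name $\dot{x}$ for the generic subset of $\tau$ added there, and work inside one coordinate of $\Add(\mu,\lambda\smallsetminus\gamma)$, so that conditions are partial functions from $\mu$ to $2$ of size ${<}\,\mu$ lying in $W$. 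INC wins the completeness game on your term ordering as follows: at stage $2i+1$, COM's move $\dot{s}_{2i}$ has at most $\tau$ many possible values (it names a member of a ground-model set and the base is $\tau^+$-cc), all in $W$, so their domains are bounded by some $\eta_i<\mu$; INC replays COM's Mitchell coordinate and plays a name $\dot{s}_{2i+1}$ forced to equal $\dot{s}_{2i}\cup\{(\eta_i,\dot{\epsilon}_i)\}$, where $\dot{\epsilon}_i$ is the truth value of ``$\check{i}\in\dot{x}$''. Each value of $\dot{s}_{2i+1}$ is a one-point extension of a $W$-condition, hence a legitimate $W$-condition, so this is a legal move. But at the limit stage $\tau<\mu$, any legal move $\dot{s}_{\tau}$ for COM would satisfy $\Vdash\dot{x}=\{i<\check{\tau}\;|\;\dot{s}_{\tau}(\check{\eta}_i)=1\}$, forcing $\dot{x}$ into $\check{W}$, a contradiction. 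This is precisely where your analogy with Lemma \ref{M0Prop}(4) fails: there the coordinates $q(\gamma)$ name conditions in $\dot{\Coll}(\check{\mu},\check{\delta})$ \emph{as computed in the extension}, so a descending sequence of names has a name for its union as a lower bound; for a ground-model poset the union can escape the ground model. Your dilemma is in fact unavoidable: actual conditions in the term part kill mixing (as you noted), names kill distributivity, and moving $\Add(\mu,\lambda\smallsetminus\gamma)$ into the base fails as well because $\Add(\mu,\lambda\smallsetminus\gamma)^2$ is not $\mu$-cc. No single application of Theorem \ref{ApproxProp} to the whole product can succeed.

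This is why the paper never places $\Add(\mu,\lambda\smallsetminus\gamma)$ inside the iteration-like framework, but instead chains two approximation properties through the intermediate model $V[G'][H]$, where $H$ is the full $\Add(\mu,\lambda)^V$-generic induced by $H'$ and a generic $H''$ for $\Add(\mu,\lambda\smallsetminus\gamma)$: first, $\Add(\mu,\lambda\smallsetminus\gamma)$ is $\nu$-Knaster in $W$ and hence has the ${<}\,\nu$-approximation property; second, Theorem \ref{ApproxProp} (with $\delta=\tau^+$) is applied to $\dM_0(G',\tau,\mu,\kappa\smallsetminus\nu)$ alone, but over $V[G'][H]$ --- this works because $\Add(\mu,\lambda)^V$ is ${<}\,\mu$-distributive over $V[G']$ (an Easton argument via the projection of Lemma \ref{PPImpliesProjection}), so no new ${<}\,\mu$-sequences appear and the Mitchell term ordering stays ${<}\,\mu$-closed there; finally the two properties are glued by a covering argument, using the $\nu$-cc of $\Add(\mu,\lambda)$ to cover ${<}\,\nu$-sized sets in $V[G'][H]$ by ${<}\,\nu$-sized sets in $W$. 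Your monotonicity observation (${<}\,\mu$-approximation implies ${<}\,\nu$-approximation) is correct, and your treatment of the degenerate case $\nu=\kappa$ is fine, but the ${<}\,\mu$-approximation property for the full product is not obtainable by your route, so the conclusion is not reached.
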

	
	\begin{proof}
		$\Add(\mu,\lambda\smallsetminus\gamma)$ is $\nu$-Knaster in $V[G'\times H']$ and thus has the ${<}\,\nu$-approximation property. Let $H''$ be $\Add(\mu,\lambda\smallsetminus \gamma)$-generic over $V[G'\times H']$. Then $V[G'\times H'][H'']$ is equal to $V[G'][H]$, where $H$ is $\Add(\mu,\lambda)^V$-generic over $V[G']$. We note that $\Add(\mu,\lambda)^V$ is ${<}\,\mu$-distributive in $V[G']$: By Lemma \ref{M0Prop} and Lemma \ref{PPImpliesProjection} $V[G']$ is contained in an extension by the product of a ${<}\,\mu$-closed and a $\tau^+$-Knaster forcing. In that extension, $\Add(\mu,\lambda)^V$ is clearly ${<}\,\mu$-distributive by Easton's Lemma and thus the same statement holds in the smaller model $V[G']$.
		
		In $V[G']$, $\dM_0(G',\tau,\mu,\kappa\smallsetminus\nu)$ is iteration-like and has $\Add(\tau,\kappa\smallsetminus\nu)$ as its base ordering as well as a ${<}\,\mu$-closed term ordering (by Lemma \ref{M0QuotientProp} and its proof). Thus, in $V[G'][H]$, the ordering is still iteration-like (as this property is absolute), the base ordering is still $\tau^+$-Knaster and the term ordering is still ${<}\,\mu$-closed. Hence $\dM_0(G',\tau,\mu,\kappa\smallsetminus\nu)$ has the ${<}\,\tau^+$-approximation property in $V[G'][H]=V[G'\times H'][H'']$.
		
		Now assume there is $f\in V[G\times H]$ such that $f\cap z\in V[G'\times H']$ for every $z\in[V[G'\times H']]^{<\nu}\cap V[G'\times H']$. Let $z\in V[G'\times H]$ have size ${<}\,\nu$. Because $\Add(\mu,\lambda)$ is $\nu$-Knaster there is $y\in V[G'\times H']$ with $z\subseteq y$ and $|y|<\nu$. Hence $f\cap y\in V[G'\times H']$ and $f\cap z=(f\cap y)\cap z\in V[G'\times H]$. As $z$ was arbitrary and $\dM_0(G',\tau,\mu,\kappa\smallsetminus\nu)$ has the ${<}\,\tau^+$-approximation property in $V[G'\times H']$, $f\in V[G'\times H]$. Now since $\Add(\mu,\lambda\smallsetminus\gamma)$ has the ${<}\,\nu$-approximation property in $V[G'\times H']$, $f\in V[G'\times H']$.
	\end{proof}
	
	A straightforward application of Theorem \ref{MainTheorem} shows:
	
	\begin{mysen}
		Assume $\lambda_0\geq\lambda$ is a regular cardinal with $\lambda_0^{<\kappa}=\lambda_0$ such that $\kappa$ is $\lambda_0$-ineffable. After forcing with $\dM_1(\tau,\mu,\kappa,\lambda)$, $2^{\tau}=\kappa, 2^{\mu}=\lambda$ and $\ISP(\kappa,\kappa,\lambda_0)$ hold.
	\end{mysen}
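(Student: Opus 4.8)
The plan is to derive $\ISP(\kappa,\kappa,\lambda_0)$ from Theorem \ref{MainTheorem}, applied with $\delta=\kappa$ and with the role of ``$\lambda$'' there played by $\lambda_0$, and to check the cardinal arithmetic directly. Recall $\dM_1(\tau,\mu,\kappa,\lambda)=\dM_0(\tau,\mu,\kappa)\times\Add(\mu,\lambda)$. First I would observe that this forcing is $\kappa$-cc, being a product of the $\kappa$-Knaster forcing $\dM_0(\tau,\mu,\kappa)$ (Lemma \ref{M0Prop}) with the $\mu^+$-Knaster forcing $\Add(\mu,\lambda)$, and that, since $\lambda\le\lambda_0$, $\mu<\kappa$ and $\lambda_0^{<\kappa}=\lambda_0$, it has size $\kappa+\lambda^{<\mu}\le\lambda_0$. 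Thus hypotheses (1) and (2) of Theorem \ref{MainTheorem} hold immediately.

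For the cardinal arithmetic, write $V[G\times H]$ with $G$ being $\dM_0(\tau,\mu,\kappa)$-generic and $H$ being $\Add(\mu,\lambda)$-generic. Exactly as in the proof of Lemma \ref{LemmaApprox1}, $\Add(\mu,\lambda)^V$ is ${<}\,\mu$-distributive over $V[G]$ (via Lemma \ref{M0Prop}, Lemma \ref{PPImpliesProjection} and Easton's Lemma), hence adds no new subsets of $\tau$; so $2^\tau$ is already computed in $V[G]$, where it equals $\kappa$, and $\kappa$ is preserved by the $\kappa$-cc. For $2^\mu$, the $\Add(\mu,\lambda)$-generic yields $2^\mu\ge\lambda$, while a nice-name count using the $\kappa$-cc together with $|\dM_1(\tau,\mu,\kappa,\lambda)|\le\lambda^{<\mu}=\lambda$ (from the ambient cardinal arithmetic) bounds $2^\mu$ by $\lambda$; as $\lambda\ge\kappa$ it is preserved, giving $2^\mu=\lambda$.

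The substance lies in verifying hypothesis (3) of Theorem \ref{MainTheorem}. I would fix a $(\kappa,\lambda_0)$-list $e$, a large $\Theta$ and $x\in[H(\Theta)]^{<\kappa}$, enlarging $x$ so that $\mu+1\subseteq x$, and then apply Lemma \ref{IneffEmbed} (with $\lambda_0$ in place of $\lambda$) to obtain a $\lambda_0$-ineffability witness $M\prec H(\Theta)$ for $\kappa$ with respect to $e$ such that $x\subseteq M$, $\nu:=M\cap\kappa$ is inaccessible with $\mu<\nu$, and $[M\cap\lambda_0]^{<\nu}\subseteq M$; as $\dM_1(\tau,\mu,\kappa,\lambda)$ is definable from parameters in $M$, it lies in $M$. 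Letting $\pi\colon M\to N$ be the Mostowski collapse and $\gamma:=\pi(\lambda)=\otp(M\cap\lambda)<\kappa$, since $\pi$ fixes $\tau$ and $\mu$ we get $\pi(\dM_1(\tau,\mu,\kappa,\lambda))=\dM_1(\tau,\mu,\nu,\gamma)$. Now for $\dM_1(\tau,\mu,\kappa,\lambda)$-generic $G$, Lemma \ref{ClosureAfterForcing} gives $M[G]\cap V=M$, and the closure $[M\cap\lambda_0]^{<\nu}\subseteq M$ forces every maximal antichain of $\pi(\dM_1(\tau,\mu,\kappa,\lambda))$ lying in $V$ (of size $<\nu$ by the $\nu$-cc) to lie in $N$; together with Lemma \ref{PiExtension} this shows $\pi[G\cap M]=:G'\times H'$ is $\dM_1(\tau,\mu,\nu,\gamma)$-generic over all of $V$.

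Finally, arguing as in Lemma \ref{M0Embedding} for the $\dM_0$-factor and splitting $\Add(\mu,\lambda)$ into its $M$-coordinates and the rest, $V[G]$ is a generic extension of $V[G'\times H']$ by (an isomorphic copy of) $\dM_0(G',\tau,\mu,\kappa\smallsetminus\nu)\times\Add(\mu,\lambda\smallsetminus\gamma)$, the reindexing of the $\Add$-coordinates being harmless since $|\lambda\smallsetminus M|=|\lambda\smallsetminus\gamma|=\lambda$ and the approximation property is isomorphism-invariant. By Lemma \ref{LemmaApprox1} this quotient has the ${<}\,\nu$-approximation property, and $\nu=\pi(\kappa)=\pi(\delta)$, so $(V[\pi[G\cap M]],V[G])$ has the ${<}\,\pi(\delta)$-approximation property, completing hypothesis (3). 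Theorem \ref{MainTheorem} then yields that $\dM_1(\tau,\mu,\kappa,\lambda)$ forces $\ISP(\kappa,\kappa,\lambda_0)$. I expect the main obstacle to be precisely this last step: aligning the Mostowski collapse of the forcing with the quotient of Lemma \ref{LemmaApprox1}, and in particular confirming that $\pi[G\cap M]$ is generic over all of $V$ and that $V[G]$ is exactly the corresponding quotient extension, which is where the strong closure of $M$ is indispensable.
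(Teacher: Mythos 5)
Your proposal is correct and follows essentially the same route as the paper's proof: it verifies the hypotheses of Theorem \ref{MainTheorem} (with $\delta=\kappa$ and $\lambda_0$ in the role of $\lambda$) using a witness from Lemma \ref{IneffEmbed}, computes the collapse as $\dM_0(\tau,\mu,\nu)\times\Add(\mu,\pi(\lambda))$, identifies the quotient taking $V[\pi[G\cap M]]$ to $V[G]$ as $\dM_0(G',\tau,\mu,\kappa\smallsetminus\nu)\times\Add(\mu,\lambda\smallsetminus\pi(\lambda))$, and applies Lemma \ref{LemmaApprox1}. The extra details you supply (the explicit genericity argument for $\pi[G\cap M]$ and the cardinal arithmetic, which the paper's proof does not address) are sound, except that your count giving $2^{\mu}\leq\lambda$ invokes $\lambda^{<\mu}=\lambda$, which does not follow from the stated hypotheses when $\lambda<\lambda_0$ --- though this is an implicit assumption in the theorem's statement itself rather than a defect of your argument.
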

	
	\begin{proof}
		Again, $\dM_1(\tau,\mu,\kappa,\lambda)$ has size $\leq\lambda_0$ and is $\kappa$-cc since it is the product of two $\kappa$-Knaster posets (this is clear for $\Add(\mu,\lambda)$ and was shown in Lemma \ref{M0Prop} for $\dM_0(\tau,\mu,\kappa)$). Let $e$ be any $(\kappa,\lambda)$-list, $\Theta$ large and $x\in[H(\Theta)]^{<\kappa}$. Choose any $\lambda$-ineffability witness $M$ for $\kappa$ with respect to $e$ as in Lemma \ref{IneffEmbed} with $\tau^+\in M$. We have $\pi(\dM_1(\tau,\mu,\kappa,\lambda))=\dM_0(\tau,\mu,M\cap\kappa)\times\Add(\mu,\pi(\lambda))$. If $G$ is $\dM_1(\tau,\mu,\kappa,\lambda)$-generic, then $\pi[G\cap M]$ is $\dM_0(\tau,\mu,M\cap\kappa)\times\Add(\mu,\pi(\lambda))$-generic over $V$ by its $M\cap\kappa$-cc (again, it is the product of two $M\cap\kappa$-Knaster posets). $V[G]$ is an extension of $V[\pi[G\cap M]]$ by $\dM_0(G',\tau,\mu,\kappa\smallsetminus(M\cap\kappa))\times\Add(\mu,\lambda\smallsetminus\pi(\lambda))$ which has the ${<}\,M\cap\kappa$-approximation property in $V[\pi[G\cap M]]$ by Lemma \ref{LemmaApprox1}. So we can once again apply Theorem \ref{MainTheorem}.
	\end{proof}
	
	We also obtain an answer to another question of Mohammadpour which was previously answered in \cite{LambieStrongTreeKurepaGuessMod} in the case $\omega$: It is consistent that $\ISP(\kappa,\kappa,\lambda_0)$ holds (with $\kappa=\mu^+$) but $\ISP(\mu,\kappa,\kappa)$ fails:
	
	\begin{mylem}
		After forcing with $\dM_1(\tau,\mu,\kappa,\lambda)$, $\ISP(\mu,\kappa,\kappa)$ fails.
	\end{mylem}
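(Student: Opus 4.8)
The plan is to produce, in $V[\dM_1(\tau,\mu,\kappa,\lambda)]$, a single $\mu$-slender $(\kappa,\kappa)$-list with no ineffable branch; by definition of $\ISP$ this is all that is required. I would first note that the general lemma behind Lemma \ref{NegISPTau} is genuinely unavailable here: the preceding theorem gives $2^\tau=\kappa$, and since $\Add(\mu,\lambda)$ is ${<}\,\mu$-closed it adds no new subsets of any $\rho<\mu$, so $2^{<\mu}=\kappa$ and the hypothesis $2^{<\delta}<\theta$ fails at $\delta=\mu$, $\theta=\kappa$. What is new in $V[\dM_1]$ compared to the $\dM_0$-extension is that $2^\mu=\lambda>\kappa$ (I use here the case $\lambda>\kappa$, which is the one relevant to Theorem B), and this is the feature I would exploit.

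The object I would build is a tree $T$ of height $\kappa=\mu^+$ with levels $T_\gamma$ of size $\le\mu$ and at least $\kappa^+$ cofinal branches (a weak $\kappa$-Kurepa tree). A height-$\kappa$ tree with levels of size $\le\mu$ is a standard by-product of collapsing the inaccessible $\kappa$ to $\mu^+$, which $\dM_0$ does; the role of the $\Add(\mu,\lambda)$ factor is precisely to force the number of cofinal branches above $\kappa$ using $2^\mu=\lambda>\kappa$. This is the analogue, for arbitrary $\tau$, of the $\tau=\omega$ analysis in \cite{LambieStrongTreeKurepaGuessMod}. Arranging $T$ on $\kappa$ so that $\bigcup_{\beta<\gamma}T_\beta\subseteq\gamma$ for club-many $\gamma$ and fixing a selector $\gamma\mapsto t_\gamma\in T_\gamma$, I would set $e(\gamma):=\{s\in T: s<_T t_\gamma\}\subseteq\gamma$ (and $e(a):=\emptyset$ otherwise); thus $e(\gamma)$ is the branch of $T$ below the node $t_\gamma$.

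Slenderness is where the small levels are decisive. For $M\prec H(\Theta)$ with $\gamma:=M\cap\kappa\in\kappa$ and $\gamma\ge\mu$ we have $\mu\subseteq M$, and because each $T_\beta$ ($\beta<\gamma$) has size $\le\mu$ and lies in $M$, in fact $T_\beta\subseteq M$; hence every node of level $<\gamma$, and (by elementarity) its predecessor set, already lies in $M$. This hands us $e(\gamma)\cap\beta\in M$ for all $\beta<\gamma$, i.e. all bounded traces. The delicate case is a $z\in[\kappa]^{<\mu}\cap M$ that is cofinal in $\gamma$ — these are unavoidable, since any club in $[H(\Theta)]^{<\kappa}$ contains models $M$ with $\cf(M\cap\kappa)<\mu$ — and for these I would need the partial branch $\langle t_\gamma\uhr\beta:\beta\in z\rangle$ to be recoverable inside $M$, so that $e(\gamma)\cap z=z\cap e(\gamma)\in M$. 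In other words I must know that each branch carved out by the selector is $\mu$-approximated by club-many $M$. This is the heart of the matter, and it is exactly here that the concrete, Cohen-generated structure of the branches of $T$ must be used rather than an abstract Kurepa tree.

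For the absence of an ineffable branch: if $b\subseteq\kappa$ were one, then $b\cap\gamma=e(\gamma)$ on a stationary set $S$, and the coherence of tree-branch-segments forces the nodes $\{t_\gamma:\gamma\in S\}$ to lie on a single cofinal branch $\hat b$ of $T$ with $t_\gamma=\hat b(\gamma)$ for all $\gamma\in S$. Hence it suffices to choose the selector so that no cofinal branch of $T$ meets $\{t_\gamma:\gamma<\kappa\}$ on a stationary set — a diagonalization that the ${\ge}\,\kappa^+$ branches make possible. The main obstacle I anticipate is reconciling this requirement (the selector must ``miss'' every branch stationarily) with the slenderness requirement of the previous paragraph (the selected branches must be $\mu$-approximated by club-many models); carrying out both at once, using the specific tree produced by $\dM_1$, is the crux, whereas the bounded-trace and branch-coherence bookkeeping is routine.
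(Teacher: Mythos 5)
Your proposal is not a proof: the two steps you yourself flag as ``the heart of the matter'' and ``the crux'' --- that the selector's branch segments $\langle t_\gamma\uhr\beta:\beta\in z\rangle$ be recoverable in club-many $M$ with $\cf(M\cap\kappa)<\mu$, and that the selector simultaneously avoid every cofinal branch stationarily --- are exactly the content of the statement, and they genuinely pull against each other. Moreover, the tree you posit has the wrong shape for this purpose. For a tree of height $\kappa$ with levels of size $\leq\mu$, a cofinal trace $e(\gamma)\cap z$ at a model with $\cf(M\cap\kappa)<\mu$ does \emph{not} factor through a single node, which is why you cannot push slenderness through; and the existence in $V[\dM_1]$ of such a tree with ${\geq}\,\kappa^+$ cofinal branches is only asserted, not proved. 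What $\mu$-slenderness really asks for is a family of fewer than $\kappa$ many small sets through which \emph{all} ${<}\,\mu$-sized traces of all values of the list factor; that is provided by a tree of height $\mu$ (where traces of size ${<}\,\mu$ are bounded, by regularity of $\mu$), not height $\kappa$.

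This points to the idea you missed, which is the paper's proof. Your opening diagnosis --- that the Cohen-subset argument behind Lemma~\ref{NegISPTau} is ``genuinely unavailable'' because $2^{<\mu}=\kappa$ in the extension --- conflates the extension's $[\mu]^{<\mu}$ with the ground model's. Let $f(a)$ be the $a$th Cohen subset of $\mu$ added by the $\Add(\mu,\lambda)$ factor when $a\in[\mu,\kappa)$ is an ordinal, and $\emptyset$ otherwise. If $z$ has size ${<}\,\mu$, then $f(a)\cap z=(f(a)\cap\beta)\cap z$ for some $\beta<\mu$ (as $\mu$ stays regular), and $f(a)\cap\beta\in V$ by genericity, since conditions have size ${<}\,\mu$. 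So every trace lies in $([\mu]^{<\mu})^V$, which has size $\mu^{<\mu}=\mu<\kappa$ \emph{computed in $V$}, even though $2^{<\mu}=\kappa$ in the extension; hence any $M$ with $([\mu]^{<\mu})^V\subseteq M$ (a club of such $M$ exists) witnesses $\mu$-slenderness, because $f(M\cap\kappa)\cap z=(f(M\cap\kappa)\cap\beta)\cap z\in M$ whenever $z\in M$. And if $b$ were an ineffable branch, then on a stationary set of ordinals $a\geq\mu$ we would have $f(a)=b\cap a=b\cap\mu$ (since $f(a)\subseteq\mu$ forces $b\cap[\mu,a)=\emptyset$), making $f$ constant there, contradicting that distinct coordinates give distinct Cohen subsets. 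This three-line argument needs no Kurepa tree, no selector, and not even $\lambda>\kappa$: any $\lambda\geq\kappa$ suffices. (In the language of your proposal: the right tree is $({}^{<\mu}2)^V$, of height $\mu$ and size $\mu$, whose cofinal branches in $V[\dM_1]$ include the $\lambda$ Cohen subsets; its nodes are precisely the ground-model approximations through which all traces factor.)
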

	
	\begin{proof}
		As before, let $f(a)$ be the $a$th Cohen subset of $\mu$ added by $\dM_1(\tau,\mu,\kappa,\lambda)$ if $a\geq\mu$ is an ordinal and $\emptyset$ otherwise. Because every ${<}\,\mu$-sized segment of $f(a)$ is in $([\mu]^{<\mu})^{V}$ which has size $\mu<\kappa$, we see that $f$ is $\mu$-slender. However, by previous arguments $f$ cannot have an ineffable branch.
	\end{proof}
	
	Now we show that $\ISP(\tau^+,\kappa,\lambda)$ is consistent with an arbitrarily large continuum.
	
	\begin{mydef}
		Let $\tau<\mu<\nu$ be regular cardinals such that $\tau^{<\tau}=\tau$ and $\nu$ is inaccessible. For any ordinal $\gamma$, define $\dM_2(\tau,\mu,\nu,\gamma):=\dM_0(\tau,\mu,\nu)\times\Add(\tau,\gamma)$.
	\end{mydef}
	
	For the rest of this section, we drop the assumption that $\mu^{<\mu}=\mu$.
	
	We have a very similar Lemma to before (albeit with a stronger approximation property):
	
	\begin{mylem}
		Let $\nu\in(\mu,\kappa)$ be inaccessible and let $G'\times H'$ be $\dM_2(\tau,\mu,\nu,\gamma)$-generic. In $V[G'\times H']$, $\dM_0(G',\tau,\mu,\kappa\smallsetminus\nu)\times\Add(\tau,\lambda\smallsetminus\gamma)$ has the ${<}\,\tau^+$-approximation property.
	\end{mylem}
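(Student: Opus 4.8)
The plan is to realize the product $\dM_0(G',\tau,\mu,\kappa\smallsetminus\nu)\times\Add(\tau,\lambda\smallsetminus\gamma)$ as a single iteration-like order in $W:=V[G'\times H']$ and then to appeal directly to Theorem \ref{ApproxProp} with $\delta=\tau^+$. The key structural decision is that the factor $\Add(\tau,\lambda\smallsetminus\gamma)$ must be absorbed into the \emph{base} ordering rather than the term ordering: unlike the factor $\Add(\mu,\lambda\smallsetminus\gamma)$ in Lemma \ref{LemmaApprox1}, this factor is only ${<}\,\tau$-closed and adds subsets of $\tau$, so it is not ${<}\,\tau^+$-distributive and cannot sit in the term part; moreover it fails the ${<}\,\tau^+$-approximation property, so the two-step factoring argument of Lemma \ref{LemmaApprox1} is unavailable here. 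However it is $\tau^+$-Knaster, so it fits harmlessly into the base. Concretely, writing $\dM_0(G',\tau,\mu,\kappa\smallsetminus\nu)$ as an order on $\Add(\tau,\kappa\smallsetminus\nu)\times\dT$ with term ordering $\dT$, I would reorganize the product as an order $R$ on $\bigl(\Add(\tau,\kappa\smallsetminus\nu)\times\Add(\tau,\lambda\smallsetminus\gamma)\bigr)\times\dT$, with base coordinate $(p,r)$ and term coordinate $q$, declaring $((p',r'),q')\,R\,((p,r),q)$ iff $p'\leq p$, $r'\leq r$, and $q'\leq q$ in the sense of $\dM_0$.

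The first task is to check that $R$ is iteration-like, which is routine and parallels Lemma \ref{M0QuotientProp}. The base ordering $b(R)$ is exactly the product ordering $\Add(\tau,\kappa\smallsetminus\nu)\times\Add(\tau,\lambda\smallsetminus\gamma)$, and the additional coordinate $r$ is inert: it plays no role in the projection, refinement, or mixing witnesses, which are produced from the corresponding $\dM_0$-data precisely as before (for mixing one simply keeps $r$ fixed, a pure product factor requiring no mixing). Next, since $\tau^{<\tau}=\tau$ is preserved into $W$, both $\Add(\tau,\kappa\smallsetminus\nu)$ and $\Add(\tau,\lambda\smallsetminus\gamma)$ are $\tau^+$-Knaster, and hence so is their product $b(R)$; in particular $b(R)^2$ is $\tau^+$-cc., which is the chain-condition hypothesis of Theorem \ref{ApproxProp}.

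The crux is the distributivity hypothesis on $t(R)$, which is (a disjoint union of copies of) the term ordering $\dT$ of $\dM_0(G',\tau,\mu,\kappa\smallsetminus\nu)$. By Lemma \ref{M0QuotientProp} this is ${<}\,\mu$-closed in $V[G']$, hence $\mu$-strategically closed and so strongly ${<}\,\mu$-distributive there; as $\tau^+\leq\mu$ this already gives strong ${<}\,\tau^+$-distributivity in $V[G']$. The decisive point is that $W=V[G'][H']$ arises from $V[G']$ by forcing with $\Add(\tau,\gamma)$, which is $\tau^+$-cc., and — unlike closure — strong ${<}\,\tau^+$-distributivity is preserved by $\tau^+$-cc.\ forcing (cf.\ \cite{JakobDisjointInterval}). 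Thus $t(R)$ remains strongly ${<}\,\tau^+$-distributive in $W$, and Theorem \ref{ApproxProp}, applied in $W$, yields the ${<}\,\tau^+$-approximation property of $R$, that is, of $\dM_0(G',\tau,\mu,\kappa\smallsetminus\nu)\times\Add(\tau,\lambda\smallsetminus\gamma)$.

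I expect the main obstacle to be conceptual rather than computational, and it is exactly this last step: one must resist the temptation to mimic Lemma \ref{LemmaApprox1} by factoring out the extra Cohen forcing, since $\Add(\tau,\lambda\smallsetminus\gamma)$ has no approximation property, and instead exploit that strong distributivity (as opposed to closure) is robust under the $\tau^+$-cc.\ forcing $\Add(\tau,\gamma)$. This is precisely the phenomenon advertised before Theorem \ref{ApproxProp}, and it is what lets the $\dM_2$-construction go through with the weaker support parameter $\tau$ while still delivering the ${<}\,\tau^+$-approximation property.
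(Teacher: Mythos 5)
Your proof is essentially correct, but it is not the paper's proof, and the claim you use to justify your choice of route is false. You assert that $\Add(\tau,\lambda\smallsetminus\gamma)$ ``fails the ${<}\,\tau^+$-approximation property,'' so that the two-step factoring of Lemma \ref{LemmaApprox1} is unavailable. In fact, since $\tau^{<\tau}=\tau$ still holds in $V[G'\times H']$ (the forcing $\dM_2$ is ${<}\,\tau$-closed and preserves $\tau^+$), $\Add(\tau,\lambda\smallsetminus\gamma)$ is $\tau^+$-Knaster there, hence square-$\tau^+$-cc., and therefore \emph{has} the ${<}\,\tau^+$-approximation property --- for instance by Theorem \ref{ApproxProp} itself with trivial term ordering. (The Cohen generic is not a counterexample: its trace on $\tau$ itself, a ground-model set of size ${<}\,\tau^+$, is already new.) The paper's proof opens with exactly this observation and then performs the factoring you call unavailable: it forces further with $\Add(\tau,\lambda\smallsetminus\gamma)$ to reach $V[G'\times H]$ ($H$ the induced $\Add(\tau,\lambda)$-generic), shows via Theorem \ref{ApproxProp} that $\dM_0(G',\tau,\mu,\kappa\smallsetminus\nu)$ has the ${<}\,\tau^+$-approximation property over that model --- using, just as you do, that the term ordering's strong ${<}\,\tau^+$-distributivity survives a $\tau^+$-Knaster extension of $V[G']$ --- and finally combines the two approximation properties with the covering argument at the end of Lemma \ref{LemmaApprox1}.

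Your alternative route is nevertheless valid and genuinely different: you absorb the Cohen factor into the \emph{base} of a single iteration-like order on $\bigl(\Add(\tau,\kappa\smallsetminus\nu)\times\Add(\tau,\lambda\smallsetminus\gamma)\bigr)\times\dT$ and apply Theorem \ref{ApproxProp} once in $V[G'\times H']$. The verification goes through: the rearranged order is iteration-like (the new base coordinate is inert in the projection, refinement and mixing witnesses), the base is a product of $\tau^+$-Knaster orders and hence square-$\tau^+$-cc., and the term ordering is, up to a disjoint union of sections, the $\dM_0$-term ordering, which is ${<}\,\mu$-closed in $V[G']$ and remains strongly ${<}\,\tau^+$-distributive in $V[G'\times H']$ by preservation under the $\tau^+$-cc.\ forcing $\Add(\tau,\gamma)$ --- the same preservation fact the paper invokes. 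What your decomposition buys is a single application of the theorem with no factoring of the extension and no final covering step; what it costs is re-checking iteration-likeness for the merged order. But the dichotomy you draw (``base absorption is forced because factoring fails'') is a misconception rather than a mathematical obstruction: both routes work, and the paper takes the one you rejected.
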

	
	\begin{proof}
		In $V[G'\times H']$, $\Add(\tau,\lambda\smallsetminus \gamma)$ is still $(2^{<\tau})^{V[G'\times H']}=(\tau^+)^{V[G'\times H']}=(\tau^+)^V$-cc., so $\Add(\tau,\lambda\smallsetminus \gamma)$ has the ${<}\,\tau^+$-approximation property in $V[G'\times H']$. Let $H''$ be $\Add(\tau,\lambda\smallsetminus \gamma)$ be $\Add(\tau,\lambda\smallsetminus \gamma)$-generic and $H$ the $\Add(\tau,\lambda)$-generic filter induced by $H'$ and $H''$. In $V[G']$, $\dM_0(G',\tau,\tau^+,\kappa\smallsetminus\nu)$ has a ${<}\,\tau^+$-Knaster base ordering and a ${<}\,\tau^+$-closed term ordering. In $V[G'\times H]$, the base ordering is still ${<}\,\tau^+$-Knaster and the term ordering is at least ${<}\,\tau^+$-strongly distributive, because $V[G'\times H]$ is an extension of $V[G']$ by a $\tau^+$-Knaster forcing. Being iteration-like is absolute and thus $\dM_0(G',\tau,\tau^+,\kappa\smallsetminus\nu)$ has the ${<}\,\tau^+$-approximation property in $V[G'\times H]$. Now proceed as in Lemma \ref{LemmaApprox1}.
	\end{proof}
	
	And we can prove:
	
	\begin{mysen}
		Let $\tau<\kappa\leq\lambda=\lambda^{<\kappa}\leq\lambda_0$ be cardinals such that $\tau^{<\tau}=\tau$ and $\kappa$ is $\lambda_0$-ineffable. After forcing with $\dM_2(\tau,\kappa,\lambda)$, $2^{\tau}=\lambda$ and $\ISP(\tau^+,\kappa,\lambda_0)$ hold.
	\end{mysen}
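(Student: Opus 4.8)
The plan is to verify the hypotheses of Theorem \ref{MainTheorem} with $\delta=\tau^+$ and ineffability target $\lambda_0$, mirroring the preceding computation for $\dM_1$. Write $\dM_2(\tau,\mu,\kappa,\lambda)=\dM_0(\tau,\mu,\kappa)\times\Add(\tau,\lambda)$. Since $|\dM_0(\tau,\mu,\kappa)|=\kappa\leq\lambda$ and $|\Add(\tau,\lambda)|=\lambda^{<\tau}=\lambda$ (as $\lambda^{<\kappa}=\lambda$), the forcing has size $\lambda\leq\lambda_0$ and is $\kappa$-cc.\ as a product of two $\kappa$-Knaster posets (Lemma \ref{M0Prop} for the first factor). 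First I would settle the cardinal arithmetic: $\Add(\tau,\lambda)$ already forces $2^{\tau}\geq\lambda$, while a nice-name count gives the reverse inequality, since the number of antichains is at most $\lambda^{<\kappa}=\lambda$ and hence the number of nice names for a subset of $\tau$ is at most $\lambda^{\tau}\leq\lambda^{<\kappa}=\lambda$ (using $\tau<\kappa$). As $\lambda\geq\kappa$ is preserved by the $\kappa$-cc., this yields $2^{\tau}=\lambda$ in the extension.

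For $\ISP(\tau^+,\kappa,\lambda_0)$ I would apply Theorem \ref{MainTheorem}. Note that $\lambda_0$-ineffability of $\kappa$ yields $\lambda_0^{<\kappa}=\lambda_0$, so that Lemma \ref{IneffEmbed} is available at the target $\lambda_0$. Given a $(\kappa,\lambda_0)$-list $e$, a large $\Theta$ and $x\in[H(\Theta)]^{<\kappa}$, choose a $\lambda_0$-ineffability witness $M$ for $\kappa$ with respect to $e$ as in Lemma \ref{IneffEmbed}, arranging $\tau^+\subseteq M$ and $\dM_2(\tau,\mu,\kappa,\lambda)\in M$ by placing $\tau^++1$ and the forcing into the seed $x$. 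Since $M\cap\kappa$ is inaccessible with $\mu<M\cap\kappa$, the parameters $\tau,\mu$ are fixed by the Mostowski collapse $\pi$ and
$$\pi(\dM_2(\tau,\mu,\kappa,\lambda))=\dM_0(\tau,\mu,M\cap\kappa)\times\Add(\tau,\pi(\lambda)).$$
Because this image is a product of two $(M\cap\kappa)$-Knaster posets, if $G$ is $\dM_2(\tau,\mu,\kappa,\lambda)$-generic then $\pi[G\cap M]$ is $\pi(\dM_2(\tau,\mu,\kappa,\lambda))$-generic over $V$, via the machinery of Lemma \ref{ClosureAfterForcing}.

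It remains to verify the approximation property in hypothesis (3) of Theorem \ref{MainTheorem}. Writing $G':=\pi[G\cap M]$, the pair $(V[G'],V[G])$ is an extension by the quotient $\dM_0(G',\tau,\mu,\kappa\smallsetminus(M\cap\kappa))\times\Add(\tau,\lambda\smallsetminus\pi(\lambda))$, using Lemma \ref{M0Embedding} on the first coordinate and the obvious product splitting on the second, and the preceding Lemma states precisely that this quotient has the ${<}\,\tau^+$-approximation property over $V[G']$. Since $\tau^+\subseteq M$ forces $\pi(\tau^+)=\tau^+$, this is exactly the ${<}\,\pi(\delta)$-approximation property demanded by Theorem \ref{MainTheorem} for $\delta=\tau^+$; the remaining hypotheses ($\kappa$ is $\lambda_0$-ineffable, and $\dM_2(\tau,\mu,\kappa,\lambda)$ has size $\leq\lambda_0$ and is $\kappa$-cc.) have already been checked, so $\dM_2(\tau,\mu,\kappa,\lambda)$ forces $\ISP(\tau^+,\kappa,\lambda_0)$. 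Since the preceding lemmas carry all the genuine content, the only point requiring real care is this final alignment: keeping the forcing parameter $\lambda$ separate from the ineffability target $\lambda_0$ while ensuring $\pi$ fixes $\tau^+$, so that the approximation property produced by the quotient matches exactly the one consumed by Theorem \ref{MainTheorem}.
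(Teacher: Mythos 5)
Your argument follows exactly the route the paper intends: its own proof of this theorem is literally ``This follows just as for $\dM_1$,'' and what you have written is a faithful instantiation of that earlier proof for $\dM_2(\tau,\mu,\kappa,\lambda)=\dM_0(\tau,\mu,\kappa)\times\Add(\tau,\lambda)$ (you correctly restore the parameter $\mu$ that the statement suppresses). The product decomposition, the $\kappa$-cc.\ and size computation, the nice-name count giving $2^{\tau}=\lambda$, the choice of a witness via Lemma \ref{IneffEmbed}, the computation of $\pi(\dM_2(\tau,\mu,\kappa,\lambda))=\dM_0(\tau,\mu,M\cap\kappa)\times\Add(\tau,\pi(\lambda))$, the genericity of $\pi[G\cap M]$ from the chain condition, the identification of $(V[\pi[G\cap M]],V[G])$ as an extension by $\dM_0(G',\tau,\mu,\kappa\smallsetminus(M\cap\kappa))\times\Add(\tau,\lambda\smallsetminus\pi(\lambda))$, the appeal to the preceding approximation lemma, and the final application of Theorem \ref{MainTheorem} with $\delta=\tau^+$ (noting $\pi(\tau^+)=\tau^+$) all match the paper and are correct.

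The one genuine problem is the parenthetical claim that ``$\lambda_0$-ineffability of $\kappa$ yields $\lambda_0^{<\kappa}=\lambda_0$.'' You are right that this fact is needed: Lemma \ref{IneffEmbed} requires a bijection $\lambda_0\to[\lambda_0]^{<\kappa}$ to build its auxiliary list, and Theorem \ref{MainTheorem} has $\lambda=\lambda^{<\kappa}$ as an explicit hypothesis on the ineffability target. But nothing in the paper, and no standard result, lets you derive it from $\lambda_0$-ineffability alone. The paper itself plainly does not regard it as automatic: the analogous theorem for $\dM_1$ assumes ``$\lambda_0$ is a regular cardinal with $\lambda_0^{<\kappa}=\lambda_0$'' \emph{in addition to} $\lambda_0$-ineffability, and both Lemma \ref{IneffEmbed} and Theorem \ref{MainTheorem} list $\lambda=\lambda^{<\kappa}$ as a separate hypothesis next to ineffability — all of which would be redundant if your claim were a theorem. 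Implications of this shape are Solovay-type results: they hold for $\lambda_0$-supercompact (and $\lambda_0$-strongly compact) cardinals, but for mere $\lambda_0$-ineffability one would need a genuine argument in the spirit of Krueger's ``guessing models imply SCH,'' and it cannot be asserted in passing. The honest repair is not to derive the equality but to carry the hypothesis $\lambda_0^{<\kappa}=\lambda_0$ (with $\lambda_0$ regular) over from the $\dM_1$ theorem; the statement as printed omits it, but its proof-by-reference visibly requires it. With that hypothesis added, the rest of your proof goes through verbatim.
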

	
	\begin{proof}
		This follows just as for $\dM_1$.
	\end{proof}
	
	Interestingly, we needed a certain degree of ineffability to obtain a large powerset of $\mu$ (or $\tau$ respectively). This begs the following question:
	
	\begin{myque}
		Assume $\ISP(\delta,\kappa,\kappa)$ holds and $\kappa^{<\delta}=\lambda$. Does this imply $\ISP(\delta,\kappa,\lambda)$? More generally, does $\ISP(\delta,\kappa,\lambda)$ imply $\ISP(\delta,\kappa,\lambda^{<\delta})$?
	\end{myque}
	
	\section{Indestructibility of $\ISP$}
	
	It is a well-known result by Laver that if $\kappa$ is a supercompact cardinal, there is a forcing which leaves $\kappa$ supercompact and moreover makes the supercompactness of $\kappa$ indestructible under ${<}\,\kappa$-directed closed forcing (see \cite{LaverIndestruct}). It is also known that if the \emph{proper forcing axiom} $\PFA$ holds, the tree property at $\aleph_2$ is indestructible by ${<}\,\aleph_2$-closed forcing (see \cite{KoenigYoshinobuFragmentsMM}). In \cite{UngerFragIndestructI}, Unger devised a guessing variant of Mitchell forcing to present a more adaptable method for obtaining the indestructibility of the tree property under directed-closed forcings (in particular, this method can be carried out at cardinals above $\aleph_2$). We will adapt his arguments to show that it is consistent that $\ISP(\tau^+,\kappa,\geq\kappa)$ holds (for $\kappa$ a successor of a regular cardinal), where $\ISP(\tau^+,\kappa,\geq\kappa)$ means that $\ISP(\tau^+,\kappa,\lambda)$ holds for every $\lambda\geq\kappa$, and is indestructible under ${<}\,\kappa$-directed closed forcing.
	
	Our forcing will be a guessing variant of Mitchell forcing, modified to collapse cardinals in a ``non-fresh`` way to ensure the approximation property.
	
	We will need the concept of a Laver function:
	
	\begin{mydef}
		Let $\kappa$ be a cardinal and $l\colon \kappa\to V_{\kappa}$. $l$ is a \emph{Laver function} if for any $A$ and any $\lambda\geq|\tcl(A)|$ there is an embedding $j\colon V\to M$ with critical point $\kappa$ such that $j(\kappa)>\lambda$, $j(f)(\kappa)=A$ and $M^{\lambda}\subseteq M$.
	\end{mydef}
	
	Similar functions can be defined for many large cardinal notions witnessed by embeddings. In the author's PhD thesis, such functions are defined and constructed (by forcing) for $\lambda$-ineffable cardinals. However, in the case of supercompactness, a Laver function always exists:
	
	\begin{mysen}[\cite{LaverIndestruct}]
		Let $\kappa$ be a supercompact cardinal. Then there exists a Laver function $l\colon \kappa\to V_{\kappa}$.
	\end{mysen}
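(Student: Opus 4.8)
The plan is to build $l$ by a \emph{minimal-counterexample} recursion and then to refute the existence of a counterexample to its being a Laver function by reflecting the recursion through a supercompactness embedding. First I would reformulate the Laver property in terms of ultrapowers. For a normal fine ultrafilter $U$ on $[\lambda]^{<\alpha}$ let $j_U\colon V\to M_U$ be the associated ultrapower embedding; it has critical point $\alpha$, satisfies $j_U(\alpha)>\lambda$ and $M_U^{\lambda}\subseteq M_U$, and (by the Solovay--Magidor characterization) every $\lambda$-supercompactness embedding may be taken of this form. Call a pair $(A,\lambda)$ with $\lambda\geq|\tcl(A)|$ \emph{good} for a function $h$ at $\alpha$ if there is such a $U$ with $j_U(h)(\alpha)=A$, and \emph{bad} otherwise. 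Since $U$ ranges over sets and $j_U,M_U$ are uniformly definable from $U$, goodness is a first-order property of $(h,A,\lambda,\alpha)$, which is exactly what makes the recursion below legitimate and preserved by elementary embeddings.

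Next I would fix a well-ordering $<^*$ of $V_\kappa$ and define $l\colon\kappa\to V_\kappa$ by recursion: given $l\uhr\alpha$, if some pair $(A,\lambda)$ with $A\in V_\kappa$ is bad for $l\uhr\alpha$ at $\alpha$, let $\lambda_\alpha$ be least such that a bad pair with that second coordinate exists and let $l(\alpha)$ be the $<^*$-least $A$ for which $(A,\lambda_\alpha)$ is bad; otherwise set $l(\alpha)=\emptyset$. Encoding both the least $\lambda$ and the $<^*$-least witness makes the choice canonical, hence computed identically in any model that correctly evaluates the relevant instances of goodness.

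The substance is the refutation. Suppose $l$ is not a Laver function, and let $(A,\lambda)$ be a counterexample with $\lambda$ least and, for that $\lambda$, $A$ chosen $<^*$-least. Pick $\lambda^*$ large enough (e.g.\ $2^{\lambda^{<\kappa}}\leq\lambda^*$) that every normal fine ultrafilter on $[\lambda']^{<\kappa}$ with $\lambda'\leq\lambda$ belongs to any model closed under $\lambda^*$-sequences, and let $j\colon V\to M$ be a supercompactness embedding with $\crit(j)=\kappa$, $j(\kappa)>\lambda^*$ and $M^{\lambda^*}\subseteq M$. Because $\crit(j)=\kappa$ we have $j\uhr V_\kappa=\mathrm{id}$ and $j(<^*)\cap(V_\kappa)^2=\,<^*$, so the recursion defining $j(l)$ agrees with that defining $l$ below $\kappa$, giving $j(l)\uhr\kappa=l$; thus in $M$ the value $j(l)(\kappa)$ is computed by the same recursion applied to $l$ at the ordinal $\kappa$ (with codomain bound $V_{j(\kappa)}$, which accommodates $A$ since $|\tcl(A)|\leq\lambda<j(\kappa)$).

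It remains to show this computation returns exactly $A$, which is the reflection step and the genuine obstacle. Here $M^{\lambda^*}\subseteq M$ is used in both directions. If a pair $(A',\lambda')$ with $\lambda'\leq\lambda$ is good in $V$, a witnessing ultrafilter $U$ lies in $M$, and since $U$ together with all functions needed to evaluate $j_U(l)(\kappa)$ lies in $M$, the assertion $j_U(l)(\kappa)=A'$ is absolute between $M$ and $V$, so $M$ also sees $(A',\lambda')$ as good; conversely any ultrafilter witnessing goodness inside $M$ is a genuine ultrafilter of $V$, so $M$-goodness implies $V$-goodness. Hence $M$ and $V$ agree on goodness for every pair with second coordinate $\leq\lambda$, whence $M$ computes the same least bad value $\lambda$ and the same $<^*$-least witness, i.e.\ $j(l)(\kappa)=A$; but then $j$ itself witnesses that $(A,\lambda)$ is good, contradicting the choice of counterexample. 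I expect the only delicate part to be this reflection together with the attendant bookkeeping — in particular verifying that using $V_\kappa$ as the codomain bound produces no mismatch at stages below $\kappa$ (any bad pair available under the image bound $V_{j(\kappa)}$ but not under $V_\kappa$ has second coordinate $\geq\kappa$) and choosing $\lambda^*$ so that all relevant ultrafilters are captured in $M$; the remainder is a routine recursion.
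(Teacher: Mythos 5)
The paper itself gives no proof of this theorem (it is quoted from Laver's article), so the comparison is against the classical argument — and your proposal reconstructs exactly that argument: a minimal-counterexample recursion, made first-order by reformulating supercompactness in terms of normal fine ultrafilters on $[\lambda]^{<\alpha}$, followed by reflection through an embedding with closure $\lambda^*\geq 2^{\lambda^{<\kappa}}$. The recursion, the choice of $\lambda^*$, and the two-directional absoluteness of goodness between $V$ and $M$ (ultrafilters witnessing goodness in $V$ lie in $M$ and their ultrapower computations are absolute; $M$-ultrafilters are genuine ultrafilters because $P([\lambda']^{<\kappa})\subseteq M$) are all correct and constitute the heart of Laver's proof.

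One step fails as written, however: the identification $j(l)(\kappa)=A$. A counterexample witness $A$ is only required to satisfy $|\tcl(A)|\leq\lambda$, and $\lambda$ may exceed $\kappa$, so $A$ need not lie in $V_\kappa$; hence ``choose $A$ $<^*$-least'' is not meaningful ($<^*$ well-orders only $V_\kappa$, and ZFC supplies no definable global well-order to extend it). Correspondingly, the recursion in $M$ at stage $\kappa$ selects the $j(<^*)$-least bad witness from the pool $V_{j(\kappa)}^M$, which properly contains the $V_\kappa$-pool; over $H(\lambda^+)\smallsetminus V_\kappa$ you have no control of $j(<^*)$, and the set of bad witnesses for $\lambda$ may even be disjoint from $V_\kappa$, in which case your $V$-side choice is vacuous while $M$ picks a set of rank at least $\kappa$. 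The repair is immediate and uses only what you already proved: drop the canonical choice of $A$, keep only the minimality of $\lambda$, and put $A^*:=j(l)(\kappa)$. By the definition of the recursion in $M$ and your agreement lemma (which shows $M$'s least bad value is exactly $\lambda$), the pair $(A^*,\lambda)$ is bad in $M$, hence bad in $V$ by the contrapositive of the direction ``good in $V$ implies good in $M$''. But the normal fine ultrafilter $U$ on $[\lambda]^{<\kappa}$ derived from $j$ satisfies $j_U(l)(\kappa)=A^*$: the factor embedding $k\colon M_U\to M$ with $k\circ j_U=j$ fixes every set that $M_U$ believes has transitive closure of size at most $\lambda$, and $k(j_U(l)(\kappa))=j(l)(\kappa)=A^*$ — this is precisely the Solovay-style transfer you invoke at the outset. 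So $(A^*,\lambda)$ is good in $V$, a contradiction. With that single change your proof is Laver's.
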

	
	Such a function also has interesting properties regarding the embeddings witnessing $\lambda$-ineffability.
	
	\begin{mylem}\label{LaverIneffable}
		Let $\kappa$ be a supercompact cardinal and let $l\colon\kappa\to V_{\kappa}$ be a Laver function. Then the following holds: For any $\lambda\geq\kappa$, any $(\kappa,\lambda)$-list $f$, any $A\in H(\lambda^+)$ and any large enough $\Theta$ there exists a $\lambda$-ineffability witness $N\prec H(\Theta)$ for $\kappa$ with respect to $f$ such that, letting $\pi$ be the Mostowski-Collapse of $N$, $l(\pi(\kappa))=\pi(A)$.
	\end{mylem}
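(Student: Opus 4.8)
The plan is to run a reflection argument driven by the Laver function. Fix $\lambda$, $f$, $A$ and a large $\Theta$ (large enough that $f,A\in H(\Theta)$ and $\Theta>\lambda$). Since $A\in H(\lambda^+)$ we have $|\tcl(A)|\leq\lambda\leq 2^{<\Theta}$, so I may apply the Laver function to $A$ with closure parameter $2^{<\Theta}$, obtaining an embedding $j\colon V\to M$ with $\crit(j)=\kappa$, $j(\kappa)>2^{<\Theta}$, $j(l)(\kappa)=A$ and $M^{2^{<\Theta}}\subseteq M$. The closure is chosen exactly so that, as $|H(\Theta)|=2^{<\Theta}$, both $H(\Theta)\in M$ and $j\uhr H(\Theta)\in M$; hence $N_0:=j[H(\Theta)]\in M$. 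I will show that, inside $M$, the set $N_0$ is a $j(\lambda)$-ineffability witness for $j(\kappa)$ with respect to $j(f)$ which additionally satisfies the \emph{Laver clause} $j(l)(N_0\cap j(\kappa))=\pi_{N_0}(j(A))$, and then pull this back to $V$ by elementarity.

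Most properties of $N_0$ come for free. As $j\uhr H(\Theta)$ is a fully elementary map from $H(\Theta)^V$ into $j(H(\Theta))=H(j(\Theta))^M$, its range satisfies $N_0\prec H(j(\Theta))^M$, and $|N_0|=2^{<\Theta}<j(\kappa)$ in $M$. Because $\crit(j)=\kappa$ we have $j\uhr\kappa=\mathrm{id}$, so $N_0\cap j(\kappa)=j[\kappa]=\kappa\in j(\kappa)$, giving the ordinal-intersection requirement. Moreover $j^{-1}\uhr N_0$ is an isomorphism of $(N_0,\in)$ onto the transitive set $H(\Theta)^V$, so (by uniqueness of the Mostowski collapse, all computed in $M$) the collapse of $N_0$ is $\pi_{N_0}=j^{-1}\uhr N_0$. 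In particular $\pi_{N_0}(j(A))=A$, and since $j(l)(N_0\cap j(\kappa))=j(l)(\kappa)=A$ by the choice of $j$, the Laver clause holds automatically. Finally $\{j(f),j(\kappa),j(\lambda),j(A)\}\subseteq N_0$ because $\{f,\kappa,\lambda,A\}\subseteq H(\Theta)$.

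For the branch, note $N_0\cap j(\lambda)=j[\lambda]$, which lies in $M$ by closure, so $j(f)(j[\lambda])$ is a defined subset of $j[\lambda]$. Let $c:=\{\alpha<\lambda\mid j(\alpha)\in j(f)(j[\lambda])\}$; this is a subset of $\lambda$ definable from $j\uhr\lambda\in M$, so it lies in $M\subseteq V$ and hence in $H(\Theta)$, whence $b:=j(c)\in N_0$. Since $j(c)\subseteq j(\lambda)$ one computes $j(c)\cap N_0=j(c)\cap j[\lambda]=\{j(\alpha)\mid\alpha\in c\}=j(f)(j[\lambda])$, using $j(f)(j[\lambda])\subseteq j[\lambda]$. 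Thus $b$ witnesses $b\cap N_0=j(f)(N_0\cap j(\lambda))$ in $M$, completing the verification that $M$ satisfies the full statement with witness $N_0$.

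Finally, the assertion ``there is $N\in[H(\Theta)]^{<\kappa}$ with $N\prec H(\Theta)$, $N\cap\kappa\in\kappa$, $\{f,\kappa,\lambda,A\}\subseteq N$, $\exists b\in N\,(b\cap N=f(N\cap\lambda))$, and $l(N\cap\kappa)=\pi_N(A)$'' is first-order in the parameters $\kappa,\lambda,f,A,l,\Theta$, and its $j$-image is exactly what was just verified in $M$. By elementarity of $j$ it therefore holds in $V$, and any such $N$ is a $\lambda$-ineffability witness for $\kappa$ with respect to $f$ satisfying $l(N\cap\kappa)=\pi_N(A)$. The one point requiring care — and the main technical obstacle — is the bookkeeping that secures $N_0\in M$ (which is why the closure parameter in the Laver function is taken to be $2^{<\Theta}$) together with the identification of $\pi_{N_0}$ with $j^{-1}\uhr N_0$; this identification is precisely what converts the guessing equation $j(l)(\kappa)=A$ into the reflected Laver clause, while the existence of the branch is the familiar fact that a supercompactness embedding yields an ineffable branch for every list.
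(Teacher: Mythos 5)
Your proposal is correct and follows essentially the same route as the paper: apply the Laver function to $A$ with closure parameter $|H(\Theta)|=2^{<\Theta}$ to get $j\colon V\to M$ with $j(l)(\kappa)=A$, take $N_0:=j[H(\Theta)]\in M$, verify in $M$ that it is a $j(\lambda)$-ineffability witness for $j(\kappa)$ with respect to $j(f)$ (with branch $j(c)$ for $c=j^{-1}[j(f)(j[\lambda])]$, exactly the paper's $b$) whose collapse is $j^{-1}\uhr N_0$, and pull back by elementarity. The only cosmetic difference is that you identify the collapse via uniqueness of the Mostowski collapse, where the paper proves $\pi(j(x))=x$ by $\in$-induction; these are the same fact.
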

	
	\begin{proof}
		Let $f$ be a $(\kappa,\lambda)$-list. Let $A\in H(\lambda^+)$, $\Theta$ be large and $x\in [H(\Theta)]^{<\kappa}$. Let $\delta:=|H(\Theta)|$ and $j\colon V\to M$ an elementary embedding such that $j(\kappa)>\delta$, ${}^{\delta}M\subseteq M$ and $j(l)(\kappa)=A$. By the closure properties $j[\lambda]\in M$, so $b:=j^{-1}[j(f)(j[\lambda])]$ is defined. The set $C$ of all $N\prec H(\Theta)$ of size ${<}\,\kappa$ containing $b,f,\kappa,\lambda,A$ is a club. Hence $N:=j[H(\Theta)]$ is in $j(C)$. We have
		$$j(b)\cap (N\cap j(\lambda))=j(b)\cap j[\lambda]=j(f)(j[\lambda])$$
		and $N$ is a $j(\lambda)$-ineffability witness for $j(\kappa)$ with respect to $j(f)$.
		
		\begin{myclaim}
			If $x\in H(\Theta)$, $\pi(j(x))=x$.
		\end{myclaim}
		
		\begin{proof}
			Note first that the statement makes sense because if $x\in H(\Theta)$, $j(x)\in N$.
			
			We prove it by $\in$-induction. Assume $y\in H(\Theta)$ and $\pi(j(x))=x$ for every $x\in y$. By the definition
			$$\pi(j(y))=\{\pi(a)\;|\;a\in j(y)\cap N\}$$
			Assume $a\in j(y)\cap N$. Because $j(y)\in j(H(\Theta))$, $a\in N\cap j(H(\Theta))=j[H(\Theta)]$, so there is $x\in H(\Theta)$ with $j(x)=a$. By assumption $x\in y$ so $\pi(j(x))=x\in\pi(j(y))$.
			
			Assume $x\in y$. Then $j(x)\in j(y)$ and $j(x)\in N$. Thus $\pi(j(x))=x\in\pi(j(y))$.
		\end{proof}
		
		Thus $j(l)(\pi(j(\kappa)))=j(l)(\kappa)=A=\pi(j(A))$. In summary, in $M$, there exists a $j(\lambda)$-ineffability witness (namely $N$) for $j(\kappa)$ with respect to $j(f)$ such that $j(l)(\pi(j(\kappa)))=\pi(j(A))$.
		
		By elementarity there is in $V$ a $\lambda$-ineffability witness for $\kappa$ with respect to $f$ such that $l(\pi(\kappa))=\pi(A)$.
	\end{proof}
	
	The previous lemma allows us to use small embeddings to prove our last result.
	
	For technical reasons, let $\Add(\tau,\beta)$ consist of functions $f$ on $\beta$ of size ${<}\,\tau$ such that for all $\gamma\in\dom(f)$, $\gamma$ is a successor ordinal and $f(\gamma)\in\Add(\tau)$.
	
	\begin{mydef}
		Let $\tau<\mu<\kappa$ be regular cardinals such that $\tau^{<\tau}=\tau$ and $\kappa$ is inaccessible. Let $l\colon\kappa\to V_{\kappa}$ be any function. We define $\dM_3^l(\tau,\mu,\beta)$ by induction on $\beta$, letting $\dM_3^l(\tau,\mu,0):=\{0\}$. Assume $\dM_3^l(\tau,\mu,\gamma)$ has been defined for all $\gamma<\beta$. Let $\dM_3^l(\tau,\mu,\beta)$ consist of triples $(p,q,r)$ such that
		\begin{enumerate}
			\item $p\in\Add(\tau,\beta)$
			\item $q$ is a partial function on $\beta$ of size ${<}\,\mu$ such that if $\gamma\in\dom(q)$, $\gamma=\delta+2$ for an inaccessible cardinal $\delta$ and $q(\gamma)$ is an $\Add(\tau,\gamma)$-name for a condition in $\dot{\Coll}(\check{\mu},\check{\delta})$
			\item $r$ is a partial function on $\beta$ of size ${<}\,\mu$ such that if $\gamma\in\dom(r)$, $\gamma$ is inaccessible, $l(\gamma)$ is an $\dM_3^l(\tau,\mu,\gamma)$-name for a ${<}\,\gamma$-directed closed partial order and $r(\gamma)$ is an $\dM_3^l(\tau,\mu,\gamma)$-name for an element of $l(\gamma)$.
		\end{enumerate}
		We let $(p',q',r')\leq(p,q,r)$ if
		\begin{enumerate}
			\item $p'\leq p$
			\item $\dom(q')\supseteq\dom(q)$ and for $\gamma\in\dom(q)$,
			$$p'\uhr\gamma\Vdash q'(\gamma)\leq q(\gamma)$$
			\item $\dom(r')\supseteq\dom(r)$ and for $\gamma\in\dom(r)$,
			$$(p'\uhr\gamma,q'\uhr\gamma,r'\uhr\gamma)\Vdash r'(\gamma)\leq r(\gamma)$$
		\end{enumerate}
	\end{mydef}
	
	We can view $\dM_3^l(\tau,\mu,\nu)$ as an order on a product in two distinct ways: Letting $\dP:=\Add(\tau,\nu)$, $\dQ$ the set of all possible $q$ and $\dR$ the set of all possible $r$, $\dM_3^l(\tau,\mu,\nu)$ is a based ordering on $\dP\times(\dQ\times\dR)$ and $(\dP\times\dQ)\times\dR$. We will choose the first option. We also note that the base ordering induced on $(\dP\times\dQ)$ by $(\dP\times\dQ)\times\dR$ is isomorphic to $\dM_0$.
	
	\begin{mylem}
		Let $\nu\in(\mu,\kappa]$ be inaccessible such that $l\uhr\nu\colon \nu\to V_{\nu}$.
		\begin{enumerate}
			\item $\dM_3^l(\tau,\mu,\nu)$ is ${<}\,\tau$-directed closed,
			\item $\dM_3^l(\tau,\mu,\nu)$ is $\nu$-Knaster,
			\item The base ordering on $\dM_3^l(\tau,\mu,\nu)$ is $\tau^+$-Knaster,
			\item The term ordering on $\dM_3^l(\tau,\mu,\nu)$ is ${<}\,\mu$-closed,
			\item The ordering is iteration-like.
		\end{enumerate}
	\end{mylem}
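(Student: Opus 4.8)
The plan is to adapt the proof of Lemma \ref{M0Prop} for $\dM_0$, treating $\dM_3^l(\tau,\mu,\nu)$ throughout as a based ordering on the product $\dP\times(\dQ\times\dR)$ with $\dP=\Add(\tau,\nu)$, and handling the extra guessing coordinate $r$ by recursion on $\gamma$. The one genuinely new ingredient is that at a coordinate $\gamma\in\dom(r)$ the relevant poset is $l(\gamma)$, which is forced to be ${<}\,\gamma$-directed closed; since $\gamma$ is inaccessible and $\gamma>\mu>\tau$, this delivers both the ${<}\,\tau$-directed closure needed for (1) and the ${<}\,\mu$-closure needed for (4). With this observation, (1) and (2) go through exactly as the corresponding (standard) statements for $\dM_0$: for (1) one unions the three coordinates of a ${<}\,\tau$-directed family, using the ${<}\,\tau$-directed closure of $\Add(\tau)$, of $\dot{\Coll}(\check{\mu},\check{\delta})$, and of $l(\gamma)$ respectively, and defines the lower-bound names $q^*(\gamma),r^*(\gamma)$ by simultaneous recursion on $\gamma$; for (2) one runs the usual $\Delta$-system argument on the ${<}\,\mu$-sized supports $\dom(p)\cup\dom(q)\cup\dom(r)$ and then thins to a subfamily of size $\nu$ agreeing on the root, which is possible because the inaccessibility of $\nu$ bounds below $\nu$ the number of possible restrictions of a condition (including its $\Add(\tau,\gamma)$- and $\dM_3^l(\tau,\mu,\gamma)$-names) to a fixed ${<}\,\mu$-sized set.

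For (3), I would simply compute the base ordering: $(p',q',r')\,(b(R))\,(p,q,r)$ holds iff $p'\leq p$ in $\Add(\tau,\nu)$ (taking $q=r=\emptyset$ for the converse), so $b(R)$ is isomorphic to $\Add(\tau,\nu)$, which is $(2^{<\tau})^+=\tau^+$-Knaster since $\tau^{<\tau}=\tau$. For (4), fix $p$ and a ${<}\,\mu$-descending sequence $(q_\alpha,r_\alpha)_{\alpha<\delta}$ in the section ordering $s(R,p)$, and build a lower bound $(q,r)$ by recursion on the coordinates $\gamma<\nu$ simultaneously: having fixed $q\uhr\gamma,r\uhr\gamma$, the condition $(p\uhr\gamma,q\uhr\gamma,r\uhr\gamma)$ of $\dM_3^l(\tau,\mu,\gamma)$ extends every $(p\uhr\gamma,q_\alpha\uhr\gamma,r_\alpha\uhr\gamma)$ and therefore forces $(q_\alpha(\gamma))_\alpha$ to be descending in $\dot{\Coll}(\check{\mu},\check{\delta})$ and $(r_\alpha(\gamma))_\alpha$ to be descending in $l(\gamma)$; by the ${<}\,\mu$-closure of these posets and the maximal principle I obtain names $q(\gamma),r(\gamma)$ forced to be lower bounds.

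It remains to verify (5), that $R$ is iteration-like. Basedness is immediate from the definition (if $p'\leq p$ then $(p',q,r)\leq(p,q,r)$ for every $(q,r)$), and the refinement property holds because stronger conditions force more: passing from $p$ to some $p'\leq p$ only strengthens the forcing hypotheses $(p\uhr\gamma,\dots)\Vdash\dots$ defining the section order. For the projection and mixing properties I would reflect names coordinatewise exactly as in the iteration example and in Lemma \ref{M0Prop}: for projection, given $(p',q',r')\leq(p,q,r)$, define $q''(\gamma),r''(\gamma)$ to agree with $q'(\gamma),r'(\gamma)$ below the relevant restriction of $(p',q',r')$ and with $q(\gamma),r(\gamma)$ elsewhere; for mixing, given $(p,q_0,r_0),(p,q_1,r_1)\leq(p,q,r)$, first extend $p$ to incompatible $p_0,p_1$ by splitting at the successor coordinate $1$, which lies below every element of the supports in question, so that $p_0\uhr\gamma\perp p_1\uhr\gamma$ for all relevant $\gamma$, and then mix the $q$- and $r$-values according to which of $p_0\uhr\gamma,p_1\uhr\gamma$ lies in the generic.

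The main obstacle, and the only place the argument differs essentially from the $\dM_0$ case, is carrying out these name-reflections for the $r$-coordinate, where the reflecting condition lives in the full lower forcing $\dM_3^l(\tau,\mu,\gamma)$ rather than in $\Add(\tau,\gamma)$ alone. I would handle this by the same simultaneous recursion on $\gamma$ used for (1) and (4): at stage $\gamma$ the names $q''\uhr\gamma,r''\uhr\gamma$ (resp.\ $q'\uhr\gamma,r'\uhr\gamma$) are already defined, so $(p'\uhr\gamma,q'\uhr\gamma,r'\uhr\gamma)$ is a legitimate condition of $\dM_3^l(\tau,\mu,\gamma)$ below which $r''(\gamma)$ can be forced equal to $r'(\gamma)$, and one checks via the recursion hypothesis that these partial conditions cohere into the required global names and satisfy the defining inequalities of $\leq$.
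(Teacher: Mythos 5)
Your proposal is correct and follows essentially the same route as the paper: (1)--(3) treated as standard (base ordering $\cong\Add(\tau,\nu)$, $\Delta$-system with nice-name counting under inaccessibility of $\nu$), (4) and the projection/mixing properties proved by recursion on the coordinates, mixing names for the $r$-entries against the already-constructed partial conditions, and splitting $p$ at the successor coordinate $1$ so that $p_0\uhr\gamma\perp p_1\uhr\gamma$ at every relevant $\gamma$. One small wrinkle in your (4): the lower-bound name $q(\gamma)$ must be an $\Add(\tau,\gamma)$-name forced by $p\uhr\gamma$ alone to be a lower bound, not a name extracted by the maximal principle below the full condition $(p\uhr\gamma,q\uhr\gamma,r\uhr\gamma)$ of $\dM_3^l(\tau,\mu,\gamma)$; this is harmless, since the definition of the order already gives $p\uhr\gamma\Vdash q_{\alpha'}(\gamma)\leq q_{\alpha}(\gamma)$, and it is exactly why the paper first builds $q$ as for $\dM_0$ and runs the recursion only for $r$.
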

	
	\begin{proof}
		The proofs for (1), (2) and (3) are standard.
		
		Regarding (4), let $(p,q_{\alpha},r_{\alpha})_{\alpha<\delta}$ be a descending sequence (with $\delta<\mu$). We first define $q$ as in Lemma \ref{M0Prop}, so that $(p,q)$ is a lower bound of $(p,q_{\alpha})_{\alpha<\delta}$. Now let $y:=\bigcup_{\alpha<\delta}\dom(r_{\alpha})$, which has size ${<}\,\mu$. We define a function $r$ on $y$ by induction on $\beta<\nu$ such that $(p\uhr\beta,q\uhr\beta,r\uhr\beta)$ is a lower bound of $(p\uhr\beta,q\uhr\beta,r_{\alpha}\uhr\beta)_{\alpha<\delta}$. Assume $r\uhr\beta$ has been defined and $\beta\in y$. Let $\alpha_0$ be such that $\beta\in\dom(r_{\alpha_0})$. For $\alpha_0\leq\alpha<\alpha'<\delta$, $(p,q_{\alpha'},r_{\alpha'})\leq(p,q_{\alpha},r_{\alpha})$ and thus
		$$(p\uhr\beta,q_{\alpha'}\uhr\beta,r_{\alpha'}\uhr\beta)\Vdash r_{\alpha'}(\beta)\leq r_{\alpha}(\beta)$$
		By the inductive hypothesis,
		$$(p\uhr\beta,q\uhr\beta,r\uhr\beta)\leq(p\uhr\beta,q\uhr\beta,r_{\alpha'}\uhr\beta)\leq(p\uhr\beta,q_{\alpha'}\uhr\beta,r_{\alpha'}\uhr\beta)$$
		so $(p\uhr\beta,q\uhr\beta,r\uhr\beta)\Vdash r_{\alpha'}(\beta)\leq r_{\alpha}(\beta)$. Thus $(p\uhr\beta,q\uhr\beta,r\uhr\beta)$ forces $(r_{\alpha}(\beta))_{\alpha_0\leq\alpha<\delta}$ to be a descending sequence in some ${<}\,\beta$-directed closed partial order (where $\beta\geq\mu$) and we can fix a lower bound $r(\beta)$. Then $(p\uhr\beta+1,q\uhr\beta+1,r\uhr\beta+1)$ is a lower bound of $(p\uhr\beta+1,q_{\alpha}\uhr\beta+1,r_{\alpha}\uhr\beta+1)_{\alpha<\delta}$.
		
		Now we show (5). Regarding the projection property, let $(p',q',r')\leq(p,q,r)$. Find $q''$ as in Lemma \ref{M0Prop} such that $(p,q'')\leq(p,q)$ and $(p',q'')\leq(p',q')\leq(p',q'')$. We define a function $r''$ with domain $\dom(r')$ by induction such that for every $\beta$,
		$$(p\uhr\beta,q''\uhr\beta,r''\uhr\beta)\leq(p\uhr\beta,q\uhr\beta,r\uhr\beta)$$
		and
		$$(p'\uhr\beta,q''\uhr\beta,r''\uhr\beta)\leq(p'\uhr\beta,q'\uhr\beta,r'\uhr\beta)\leq(p'\uhr\beta,q''\uhr\beta,r''\uhr\beta)$$
		Assume $r''\uhr\beta$ has been defined and $\beta\in\dom(r')$. Let $r''(\beta)$ be a name such that $(p'\uhr\beta,q''\uhr\beta,r''\uhr\beta)$ forces $r''(\beta)=r'(\beta)$ and conditions incompatible with $(p'\uhr\beta,q''\uhr\beta,r''\uhr\beta)$ force $r''(\beta)=r(\beta)$. Then
		$$(p\uhr\beta+1,q''\uhr\beta+1,r''\uhr\beta+1)\leq(p\uhr\beta+1,q\uhr\beta+1,r\uhr\beta+1)$$
		and
		$$(p'\uhr\beta+1,q''\uhr\beta+1,r''\uhr\beta+1)\leq(p'\uhr\beta+1,q'\uhr\beta+1,r'\uhr\beta+1)\leq(p'\uhr\beta+1,q''\uhr\beta+1,r''\uhr\beta+1)$$
		
		The refinement property is clear.
		
		Regarding the mixing property, let $(p,q_0,r_0),(p,q_1,r_1)\leq(p,q,r)$. Choose $p_0,p_1\leq p$ such that $p_0(1)$ and $p_1(1)$ are incompatible and find $q'$ such that $(p,q')\leq(p,q)$ and $(p_i,q')\leq(p,q_i)$. It follows that for any inaccessible cardinal $\gamma$, $(p_0\uhr\gamma,q'\uhr\gamma)$ and $(p_1\uhr\gamma,q'\uhr\gamma)$ are incompatible. Define a function $r'$ on $\dom(r_0)\cup\dom(r_1)$ by induction on $\beta$ such that $(p\uhr\beta,q'\uhr\beta,r'\uhr\beta)\leq(p,q\uhr\beta,r\uhr\beta)$ and $(p_i\uhr\beta,q'\uhr\beta,r'\uhr\beta)\leq(p\uhr\beta,q_i\uhr\beta,r_i\uhr\beta)$. Assume $\beta\in\dom(r_0)\cup\dom(r_1)$ (if $\beta$ is only in one domain, set $r_i(\beta)=\emptyset$ for the other $i$). By the inductive hypothesis, $(p_i\uhr\beta,q'\uhr\beta,r'\uhr\beta)$ are incompatible for $i\in 2$, so we can choose a name $r'(\beta)$ that is forced by $(p_0\uhr\beta,q'\uhr\beta,r'\uhr\beta)$ to be equal to $r_0(\beta)$ and by conditions incompatible with $(p_0\uhr\beta,q'\uhr\beta,r'\uhr\beta)$ to be equal to $r_1(\beta)$. Then $(p\uhr\beta+1,q'\uhr\beta+1,r'\uhr\beta+1)\leq(p\uhr\beta+1,q\uhr\beta+1,r\uhr\beta+1)$ and $(p_i\uhr\beta+1,q'\uhr\beta+1,r'\uhr\beta+1)\leq(p\uhr\beta+1,q_i\uhr\beta+1,r_i\uhr\beta+1)$.
	\end{proof}
	
	As before, we explicitely construct the quotient ordering:
	
	\begin{mydef}
		Let $\nu<\kappa$ be an inaccessible cardinal such that $l\uhr\nu\colon\nu\to V_{\nu}$. Let $\xi:=\nu+1$ and $G$ be $\dM_3^l(\tau,\mu,\xi)$-generic. In $V[G]$, define the partial order $\dM_3^l(G,\tau,\mu,\kappa\smallsetminus\xi,\beta)$ by induction on $\beta\geq\xi$, letting $\dM_3^l(G,\tau,\mu,\kappa\smallsetminus\xi,\xi):=\{\emptyset\}$ (this defines a name $\dM_3^l(\Gamma,\tau,\mu,\kappa\smallsetminus\xi,\beta)$) Assume that for all $\gamma\in[\xi,\beta)$, $\dM_3^l(\Gamma,\tau,\mu,\kappa\smallsetminus\xi,\gamma)$ has been defined and there is a dense embedding from $\dM_3^l(\tau,\mu,\gamma)$ into $\dM_3^l(\tau,\mu,\xi)*\dM_3^l(\Gamma,\tau,\mu,\kappa\smallsetminus\xi,\gamma)$. Then we let $\dM_3^l(G,\tau,\mu,\kappa\smallsetminus\xi,\beta)$ consist of tripels $(p,q,r)$ such that
		\begin{enumerate}
			\item $p\in\Add(\tau,\beta\smallsetminus\xi)$
			\item $q$ is a ${<}\,\mu$-sized partial function on $\beta\smallsetminus\xi$ such that for all $\gamma\in\dom(q)$, $\gamma$ is the double successor of an inaccessible cardinal $\delta$ and $q(\gamma)$ is an $\Add(\tau,\gamma\smallsetminus\delta)$-name for an element in $\dot{\Coll}(\check{\mu},\check{\delta})$.
			\item $r$ is a ${<}\,\mu$-sized partial function on $\beta\smallsetminus\xi$ such that for all $\gamma\in\dom(r)$, $\gamma$ is inaccessible, $l(\gamma)$ is an $\dM_3^l(\tau,\mu,\gamma)$-name for a $<\gamma$-directed closed partial order and $r(\gamma)$ is an $\dM_3^l(G,\tau,\mu,\kappa\smallsetminus\xi,\gamma)$-name for an element in $l(\gamma)^G$ (reimagining $l(\gamma)$ as an $\dM(\tau,\mu,\xi)*\dM_3(\Gamma,\tau,\mu,\kappa\smallsetminus\xi,\gamma)$-name).
		\end{enumerate}
	\end{mydef}
	
	The following Lemma shows that the above construction proceeds up to $\kappa$ and actually defines a version of the quotient forcing:
	
	\begin{mylem}
		Let $\xi=\nu+1$ for an inaccessible cardinal $\nu$. For any $\beta\in[\xi,\kappa]$, there is a dense embedding from $\dM_3^l(\tau,\mu,\beta)$ into $\dM_3^l(\tau,\mu,\xi)*\dM_3^l(\Gamma,\tau,\mu,\kappa\smallsetminus\xi,\beta)$.
	\end{mylem}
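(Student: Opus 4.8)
The plan is to prove this by induction on $\beta\in[\xi,\kappa]$, mirroring the proof of Lemma~\ref{M0Embedding} for $\dM_0$ but carrying the extra $r$-coordinate along through the inductive hypothesis. The induction is exactly what legitimizes the recursive definition of the quotient: the definition of $\dM_3^l(\Gamma,\tau,\mu,\kappa\smallsetminus\xi,\beta)$ already presupposes, for each inaccessible $\gamma<\beta$, a dense embedding of $\dM_3^l(\tau,\mu,\gamma)$ into $\dM_3^l(\tau,\mu,\xi)*\dM_3^l(\Gamma,\tau,\mu,\kappa\smallsetminus\xi,\gamma)$, since that is what allows the full-forcing name $l(\gamma)$ to be reimagined as a name over the two-step iteration. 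So at stage $\beta$ I may assume the embeddings for all $\gamma<\beta$, and I must produce the one at $\beta$. The base case $\beta=\xi$ is immediate, as the quotient is $\{\emptyset\}$ and $\dM_3^l(\tau,\mu,\xi)*\{\emptyset\}$ is just $\dM_3^l(\tau,\mu,\xi)$.

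Next I would define the embedding by splitting a condition at $\xi$, in analogy with $\dM_0$:
$$\iota((p,q,r)):=((p\uhr\xi,q\uhr\xi,r\uhr\xi),(\check{p}\uhr[\xi,\beta),\overline{q},\overline{r})),$$
where $\overline{q}$ and $\overline{r}$ are $\dM_3^l(\tau,\mu,\xi)$-names for the functions $q\uhr[\xi,\beta)$ and $r\uhr[\xi,\beta)$ recomputed over the first coordinate. The recomputation of $\overline{q}$ is verbatim the one from Lemma~\ref{M0Embedding}, turning each $\Add(\tau,\gamma)$-name $q(\gamma)$ into an $\Add(\tau,\xi)$-name for an $\Add(\tau,\gamma\smallsetminus\xi)$-name. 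For $\overline{r}$, each $\dM_3^l(\tau,\mu,\gamma)$-name $r(\gamma)$ is transported along the stage-$\gamma$ dense embedding granted by the induction hypothesis, yielding an $\dM_3^l(\tau,\mu,\xi)$-name for an $\dM_3^l(\Gamma,\tau,\mu,\kappa\smallsetminus\xi,\gamma)$-name for an element of $l(\gamma)$, which is precisely the datum the quotient requires. That $\iota$ preserves order and incompatibility is then checked clause by clause, using that the three parts of the ordering (on $p$, on $q$ modulo $p\uhr\gamma$, on $r$ modulo $(p,q,r)\uhr\gamma$) match up under this reinterpretation, exactly as the base/term bookkeeping worked for $\dM_0$.

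The substantive point is density of the image. Given a condition $((p_0,q_0,r_0),\sigma)$ of the iteration, I would first strengthen $(p_0,q_0,r_0)$, using the ${<}\,\tau$-directed closure of $\dM_3^l(\tau,\mu,\xi)$, to force $\sigma=(\check{p_1},\sigma_q,\sigma_r)$ for a fixed $p_1\in\Add(\tau,\beta\smallsetminus\xi)$. Since $\sigma_q$ and $\sigma_r$ are forced to be ${<}\,\mu$-sized and the term ordering of $\dM_3^l(\tau,\mu,\xi)$ is ${<}\,\mu$-closed, I may (after further strengthening, via the projection of Lemma~\ref{PPImpliesProjection}) take their domains to be captured by the base ordering $\Add(\tau,\xi)$, and then use its $\tau^+$-Knaster, hence $\mu$-cc., property to find a ground-model set $x$ of size ${<}\,\mu$ bounding $\dom(\sigma_q)\cup\dom(\sigma_r)$. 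For each $\gamma\in x$ I read off a name $q_1(\gamma)$ recomputing $\sigma_q(\gamma)$ and, inverting the stage-$\gamma$ embedding, a $\dM_3^l(\tau,\mu,\gamma)$-name $r_1(\gamma)$ for an element of $l(\gamma)$ recomputing $\sigma_r(\gamma)$. Then $(p_0\cup p_1,q_0\cup q_1,r_0\cup r_1)$ lies in $\dM_3^l(\tau,\mu,\beta)$ and maps under $\iota$ below $((p_0,q_0,r_0),\sigma)$.

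I expect the main obstacle to be the bookkeeping for the $r$-coordinate. Unlike in $\dM_0$, the values $r(\gamma)$ are names over the full forcing $\dM_3^l(\tau,\mu,\gamma)$ and must be shuttled back and forth through the inductively given dense embeddings; one must check that these transports are coherent across stages (so that the name produced at $\gamma$ is genuinely an $\dM_3^l(\Gamma,\tau,\mu,\kappa\smallsetminus\xi,\gamma)$-name for an element of $l(\gamma)^G$, and that the pullback of $\sigma_r(\gamma)$ is a legitimate element of $l(\gamma)$), and that they respect clause (3) of the ordering. The ${<}\,\gamma$-directed closure of $l(\gamma)$ together with the domains having size ${<}\,\mu<\gamma$ is what keeps the recomputed objects inside $l(\gamma)$, while the fact that each stage-$\gamma$ map is an order isomorphism onto a dense set is what preserves the comparison in clause (3).
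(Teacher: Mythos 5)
Your proposal is correct and takes essentially the same route as the paper: induction on $\beta$, the identical splitting embedding $\iota((p,q,r))=((p\uhr\xi,q\uhr\xi,r\uhr\xi),(\check{p}\uhr[\xi,\beta),\overline{q},\overline{r}))$ with $\overline{q}$ recomputed exactly as in Lemma \ref{M0Embedding} and $\overline{r}$ transported through the inductively given stage-$\gamma$ dense embeddings, followed by the density argument adapted from the $\dM_0$ case. The paper's own proof simply states the definition of $\iota$ and dismisses order-preservation and density with ``as in Lemma \ref{M0Embedding}''; the details you spell out (directed closure to decide $\sigma$, the chain condition of the base ordering to bound the domains, and pulling the $r$-names back through the stage-$\gamma$ embeddings) are precisely what that reference is meant to cover.
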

	
	\begin{proof}
		We do the proof by induction on $\beta\in[\xi,\kappa]$ with the base case being clear. Assume that such an embedding exists for all $\gamma\in[\xi,\beta)$. So in particular, the $\dM_3^l(\Gamma,\tau,\mu,\kappa\smallsetminus\xi,\beta)$ is defined. We define the following embedding:
		$$\iota((p,q,r))=((p\uhr\xi,q\uhr\xi,r\uhr\xi),\op(\check{p\uhr[\xi,\beta)},\overline{q},\overline{r}))$$
		where $\overline{q}$ and $\overline{r}$ are defined as follows:
		
		$\overline{q}$ is an $\dM_3^l(\tau,\mu,\xi)$-name for a function such that $\Vdash``\dom(\overline{q})=\check{\dom(q)\smallsetminus\xi}''$ and for any $\gamma\in\dom(q)\smallsetminus\xi$, $\overline{q}(\check{\gamma})$ is an $\Add(\tau,\xi)$-name (and thus an $\dM_3^l(\tau,\mu,\xi)$-name) for an $\Add(\tau,\gamma\smallsetminus\xi)$-name corresponding to the $\Add(\tau,\gamma)$-name $q(\xi)$.
		
		$\overline{r}$ is an $\dM_3^l(\tau,\mu,\xi)$-name for a function such that $\Vdash\dom(\overline{r})=\check{\dom(r)\smallsetminus\xi}$ and for $\gamma\in\dom(r)\smallsetminus\xi$, $\overline{r}(\check{\gamma})$ is an $\dM_3^l(\tau,\mu,\xi)$-name for an $\dM_3^l(\tau,\mu,\gamma\smallsetminus\xi)$-name corresponding to the $\dM_3^l(\tau,\mu,\gamma)$-name $r(\gamma)$ (using the inductive hypothesis).
		
		It follows as in Lemma \ref{M0Embedding} that $\iota$ is a dense embedding.
	\end{proof}
	
	We let $\dM_3^l(G,\tau,\mu,\kappa\smallsetminus\xi):=\dM_3^l(G,\tau,\mu,\kappa\smallsetminus\xi,\kappa)$. This ordering has properties very similar to $\dM_3^l(\tau,\mu,\kappa)$:
	
	\begin{mylem}
		Let $\nu\in(\mu,\kappa]$ be inaccessible such that $l\uhr\nu\colon\nu\to V_{\nu}$ and $\xi:=\nu+1$. Let $G$ be $\dM_3^l(\tau,\mu,\xi)$-generic. In $V[G]$, the following holds:
		\begin{enumerate}
			\item $\dM_3^l(G,\tau,\mu,\kappa\smallsetminus\xi)$ is ${<}\,\tau$-directed closed,
			\item $\dM_3^l(G,\tau,\mu,\kappa\smallsetminus\xi)$ is $\kappa$-Knaster,
			\item The base ordering on $\dM_3^l(G,\tau,\mu,\kappa\smallsetminus\xi)$ is $\tau^+$-Knaster,
			\item The term ordering on $\dM_3^l(G,\tau,\mu,\kappa\smallsetminus\xi)$ is ${<}\,\mu$-closed,
			\item The ordering is iteration-like.
		\end{enumerate}
	\end{mylem}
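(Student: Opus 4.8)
The plan is to run the same argument as for the non-quotient order $\dM_3^l(\tau,\mu,\nu)$, importing the modifications that turned Lemma~\ref{M0Prop} into Lemma~\ref{M0QuotientProp}. First I would note that $\xi=\nu+1<\kappa$, so $G$ is generic for a forcing of size ${<}\,\kappa$; since $\kappa$ is inaccessible this is preserved, and the $\Delta$-system and nice-name machinery remain available in $V[G]$. Properties (1)--(4) then go through as before: ${<}\,\tau$-directed closure and the $\kappa$-Knaster property are the standard coordinatewise arguments, the base ordering is literally $\Add(\tau,\kappa\smallsetminus\xi)$, which is $\tau^+$-Knaster because $\tau^{<\tau}=\tau$, and the ${<}\,\mu$-closure of the term ordering is obtained exactly as in the corresponding property for $\dM_3^l(\tau,\mu,\nu)$, first amalgamating the $q$-coordinates as in Lemma~\ref{M0Prop} and then constructing the $r$-coordinates by induction on $\beta$, taking lower bounds in the ${<}\,\gamma$-directed closed orders $l(\gamma)^G$.

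For property (5), basedness, the projection property and the refinement property are established exactly as before (refinement because stronger conditions force more, and projection by the same inductive mixing-of-names construction that produced $q''$ and $r''$ in the non-quotient proof). The one place requiring care, and the main obstacle, is the mixing property. Here the quotient differs from the non-quotient order in that its Cohen part lives on $\beta\smallsetminus\xi$, so I cannot use coordinate $1$ as in the $\dM_3^l(\tau,\mu,\nu)$-lemma. Instead, as in the proof of Lemma~\ref{M0QuotientProp}, I would choose $p_0,p_1\leq p$ that are already incompatible at coordinate $\xi=\nu+1$, which is a successor ordinal, hence a legitimate coordinate of the redefined $\Add(\tau,\cdot)$, and lies in $\beta\smallsetminus\xi$.

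The key observation making this work is that every coordinate appearing in a $q$- or $r$-part of the quotient is strictly above $\nu$: the collapse coordinates have the form $\delta+2$ with $\delta\geq\nu$ inaccessible, and the directed-closure coordinates are inaccessibles $\gamma>\nu$. Consequently every such $\gamma$ satisfies $\gamma>\nu+1=\xi$, so the restrictions $p_0\uhr\gamma$ and $p_1\uhr\gamma$ still disagree at $\xi$ and remain incompatible. Granting this, the amalgamating condition $q'$ and the inductively defined $r'$ are produced exactly as in the non-quotient proof: at each relevant inaccessible $\gamma$ the conditions $(p_i\uhr\gamma,q'\uhr\gamma,r'\uhr\gamma)$ are incompatible, so one may choose a name $r'(\gamma)$ forced by the $p_0$-side to equal $r_0(\gamma)$ and by its complement to equal $r_1(\gamma)$, yielding $(p,q',r')\leq(p,q,r)$ with $(p_i,q',r')\leq(p,q_i,r_i)$ for $i\in 2$. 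I expect no genuinely new difficulty beyond correctly tracking this single low coordinate through the induction.
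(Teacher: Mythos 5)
Your proposal is correct and follows exactly the argument the paper intends: the paper states this lemma without proof, treating it as the routine analogue of the lemmas for $\dM_3^l(\tau,\mu,\nu)$ and for the $\dM_0$-quotient (where the paper's only remark is precisely the adjustment you identify, namely relocating the incompatibility coordinate for the mixing property). Your key observation that all $q$- and $r$-coordinates of the quotient lie strictly above $\xi=\nu+1$, so that incompatibility at the successor coordinate $\xi$ survives every restriction $p_i\uhr\gamma$, is exactly the point that makes the transfer work.
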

	
	So in particular:
	
	\begin{mycol}
		Let $\xi=\nu+1$ for an inaccessible cardinal $\nu$. Let $G$ be $\dM_3^l(\tau,\mu,\xi)$-generic. In $V[G]$, let $\dot{\dL}$ be an $\dM_3^l(G,\tau,\mu,\kappa\smallsetminus\xi)$-name for a ${<}\,\kappa$-directed closed partial order. Then $\dM_3^l(G,\tau,\mu,\kappa\smallsetminus\xi)*\dot{\dL}$ has the ${<}\,\tau^+$-approximation property.
	\end{mycol}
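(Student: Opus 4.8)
The plan is to view $\dM_3^l(G,\tau,\mu,\kappa\smallsetminus\xi)*\dot{\dL}$ as a single iteration-like order on a product whose base ordering is $\Add(\tau,\kappa\smallsetminus\xi)$ and whose term ordering absorbs the name $\dot{\dL}$ alongside the collapse- and directed-closed-components, so that Theorem \ref{ApproxProp} applies with $\delta=\tau^+$. Concretely, writing $\dP:=\Add(\tau,\kappa\smallsetminus\xi)$, I would let the second coordinate $\dQ$ consist of triples $(q,r,s)$, where $(q,r)$ ranges over the term part of $\dM_3^l(G,\tau,\mu,\kappa\smallsetminus\xi)$ and $s$ ranges over $\dM_3^l(G,\tau,\mu,\kappa\smallsetminus\xi)$-names for elements of $\dL$. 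The order $R$ on $\dP\times\dQ$ is the obvious one: $(p',q',r',s')\, R\, (p,q,r,s)$ iff $(p',q',r')\leq(p,q,r)$ in $\dM_3^l(G,\tau,\mu,\kappa\smallsetminus\xi)$ and $(p',q',r')\Vdash s'\leq_{\dL} s$. Since $\dL$ is only ever reset or trivially extended when computing the base ordering, the base ordering $b(R)$ is again exactly $\dP=\Add(\tau,\kappa\smallsetminus\xi)$, which is $\tau^+$-Knaster by the preceding Lemma; consequently $(\dP,b(R))^2$ is $\tau^+$-Knaster, hence $\tau^+$-cc.

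The bulk of the verification is that $(\dP\times\dQ,R)$ is iteration-like and that its term ordering is ${<}\,\mu$-closed. Both proceed exactly as in the preceding Lemma, treating the new coordinate $s$ in complete analogy with the directed-closed coordinate $r$: the point is that $\dL$ is forced to be ${<}\,\kappa$-directed closed, hence in particular ${<}\,\mu$-closed, which is all that was used for $r$. For closure of the term ordering I would take a descending sequence $(p,q_\alpha,r_\alpha,s_\alpha)_{\alpha<\delta}$ with $\delta<\mu$ and fixed first coordinate $p$, first form the lower bound $(q,r)$ of the $\dM_3^l$-part as in the preceding Lemma, and then observe that $(p,q,r)$ forces $(s_\alpha)_{\alpha<\delta}$ to be a descending $<\mu$-sequence in $\dL$; by ${<}\,\mu$-closure of $\dL$ one fixes a name $s$ forced to be a lower bound, giving the desired bound $(p,q,r,s)$. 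For the mixing property I would choose $p_0,p_1\leq p$ splitting some $\Add(\tau)$-coordinate so that $p_0,p_1$ are incompatible, build $(q',r')$ as in the preceding Lemma, and then exploit that $(p_0,q',r')$ and $(p_1,q',r')$ are incompatible to define $s'$ as the name forced to equal $s_0$ below $(p_0,q',r')$ and $s_1$ off it; projection and refinement are verified the same way, the latter because stronger conditions force more.

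With these two facts in hand the proof concludes immediately: $(\dP,b(R))^2$ is $\tau^+$-cc, the term ordering is ${<}\,\mu$-closed and hence $\mu$-strategically closed, so (as $\tau^+\leq\mu$) it is $\tau^+$-strategically closed and therefore strongly ${<}\,\tau^+$-distributive. Theorem \ref{ApproxProp}, applied with $\delta=\tau^+$, then yields that $\dM_3^l(G,\tau,\mu,\kappa\smallsetminus\xi)*\dot{\dL}$ has the ${<}\,\tau^+$-approximation property. I expect the main obstacle to be purely bookkeeping: checking that placing the full $\dM_3^l$-name $\dot{\dL}$ into the term coordinate really leaves the base ordering unchanged, and that the mixing construction for $s'$ is compatible with the simultaneously performed mixing of $q'$ and $r'$, i.e. that the incompatibility of $(p_0,q',r')$ and $(p_1,q',r')$ used to split $s'$ is genuinely available at every relevant stage. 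None of this requires an idea beyond the preceding Lemma; it is the same argument with one extra directed-closed coordinate.
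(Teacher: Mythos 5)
Your proposal is correct and matches the paper's proof essentially verbatim: the paper uses the same decomposition $\dP=\Add(\tau,\kappa\smallsetminus\xi)$, $\dQ=\{(q,r,\sigma):\Vdash\sigma\in\dot{\dL}\}$, proves ${<}\,\mu$-closure of the term ordering by first bounding the $\dM_3^l$-part and then using the directed closure of $\dot{\dL}$ to bound the $\sigma$-part, and then invokes Theorem \ref{ApproxProp}. The only difference is that you spell out the iteration-likeness (mixing, projection, refinement), which the paper dismisses as ``easily seen,'' using the same construction as in the preceding Lemma.
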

	
	\begin{proof}
		We can view $\dM_3^l(G,\tau,\mu,\kappa\smallsetminus\xi)*\dot{\dL}$ as an ordering on a product $\dP\times\dQ$ by letting $\dP:=\Add(\tau,\kappa\smallsetminus\xi)$ and $\dQ$ consist of tripels $(q,r,\sigma)$ with $q,r$ as in $\dM_3^l(G,\tau,\mu,\kappa\smallsetminus\xi)$ and $\sigma$ such that $\Vdash\sigma\in\dot{\dL}$. This ordering is easily seen to be iteration-like. Moreover, the base ordering is $\tau^+$-Knaster (this is clear) and the term ordering is ${<}\,\mu$-closed: Let $(p,(q_{\alpha},r_{\alpha},\sigma_{\alpha}))_{\alpha<\delta}$ for $\delta<\mu$ be a descending sequence. Then the sequence $(p,(q_{\alpha},r_{\alpha}))_{\alpha<\delta}$ is a descending sequence in the term ordering on $\dM_3^l(G,\tau,\mu,\kappa\smallsetminus\xi)$. Therefore we can find a lower bound $(p,(q,r))$ of $(p,(q_{\alpha},r_{\alpha}))_{\alpha<\delta}$ in $\dM_3^l(G,\tau,\mu,\kappa\smallsetminus\xi)$. Then $(p,(q,r))$, being a lower bound, forces the sequence $(\sigma_{\alpha})_{\alpha<\delta}$ to be descending in the ${<}\,\kappa$-directed closed forcing $\dot{\dL}$. Hence by the maximal principle we can fix $\sigma$ which is forced to be a lower bound of $(\sigma_{\alpha})_{\alpha<\delta}$. In summary, $(p,(q,r,\sigma))$ is a lower bound of $(p,(q_{\alpha},r_{\alpha},\sigma_{\alpha}))_{\alpha<\delta}$.
		
		Now apply Theorem \ref{ApproxProp}.
	\end{proof}
	
	Now we can show that $\dM_3^l(\tau,\mu,\kappa)$ forces $\ISP$ in such a way that it is indestructible under ${<}\,\kappa$-directed closed forcing.
	
	\begin{mysen}
		Let $\tau<\mu<\kappa$ be regular cardinals such that $\tau^{<\tau}=\tau$ and $\kappa$ is supercompact. Let $l$ be a Laver function. Then $\dM_3^l(\tau,\mu,\kappa)$ forces $\ISP(\tau^+,\kappa,\geq\kappa)$ and it is indestructible under ${<}\,\kappa$-directed closed forcing.
	\end{mysen}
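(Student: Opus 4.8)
The plan is to prove the two assertions separately, both by feeding suitable $\lambda$-ineffability witnesses into (a version of) Theorem \ref{MainTheorem}. For the bare statement that $\dM_3^l(\tau,\mu,\kappa)$ forces $\ISP(\tau^+,\kappa,\lambda)$ for every $\lambda\geq\kappa$, I would argue exactly as for $\dM_0$ in Theorem \ref{M0ISP}. It suffices to treat cardinals $\lambda$ with $\lambda^{<\kappa}=\lambda$, since a $(\kappa,\lambda)$-list extends to a $(\kappa,\lambda')$-list and so $\ISP$ at larger $\lambda'$ descends to smaller $\lambda$; the cardinals with $\lambda^{<\kappa}=\lambda$ being cofinal, this yields $\ISP(\tau^+,\kappa,\geq\kappa)$. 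Fixing such a $\lambda$, note that $\dM_3^l(\tau,\mu,\kappa)$ has size $\kappa\leq\lambda$ and is $\kappa$-Knaster, and for any $(\kappa,\lambda)$-list $e$ choose a witness $M$ as in Lemma \ref{IneffEmbed}. Then $\pi(\dM_3^l(\tau,\mu,\kappa))=\dM_3^l(\tau,\mu,M\cap\kappa)$, the quotient is $\dM_3^l(G',\tau,\mu,\kappa\smallsetminus\xi)$ (with trivial tail), and this has the ${<}\,\tau^+$-approximation property by the Corollary preceding the theorem. Theorem \ref{MainTheorem} then applies and delivers $\ISP(\tau^+,\kappa,\lambda)$.

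For indestructibility, let $\dot\dL$ be a $\dM_3^l(\tau,\mu,\kappa)$-name for a ${<}\,\kappa$-directed closed forcing, let $G*L$ be generic, and let $\dot f$ name a $\tau^+$-slender $(\kappa,\lambda)$-list in $V[G][L]$. I cannot simply invoke Theorem \ref{MainTheorem} on $\dP:=\dM_3^l(\tau,\mu,\kappa)*\dot\dL$, since $\dP$ is in general neither $\kappa$-cc nor of size $\leq\lambda$. Instead I would re-run the proof of Theorem \ref{MainTheorem} verbatim, changing only the choice of witness. Choose $\lambda^*\geq\max(\lambda,|\dL|)$ with $(\lambda^*)^{<\kappa}=\lambda^*$, build in $V$ the ground-model list $e$ from $\dot f$ exactly as in that proof but relative to the two-step forcing $\dP$, and apply Lemma \ref{LaverIneffable} with $A\in H((\lambda^*)^+)$ a code for $(\dot\dL,\dot f,\dP)$. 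This produces an ineffability witness $N\prec H(\Theta)$ for $\kappa$ with respect to $e$ whose collapse $\pi$ satisfies $l(\pi(\kappa))=\pi(\dot\dL)$; since that witness is built from a supercompactness embedding, one simultaneously arranges, as in Lemma \ref{IneffEmbed}, that $\nu:=N\cap\kappa$ is inaccessible and $[N\cap\lambda]^{<\nu}\subseteq N$.

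The heart of the argument is the factorization at $\nu$. Writing $\xi:=\nu+1$, we have $\pi(\dP)=\dM_3^l(\tau,\mu,\nu)*\pi(\dot\dL)=\dM_3^l(\tau,\mu,\nu)*l(\nu)$, and by the very definition of $\dM_3^l$ the $r$-coordinate at level $\nu$ forces precisely with $l(\nu)$. Hence $\pi(\dP)$ is absorbed as a subordering of $\dM_3^l(\tau,\mu,\xi)$, so that $\pi[(G*L)\cap N]$ is $\pi(\dP)$-generic over $V$ and, crucially, already lies in $V[G\uhr\xi]\subseteq V[G]$. The dense-embedding lemma then exhibits $V[G][L]$ as an extension of $V[\pi[(G*L)\cap N]]$ by the tail $\dM_3^l(G\uhr\xi,\tau,\mu,\kappa\smallsetminus\xi)*\dot\dL_{\mathrm{tail}}$, which has the ${<}\,\tau^+$-approximation property by the Corollary preceding the theorem. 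With genericity, the closure $[N\cap\lambda]^{<\nu}\subseteq N$, and the approximation property established, the remaining bookkeeping — transferring $b_e$ to a branch $b_f$ for $\dot f^{G*L}$ through the G\"odel-pairing coding — is identical to the final part of the proof of Theorem \ref{MainTheorem}, and shows $N[G*L]$ to be a $\lambda$-ineffability witness for $\dot f^{G*L}$. Thus $\ISP(\tau^+,\kappa,\lambda)$ holds in $V[G][L]$, and since $\lambda$ was an arbitrary cardinal with $\lambda^{<\kappa}=\lambda$, we conclude $\ISP(\tau^+,\kappa,\geq\kappa)$ there.

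The hard part will be precisely this absorption step: verifying that the $l(\nu)$-generic extracted from $\pi[L\cap N]$ coincides with the generic already added by the $r(\nu)$-block of $G\uhr\xi$, so that no information about $L$ escapes $V[G]$ at the collapsed level. This is where the guessing design of $\dM_3^l$ and the correctness $l(\nu)=\pi(\dot\dL)$ must be combined with genericity. The two accompanying nuisances are the size reduction for $\dL$ via the auxiliary cardinal $\lambda^*$, and checking that a $\lambda^*$-ineffability witness still serves the $(\kappa,\lambda)$-list $e$ (which it does, since the witness property for $e$ only refers to $N\cap\lambda$ and the induced branch).
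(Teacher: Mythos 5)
Your indestructibility argument has the same overall architecture as the paper's proof (a Laver-guessed witness with $l(\nu)=\pi(\dot{\dL})$, factorization at $\xi=\nu+1$, the approximation Corollary for the tail, and the G\"odel-pairing branch transfer), but the step you defer as ``the hard part'' is precisely the step that requires the one genuinely new idea, and your claim there is false as stated. For an \emph{arbitrary} generic $G*L$ there is no reason why $\pi[(G*L)\cap N]$ should coincide with the $\dM_3^l(\tau,\mu,\xi)$-generic induced by $G$: the $\dM_3^l(\tau,\mu,\nu)$-part is controlled by the $\kappa$-cc via Lemma \ref{ClosureAfterForcing}, but the $l(\nu)$-filter extracted from $\pi[L\cap N]$ is computed from $L$, which is generic \emph{over} $V[G]$, and it need not reproduce (nor even be generic with respect to) the filter $H_0'$ read off from the $r(\nu)$-block of $G$. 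The paper resolves this with a master condition: by elementarity and the fact that $H_0'$ is a filter, $\pi^{-1}[H_0']\cup\{\sigma^G\}$ is a \emph{directed} subset of $\dot{\dL}^G$ of size ${<}\,\kappa$, so ${<}\,\kappa$-directed closure yields a lower bound $r$, and one thereby obtains a condition $q_M\leq(p,\sigma)$ \emph{forcing} the coincidence; the definition of the ground-model list $e$, the condition $q_a$, and the final witness verification must all be threaded below $q_M$ (the paper takes $q_a\leq q_M$). This is exactly where the directedness hypothesis on $\dot{\dL}$ enters the proof; ``genericity'' alone, which is what you offer, does not supply it.

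Your first part (the plain $\ISP(\tau^+,\kappa,\geq\kappa)$ via Theorem \ref{MainTheorem}) is both flawed and redundant. Choosing $M$ merely as in Lemma \ref{IneffEmbed} gives no control over $l(\nu)$, which may well be an $\dM_3^l(\tau,\mu,\nu)$-name for, say, $\Add(\nu,1)$. In that case $\pi(\dM_3^l(\tau,\mu,\kappa))=\dM_3^l(\tau,\mu,\nu)$, while the quotient of $\dM_3^l(\tau,\mu,\kappa)$ over $\dM_3^l(\tau,\mu,\nu)$ is $l(\nu)$ followed by the tail --- \emph{not} $\dM_3^l(G',\tau,\mu,\kappa\smallsetminus\xi)$, which is the quotient over $\dM_3^l(\tau,\mu,\xi)$ with $\xi=\nu+1$; you have conflated $\nu$ with $\xi$. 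Since $\Add(\nu,1)$ adds a fresh subset of $\nu$ (all of whose ${<}\,\tau^+$-sized approximations from $V[\pi[G\cap M]]$ lie in $V[\pi[G\cap M]]$ by ${<}\,\nu$-distributivity), the pair $(V[\pi[G\cap M]],V[G])$ then fails the ${<}\,\tau^+$-approximation property, so hypothesis (3)(c) of Theorem \ref{MainTheorem} can fail for such witnesses. The repair is to control $l(\nu)$ by Lemma \ref{LaverIneffable}, i.e.\ to treat the plain statement as the instance ``$\dot{\dL}=$ trivial forcing'' of the indestructibility argument --- which is exactly why the paper proves both claims in a single pass. Relatedly, in your second part you apply Lemma \ref{LaverIneffable} with $A$ a code for the triple $(\dot{\dL},\dot f,\dP)$ and then assert $l(\pi(\kappa))=\pi(\dot{\dL})$; the lemma gives $l(\pi(\kappa))=\pi(A)$, which is then not a name for a ${<}\,\nu$-directed closed order at all, so the $r(\nu)$-coordinate of $\dM_3^l$ would be inactive and the absorption would break. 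You must take $A=\dot{\dL}$ itself and place the remaining objects into the witness through the ``$x\subseteq M$'' requirement.
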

	
	\begin{proof}
		For any ordinal $\xi$, let $\dM(\xi):=\dM_3^l(\tau,\mu,\xi)$.
		
		It suffices to show that for any $\dM(\kappa)$-name $\dot{\dL}$ for a ${<}\,\kappa$-directed closed forcing, $\dM(\kappa)*\dot{\dL}$ forces $\ISP(\tau^+,\kappa,\geq\kappa)$. Additionally, it suffices to show that $\ISP(\tau^+,\kappa,\lambda)$ is forced for arbitrarily large $\lambda$.
		
		Thus let $\dot{\dL}$ be an $\dM(\kappa)$-name for a ${<}\,\kappa$-directed closed forcing, let $\lambda\geq|\dot{\dL}|$ and $\dot{f}$ an $\dM(\kappa)*\dot{\dL}$-name for a ${<}\,\tau^+$-slender $(\kappa,\lambda)$-list, forced by some $(p,\sigma)\in\dM(\kappa)*\dot{\dL}$. Let $\dot{F}$ be an $\dM(\kappa)*\dot{\dL}$-name for a function such that any $M\prec H(\Theta')$ closed under $\dot{F}$ witnesses the slenderness. We will view $\dM(\kappa)*\dot{\dL}$ as a partial order on $\lambda$. Let $\Theta$ be large and $M\prec H(\Theta)$ with the following:
		\begin{enumerate}
			\item $\nu:=M\cap\kappa\in\kappa$ is inaccessible
			\item $[M\cap\lambda]^{<\nu}\subseteq M$
			\item $\{\kappa,\lambda,\dM(\kappa),\dot{\dL},\dot{F},l\}\cup p\subseteq M$
			\item $l(\nu)=\pi(\dot{\dL})$.
		\end{enumerate}
		Then $p$ is actually a condition in $\dM(\nu)$ and $\pi(\sigma)$ is an $\dM(\nu)$-name for a condition in $\pi(\dot{\dL})$. Thus we can view $(p,\pi(\sigma))$ as a condition $q$ in $\dM(\nu+1)$. Let $G_0$ be an $\dM(\kappa)$-generic filter containing $q$ and work in $V[G_0]$.
		
		The collapse $\pi\colon M\to N$ extends to $M[G_0]\to N[\pi[G_0\cap M]]$ and, by the $\nu$-cc. of $\dM(\nu)$, $G_0':=\pi[G_0\cap M]$ is $\dM(\nu)$-generic over $V$. Let $H_0'$ be the $\pi(\dot{\dL})$-generic filter induced by $G$. Then $\pi^{-1}[H_0']\cup\{\sigma^G\}$ is (by elementarity) a ${<}\,\kappa$-sized collection of pairwise compatible conditions in $\dot{\dL}^G$. Ergo there exists a lower bound $r$. If now $H_0$ is $\dot{\dL}^G$-generic containing $r$, $\pi[H_0\cap M]=H_0'$ is $\pi(\dot{\dL})$-generic over $V[G_0']$. This shows that for any model $M$ as above, there exists a condition $q_M\leq (p,\sigma)$ forcing that for any $\dM(\kappa)*\dot{\dL}$-generic filter $G*H$, $\pi[G*H\cap M]$ is equal to the $\pi(\dM(\kappa)*\dot{\dL})$-generic filter induced by $G$.
		
		Now we transform, as before, $\dot{f}$ into a ground-model $(\kappa,\lambda)$-list. Let $a\in[\lambda]^{<\kappa}$.
		\begin{itemize}
			\item If there exists $M$ with the conditions listed above such that $a=M\cap\lambda$ and for some $q_a\leq q_M$ and a $\pi(\dM*\dot{\dL})$-name $\dot{x}_a$, $q_a\Vdash\dot{f}(\check{a})=\pi^{-1}[\dot{x}_a^{\pi[\Gamma\cap M]}]$, let
			$$g(a):=\{\langle\alpha,\beta\rangle\;|\;\alpha,\beta\in\pi[a]\wedge\alpha\Vdash\check{\beta}\in\dot{x}_a\}$$
			and
			$$e(a):=\pi^{-1}[g(a)]$$
			\item Otherwise, let $e(a):=\emptyset$.
		\end{itemize}
		
		By Lemma \ref{LaverIneffable}, there exists a $\lambda$-ineffability witness $M$ for $\kappa$ with respect to $e$ such that
		\begin{enumerate}
			\item $\nu:=M\cap\kappa\in\kappa$ is inaccessible
			\item $[M\cap\lambda]^{<\nu}\subseteq M$
			\item $\{\kappa,\lambda,\dM(\kappa),\dot{\dL},\dot{F},l\}\cup p\subseteq M$
			\item $l(\nu)=\pi(\dot{\dL})$.
		\end{enumerate}
		Denote $a:=M\cap\lambda$ and let $G_1*H_1$ be $\dM(\kappa)*\dot{\dL}$-generic containing $p_M$. We will use $G_1*H_1$ to show the existence of $p_a$. Because $G_1*H_1$ contains $p_M$, $G_1'*H_1':=\pi[G_1*H_1\cap M]$ is $\pi(\dM(\kappa)*\dot{\dL})$-generic over $V$. In particular, it is $\pi(\dM(\kappa)*\dot{\dL})$-generic over $N$ and the following holds:
		\begin{enumerate}
			\item $M[G_1*H_1]\cap V=M$,
			\item $\pi_{M[G_1*H_1]}\colon M[G_1*H_1]\to N[G_1'*H_1']$,
			\item $\pi_{M[G_1*H_1]}\uhr M=\pi$.
		\end{enumerate}
		For simplicity we write $\pi_{M[G_1*H_1]}=\pi$.
		
		Toward a contradiction, assume that $\pi[\dot{f}(\check{a})]\notin V[G_1'*H_1']$. As $V[G_1*H_1]$ is an extension of $V[G_1'*H_1']$ by a forcing with the ${<}\,\tau^+$-approximation property, there exists $z\in[\pi(\lambda)]^{<\tau^+}\cap V[G_1'*H_1']$ such that $z\cap\pi[\dot{f}^{G_1*H_1}(a)]\notin V[G_1'*H_1']$. Because $H_1'$ is generic for a ${<}\,\nu$-directed closed forcing, $z\in V[G_1']$ and by Lemma \ref{ClosureAfterForcing}, $z\in N[G_1']$, so $z=\pi(y)$ for $y\in M[G_1]\subseteq M[G_1*H_1]$. Furthermore, $M[G_1*H_1]\cap H(\Theta')$ is closed under $\dot{F}^{G_1*H_1}$, so it witnesses the ${<}\,\tau^+$-slenderness of $\dot{f}^{G_1*H_1}$. But then
		$$\pi[\dot{f}^{G_1*H_1}(a)]\cap z=\pi[\dot{f}^{G_1*H_1}(a)]\cap \pi(y)=\pi[\dot{f}^{G_1*H_1}(a)\cap y]=\pi(\dot{f}^{G_1*H_1}(a)\cap y)\in N[G_1'*H_1']$$
		a contradiction. So $\pi[\dot{f}^{G_1*H_1}(a)]\in V[G_1'*H_1']$ and there exists a condition $q_a$ as required.
		
		Lastly, we want to show that $q_a$ forces that $M[\Gamma]$ is a $\lambda$-ineffability witness for $\kappa$ with respect to $\dot{f}$. To this end, let $G_2*H_2$ be $\dM(\kappa)*\dot{\dL}$-generic containing $q_a$. By assumption we are in case (1) and there is $b_e\in M$ such that $b_e\cap M=e(M\cap\lambda)$. Define
		$$b_f:=\{\beta\;|\;\exists\alpha\in G_2*H_2\;\langle\alpha,\beta\rangle\in b_e\}\in M[G_2*H_2]$$
		we are left to show
		$$b_f\cap M[G_2*H_2]=b_f\cap M=\dot{f}^{G_2*H_2}(M\cap\lambda)=\dot{f}^{G_2*H_2}(M[G_2*H_2]\cap\lambda)$$
		where the last equality holds because $q_a\leq q_M$, which implies $M[G_2*H_2]\cap V=M$.
		
		Let $\beta\in b_f\cap M$. By elementarity there is $\alpha\in G_2*H_2\cap M$ such that $\langle \alpha,\beta\rangle\in b_e$ and we have $\langle\alpha,\beta\rangle\in b_e\cap M=e(a)=\pi^{-1}(g(a))$. So $\pi(\langle\alpha,\beta\rangle)=\langle\pi(\alpha),\pi(\beta)\rangle\in g(a)$. By the definition, $\pi(\alpha)\Vdash\pi(\check{\beta})\in\dot{x}_a$. Hence $\pi(\beta)\in\dot{x}_a^{\pi[G_2*H_2\cap M]}$ and thus $\beta\in\dot{f}^{G_2*H_2}(a)$.
		
		Let $\beta\in\dot{f}^{G_2*H_2}(a)$. Then $\pi(\beta)\in\dot{x}_a^{\pi[G_2*H_2\cap M]}$. Ergo there exists $\pi(\alpha)\in\pi[G_2*H_2\cap M]$ such that $\pi(\alpha)\Vdash\pi(\beta)\in\dot{x}_a$. Hence $\pi(\langle\alpha,\beta\rangle)\in g(a)$ and $\langle\alpha,\beta\rangle\in e(a)=b_e\cap M$. Thus $\beta\in b_f\cap M$.
	\end{proof}
	
	\section{Open Questions}
	
	We finish by stating two well-known questions which might have become more tractable with the new techniques used in this paper.
	
	\begin{myque}
		Is it consistent to have $\ISP(\omega_1,\omega_{n+2},\lambda_n)$ for every $n\in\omega$?
	\end{myque}

	A model of the former would most likely require $2^{\omega}=\aleph_{\omega+1}$. With a ``better`` powerset sequence, one could hope to find a model of the following:
	
	\begin{myque}
		Is it consistent to have $\ISP(\omega_{n+2},\omega_{n+2},\lambda_n)$ for every $n\in\omega$ together with $2^{\omega_n}=\omega_{n+2}$?
	\end{myque}

	We have shown a slight variation of the iterands of the Cummings-Foreman iteration (see \cite{CummingsForemanTreeProperty}) can force $\ISP(\tau^+,\kappa,\geq\kappa)$ to be indestructible under ${<}\,\kappa$-directed closed forcing (which is an important step in showing that the tree property holds at all $\aleph_{n+2}$), hopefully making progress toward an answer to the questions.

	\printbibliography

\end{document}